\documentclass[11pt,a4paper,reqno]{amsart}

\usepackage[T1]{fontenc}
\usepackage[utf8]{inputenc}
\usepackage[english]{babel}
\usepackage[margin=26mm]{geometry}
\usepackage{amsmath,amssymb,amsthm,mathtools,mathrsfs}
\usepackage{booktabs,array,tabularx,microtype}
\usepackage{hyphenat}
\usepackage[bookmarksopen=true,bookmarksnumbered=true]{hyperref}
\usepackage{fancyhdr}
\usepackage{tikz}

\allowdisplaybreaks
\newtheorem{theorem}{Theorem}[section]
\newtheorem{lemma}[theorem]{Lemma}
\newtheorem{proposition}[theorem]{Proposition}
\newtheorem{corollary}[theorem]{Corollary}
\theoremstyle{definition}\newtheorem{definition}[theorem]{Definition}
\newtheorem{openproblem}[theorem]{Open problem}
\theoremstyle{remark}

\newcommand{\R}{\mathbb R}
\newcommand{\BCC}{\mathrm{BCC}}
\newcommand{\FCC}{\mathrm{FCC}}
\newcommand{\KL}{D_{\mathrm{KL}}}
\newcommand{\trans}{\mathsf T}
\newcommand{\cP}{\mathcal P}
\newcommand{\cJ}{\mathcal J}
\newcommand{\Vor}{\operatorname{Vor}}
\newcommand{\Iso}{\operatorname{Iso}}
\newcommand{\adj}{\operatorname{adj}}
\newcommand{\tr}{\operatorname{tr}}
\newcommand{\covol}{\operatorname{covol}}

\newcommand{\doi}[1]{\href{https://doi.org/#1}{\nolinkurl{doi:#1}}}

\title{The truncated octahedron minimizes surface area \\
among parallelohedra of equal volume}
\author{Annalisa Cesaroni}
\address{Dipartimento di Matematica ``Tullio Levi-Civita'', Universit\`a di Padova, Via Trieste 63, 35131 Padova, Italy}
\email{annalisa.cesaroni@unipd.it}
\author{Matteo Novaga}
\address{Dipartimento di Matematica, Universit\`a di Pisa, Largo Bruno Pontecorvo 5, 56127 Pisa, Italy}
\email{matteo.novaga@unipi.it}

\subjclass[2020]{52C07, 52A40, 52B20, 11H06}
\keywords{parallelohedra, truncated octahedron, Voronoi cell, zonotope, Selling parameters, Ball--Barthe inequality}
\hypersetup{
  pdftitle={The truncated octahedron minimizes surface area among three-dimensional parallelohedra},
  pdfauthor={Annalisa Cesaroni and Matteo Novaga},
  pdfsubject={Isoperimetric inequalities for three-dimensional lattice Voronoi cells and parallelohedra},
  pdfkeywords={parallelohedron, truncated octahedron, lattice Voronoi cell, zonotope, BCC lattice, Selling parameters, Ball--Barthe inequality, determinantal process}
}

\begin{document}
\begin{abstract}
We prove that the regular truncated octahedron uniquely minimizes surface area
among all parallelohedra of fixed volume.  Equivalently,
every three-dimensional parallelohedron $P$ satisfies
\[
 \frac{\mathcal H^2(\partial P)}{|P|^{2/3}}
 \ge \frac{3(1+2\sqrt3)}{4^{2/3}},
\]
with equality if and only if $P$ is similar to the regular truncated
octahedron.  Among the non-truncated Fedorov types we prove a 
stronger sharp bound, attained uniquely by the regular rhombic
dodecahedron.
\end{abstract}

\maketitle
\tableofcontents

\section{Introduction}

A three-dimensional parallelohedron is a convex polytope whose translates tile
$\mathbb R^3$ face-to-face.  In this paper we study the isoperimetric problem
within this class: given a parallelohedron $P$, we consider the scale-invariant
quantity
\begin{equation}\label{eq:isoperimetric-quotient}
 \Iso(P)=\frac{\mathcal H^2(\partial P)}{|P|^{2/3}},
\end{equation}
and ask which shape minimizes it.

The natural candidate is the regular truncated octahedron.  The corresponding
statement is usually referred to as the \emph{Truncated Octahedron Conjecture}
and is attributed to Bezdek; see \cite[Conjecture~1]{Langi2022}.  The conjecture
asserts that, among all three-dimensional parallelohedra of fixed volume, the
regular truncated octahedron has the smallest surface area.

This problem may be viewed as a lattice analogue of the classical
isoperimetric problem, and is also closely related to the Kelvin problem for
equal-volume partitions of space.  Other optimization problems on lattices,
such as sphere packing and quantization, likewise single out highly symmetric
lattices.  There is, however, no direct implication between these different
variational problems; see
\cite{BarnesSloane1983,CesaroniNovaga2024,CesaroniNovaga2025, gallagher}, for related
examples.

One of the main difficulties is the interaction between affine and Euclidean
geometry.  Three-dimensional parallelohedra admit a well-understood affine
classification, whereas the isoperimetric quotient \eqref{eq:isoperimetric-quotient}
is invariant under similarities but not under general affine transformations.
Thus an affine description of the possible combinatorial types does not by
itself identify the Euclidean minimizer.

Two particularly important examples are the Voronoi cells of the
body-centered cubic lattice ($\BCC$) and face-centered cubic lattice ($\FCC$).  The Voronoi cell of BCC
is the regular truncated octahedron, while that of FCC is the regular rhombic
dodecahedron.  Their isoperimetric quotients are
\[
 \Iso_\BCC
 =\frac{3(1+2\sqrt3)}{4^{2/3}}
 \approx 5.314739700,
 \qquad
 \Iso_\FCC
 =3\,2^{5/6}
 \approx 5.345392309.
\]
The   small gap between these two values shows how sharp is the 
 problem.

Our main result proves the Truncated Octahedron Conjecture and also identifies
the sharp minimizer among all the remaining Fedorov types.

\begin{theorem}[Truncated Octahedron Conjecture]
\label{thm:main}
For every three-dimensional parallelohedron $P$, one has
\begin{equation}\label{eq:main-full}
 \Iso(P)\ge
 \Iso_{\BCC}
 =\frac{3(1+2\sqrt3)}{4^{2/3}},
\end{equation}
with equality if and only if $P$ is similar to the regular truncated
octahedron.  In particular, for every three-dimensional lattice $\Lambda$,
\[
 \Iso(\Vor(\Lambda))\ge \Iso_{\BCC},
\]
with equality precisely for BCC, up to similarity.

If $P$ is not of truncated-octahedral Fedorov type, then the stronger estimate
\begin{equation}\label{eq:main-boundary}
 \Iso(P)\ge \Iso_{\FCC}=3\,2^{5/6}
\end{equation}
holds, with equality if and only if $P$ is similar to the regular rhombic
dodecahedron.  In particular, among lattice Voronoi cells with at most twelve
facets, equality occurs precisely at FCC.
\end{theorem}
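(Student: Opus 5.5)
Every three-dimensional parallelohedron is a zonotope generated by at most six segments, and there is a face-to-face lattice tiling by its translates. I will reduce to zonotopes in Selling form. By Fedorov's classification, up to affine equivalence $P$ is a cube, hexagonal prism, rhombic dodecahedron, elongated dodecahedron or truncated octahedron. In the generic (truncated octahedron) case one can write
\[
P=\sum_{1\le i<j\le 4}[0,w_{ij}],
\]
where the $w_{ij}$ arise from an obtuse superbase $v_1,\dots,v_4$ ($\sum v_i=0$) of a reference lattice. The other four types are degenerations in which some of the six zones have length zero.

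Step one is explicit formulas for both quantities. For a zonotope $Z=\sum_k[0,u_k]$ in $\R^3$:
\begin{itemize}
\item $|Z|=\sum_{k<l<m}|\det(u_k,u_l,u_m)|$;
\item $\mathcal H^2(\partial Z)=2\sum_{k<l}|u_k\times u_l|$.
\end{itemize}
Each pair of zones contributes a pair of parallelogram faces. For a tiling zonotope, the combinatorial type fixes which triples and pairs actually occur. The isoperimetric problem therefore becomes a finite-dimensional problem: minimize $\sum_{k<l}|u_k\times u_l|$ subject to $\sum|\det|=1$, over six vectors with the prescribed incidence structure.

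Step two reduces the degrees of freedom. The quotient is homogeneous and rotation invariant. For the truncated octahedron type, the six generators are determined by four vectors $a_1,\dots,a_4$ with $\sum a_i=0$, up to the lengths $t_{ij}\ge0$ of the six zones. Every volume term is then a product of three zone lengths times a $3\times3$ determinant built from the $a_i$. I will apply the Ball--Barthe (Brascamp--Lieb type) inequality, listed in the paper's keywords, to the family of directions. It gives a lower bound for the volume-normalised product of areas in terms of a determinant. Equality in Ball--Barthe forces the directions to be in "isotropic position", which for four directions summing to zero means the regular tetrahedral configuration. After that normalisation, the surface-to-volume ratio is a symmetric function of the six weights $t_{ij}$. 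I will then minimise it by a convexity or AM--GM argument: the area is linear in products $t_{ij}t_{kl}$, and the volume is a cubic elementary-symmetric-like polynomial (the "spanning tree" determinant of the weighted $K_4$, i.e.\ a Kirchhoff polynomial). The target is that the minimum occurs at all $t_{ij}$ equal, which gives BCC.

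Step three handles the degenerate types, where some $t_{ij}=0$. These lie on the boundary of the parameter region, and the same inequality chain restricted to the face gives the bound \eqref{eq:main-boundary}. The rhombic dodecahedron corresponds to one vanishing zone and is the extremal configuration there. The hexagonal prism, elongated dodecahedron and cube are further degenerations, and I will check directly that their optimal quotients exceed $\Iso_\FCC$: the optimal hexagonal prism and the cube ($6$) both exceed it. Uniqueness comes from tracing the equality cases of Ball--Barthe and of the AM--GM step.

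The main obstacle is step two. Decoupling the Euclidean shape of the directions from the weights is not clean, because the areas $|u_k\times u_l|$ depend jointly on the angles and the lengths. The inequality needed is sharp only to within about $0.6\%$ of FCC, so crude estimates will fail. I would first apply the Ball--Barthe inequality with weights proportional to the face areas, so that the optimal weights adapt to $P$. If that does not close the gap, I will fall back on a Lagrange-multiplier analysis of the explicit six-parameter function near the symmetric point, combined with compactness and a boundary analysis.
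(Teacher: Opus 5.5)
Your setup (the weighted-$K_4$ graphical zonotope, with the Kirchhoff polynomial as volume) matches the paper's. However, the argument has a genuine gap exactly at the step you flag.

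\textbf{The interior case.} In Step two you use the \emph{equality case} of Ball--Barthe to fix the directions at the regular tetrahedral configuration, and then minimise over the six weights only. That computes $\min_t\Iso$ for one fixed metric. This is an upper bound for the infimum, not a lower bound. What you need is, for each weight vector $\omega$, a bound on $\inf_Q\Iso(\omega,Q)$ by a function of $\omega$ alone. The optimal metric for a given $\omega$ is not the tetrahedral one.

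The paper proves $(\Iso(\omega,Q)/6)^3\ge\cJ(\omega)=\det M(\omega)/D(\omega)^{3/2}$, i.e.\ it compares with the $\omega$-dependent Voronoi metric $D^{-1/3}A(\omega)$. To get this:
\begin{enumerate}
\item Ball--Barthe is applied to the seven facet normals $z_i$, put in isotropic position by $A_p^{-1/2}$, with the leverage scores $\ell_i(p)$ of $p_i=q_i/\sqrt{c_i}$ as weights. The weights are not free: isotropic position is a hypothesis of the inequality, not its equality case. Here the weights depend on $\omega$ only.
\item The resulting bound is matched with $\cJ$ only through an additional entropy-contraction lemma for the associated determinantal measures, $\KL(\tau_p\Vert\tau_q)\ge\sum_i\ell_i(p)\log(\ell_i(p)/\ell_i(q))$.
\end{enumerate}

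After that, the sharp inequality $\cJ(\omega)\ge\cJ_{\BCC}$ is the real core, and no convexity or single AM--GM step gives it. The paper proceeds as follows:
\begin{enumerate}
\item It writes $\cJ=\sum_I\tau_q(I)^{3/2}/\sqrt{a_I}$ over the $29$ bases.
\item It groups bases by the number of $2+2$ cuts via H\"older.
\item It closes with the leverage-product bound $\ell_4\ell_5\ell_6\le(L-2)^2/4$, the weak correlation estimate for $J_2$, and a three-range scalar optimisation.
\end{enumerate}
Your fallback (Lagrange multipliers near the symmetric point plus compactness) gives at most local minimality, and the parameter domain is non-compact.

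\textbf{The boundary case.} This step is also flawed.
\begin{enumerate}
\item The rhombic dodecahedron has four zones, i.e.\ \emph{two} vanishing weights. A single vanishing weight gives the five-zone elongated dodecahedron, which is exactly the stratum not covered by Jo\'os--L\'angi.
\item Restricting the interior chain to a face can only reproduce the weaker constant $\Iso_{\BCC}$, never $3\,2^{5/6}$. The paper needs a separate key lemma here: the four-point inequality $2(\det\Gamma(d))^2\ge\det\Gamma(d^{\circ2})$ on $\mathrm{MET}_4$. It is combined with $N_Q=\tr(\adj\mathsf G_\omega\,\mathsf H_Q)$ and with the normalisation $\det\Gamma_\ast(R)=1$, which comes from $\det Q=1$.
\item Checking the hexagonal prism ``directly'' would have to cover all its affine images (oblique prisms over non-regular hexagons), not just the right regular prism. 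This is what Jo\'os--L\'angi's theorem on three- and four-generator zonotopes supplies.
\end{enumerate}
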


Theorem~\ref{thm:main} does not address arbitrary periodic equal-volume
partitions, and hence does not solve the Kelvin problem.  It does, however,
settle the corresponding problem for convex lattice-translative tiles.  Indeed,
if a convex body $P\subset\mathbb R^3$ tiles space by translations along a
lattice, then $P$ is necessarily a parallelohedron; see, for instance,
\cite{McMullen1980}.

Let us briefly describe the main ideas of the proof.  We first consider lattice
Voronoi cells.  A classical result of Selling implies that every
three-dimensional lattice admits an obtuse superbase.  This gives a convenient
six-parameter description of its Voronoi cell and, through the geometry of
zonotopes, an explicit formula for its surface area.  The same description can
be encoded by the complete graph $K_4$, a representation that makes both the
volume and the facet structure particularly transparent.

The proof then separates naturally into the interior and the boundary of the
parameter space.  On the boundary we prove the sharp FCC inequality.  The
strata with few active generators are covered by the isoperimetric inequality
of Jo\'os and L\'angi \cite{JoosLangi2023}, while the remaining case is reduced
to a four-point determinant inequality proved here.

The interior case contains the BCC lattice and is the main part of the
argument.  The determinant expansion associated with the $K_4$ representation
defines a finite determinantal probability measure.  Its one-point marginals,
or leverage scores, allow us to reduce the original geometric inequality to a
small number of scalar constraints.  An entropy inequality, ultimately based
on the rank-one Ball--Barthe inequality \cite{Barthe1998}, provides the key
estimate.  The resulting scalar inequality is sharp, and equality forces all
six Selling parameters to coincide, which gives the BCC lattice.

The lattice result alone is not sufficient for the full conjecture, since an
arbitrary parallelohedron need not be a Voronoi cell in its given Euclidean
metric.  In the final step we therefore keep the same zonotopal description
but allow the ambient Euclidean metric to vary.  A determinant inequality
controls these additional affine degrees of freedom and shows that no affine
deformation can improve the BCC constant.  Combining this with the boundary
analysis yields Theorem~\ref{thm:main} for all three-dimensional
parallelohedra.

The approach builds on classical reduction theory.  Selling's reduction of
ternary quadratic forms \cite{Selling1874}, later incorporated into Voronoi's
general reduction theory \cite{Voronoi1908,Voronoi1909}, implies that every
three-dimensional lattice is of Voronoi's first kind.  Conway and Sloane gave
a modern formulation in terms of conorms and vonorms and recovered in this way
the five Fedorov types of three-dimensional lattice Voronoi cells
\cite{ConwaySloane1992article,ConwaySloane1999}.  We also use the classical
determinant formula for zonotope volumes \cite{Shephard1974}.  L\'angi proved
that the regular truncated octahedron minimizes mean width among
three-dimensional parallelohedra \cite{Langi2022}.

In our earlier work \cite{CesaroniNovaga2026}, we derived a closed formula for
the isoperimetric quotient in Selling coordinates, proved the strict local
 minimality of $\BCC$, and showed that $\FCC$ and the   cubic lattice $\text{SC}$ are not
local minimizers. This result has been recenlty improved by Lark Song, \cite{larksong} showing that the  truncated octahedron is a strict local minimizer of the isoperimetric quotient among all parallelohedra. 
The present paper establishes the corresponding global
result. 

The paper is organized as follows.  Section~3 introduces Selling coordinates,
the weighted $K_4$ zonotope, and the surface-area formula.  Section~4 proves
the sharp FCC boundary inequality, including the four-point determinant
inequality.  Section~5 treats the positive six-generator case and proves the
global BCC inequality for lattice Voronoi cells.  Section~6 extends the
argument to arbitrary three-dimensional parallelohedra and completes the proof
of the Truncated Octahedron Conjecture.  The final section discusses some open
problems.

We point out that ChatGPT (GPT-5.6 Sol) was used during the preparation of this work to double check algebraic identities and to assist in exploring proof arguments.

\smallskip

After this work was completed, we became aware of an independent proof of the Truncated Octahedron Conjecture by Thomas Hales and Lark Song \cite{HS26}.

\section{Notation}
  For $y=(y_1,\dots, y_d)\in\R^d$, the symbol $e_k(y)$, with $1\leq k\leq d$ 
denotes its $k$th elementary symmetric function, that is the sum of  all  products of $k$ distinct coordinates $y_i$ of $y$.   Let $\Omega$ be a finite probability space, given  $p$ and $q$ probability measures on $\Omega$, with the support of $p$ contained in the support of $q$, we denote  the relative entropy of $p$ with respect to $q$ (or  Kullback-Leibler divergence) as follows: 
\begin{equation}\label{KLdef}
 \KL(p\Vert q)=\sum_{\omega\in\Omega} p(\omega)
 \log\frac{p(\omega)}{q(\omega)},
\end{equation}
with the  convention $0\log(0)=0$. 

The Gram matrix $G$ associated with vectors $v_1,\ldots,v_m$ is the symmetric matrix with entries $g_{ij}=v_i\cdot v_j$.
The notation
$d^{\circ2}$ means entrywise squaring of a matrix $d$, that is, squaring each entry of the matrix, and
$\covol(\Lambda)$ is the Euclidean covolume of a lattice, that is, the volume of its fundamental parallelotope.  
We also let $K_4$ be the complete graph on four vertices.  A spanning tree
of a graph is a connected, acyclic subgraph containing all its vertices.
The group $S_4$
acts on Selling coordinates by relabelling the four vertices of the
superbase.

\section{The surface-area formula}\label{sec:geometry}
In this section we introduce and recall the main results about  lattice-Voronoi cells, and   we derive the exact surface-area formula.
The  Voronoi cell associated to a lattice $\Lambda$ in $\R^n$, denoted $\Vor(\Lambda)$,  is    an $n$-dimensional
convex polytope symmetric about the origin and  defined as 
\[\Vor(\Lambda):=\{x\in \R^n: |x|\le |x-\lambda|, \quad \forall \lambda\in\Lambda, \lambda \neq 0\}.\]
Equivalently $\Vor(\Lambda)$ can be defined as the intersection of the half spaces 
\[H_\lambda = \{x\in\R^n: 2 x\cdot \lambda\le |\lambda|^2\} \] for all $\lambda\in \Lambda\setminus \{0\}$.

\subsection{Selling-Voronoi reduction}

Lattices of Voronoi's first kind in $\R^n$   are  full-rank lattices which admit an obtuse
superbase, that  is $n+1$ vectors $v_0, \dots, v_n$ such that $v_1,\dots v_n$ is a lattice basis, $v_0+v_1+\dots +v_n=0$ and $v_i\cdot v_j\leq 0$ for $i\neq j$.  In dimension
three every lattice is of Voronoi's first kind, see the following result. 

\begin{proposition}
\label{thm:selling-reduction}
Every full-rank lattice $\Lambda\subset\mathbb R^3$ admits an obtuse
superbase $v_0,v_1,v_2,v_3$. The Selling parameters are defined as 
\[
 \rho_{ij}:=-v_i\cdot v_j\ge0\qquad(i\ne j).
\]   
Conversely, any six nonnegative numbers $\rho_{ij}$ whose associated Gram matrix 
is positive definite determine a  lattice in $\R^3$, up to orthogonal
isometry.  

\end{proposition}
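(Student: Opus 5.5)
The existence part is Selling's reduction. I will run it as a descent on the sum of squared lengths of a superbase.

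Start from any basis $v_1,v_2,v_3$ of $\Lambda$ and set $v_0=-(v_1+v_2+v_3)$, so $(v_0,\dots,v_3)$ is a superbase. To this superbase I attach the functional
\[
\Phi=\sum_{i=0}^3|v_i|^2 .
\]
Suppose some pair, say $v_0\cdot v_1>0$, violates obtuseness. Then I perform the Selling move
\[
(v_0,v_1,v_2,v_3)\mapsto(-v_0,\;v_1,\;v_2+v_0,\;v_3+v_0).
\]
The new vectors still sum to zero, since $-v_0+v_1+v_2+v_3+2v_0=0$. The vectors $v_1,\;v_2+v_0,\;v_3+v_0$ form a basis because they are unimodular combinations of the old superbase. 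A direct computation gives $|v_2+v_0|^2+|v_3+v_0|^2=|v_2|^2+|v_3|^2+2|v_0|^2+2v_0\cdot(v_2+v_3)$. Since $v_2+v_3=-v_0-v_1$, this equals $|v_2|^2+|v_3|^2-2v_0\cdot v_1$. Hence $\Phi$ decreases by exactly $4\,v_0\cdot v_1>0$.

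Termination is the main obstacle, and I handle it by discreteness. Only finitely many lattice vectors have norm at most the initial $\Phi$, so only finitely many superbases have $\Phi$ below that value. Because $\Phi$ strictly decreases at each move, the procedure stops after finitely many steps. At that point every pairwise product satisfies $v_i\cdot v_j\le0$, so the superbase is obtuse. Equivalently, a superbase minimizing $\Phi$ (which exists by the same finiteness) is obtuse. Once this holds, the definition $\rho_{ij}=-v_i\cdot v_j\ge0$ is immediate.

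For the converse, take six numbers $\rho_{ij}\ge0$. I define the symmetric $3\times3$ matrix $G$ indexed by $1,2,3$ with off-diagonal entries $g_{ij}=-\rho_{ij}$ and diagonal entries $g_{ii}=\sum_{j\ne i,\,j\in\{0,\dots,3\}}\rho_{ij}$. These diagonal entries are forced by the relation $v_i\cdot v_0=-\rho_{0i}$ together with $v_0=-\sum v_j$. This $G$ is the associated Gram matrix. If $G$ is positive definite, I factor $G=B^{\trans}B$ using a Cholesky decomposition or the square root, and take the columns $v_1,v_2,v_3$ of $B$, with $v_0=-\sum v_i$. These columns generate a full-rank lattice, and by construction its superbase has the prescribed Selling parameters. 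Uniqueness up to orthogonal isometry follows because two factorizations $B^{\trans}B=B'^{\trans}B'$ with $B,B'$ invertible satisfy $B'=QB$ with $Q=B'B^{-1}$ orthogonal.
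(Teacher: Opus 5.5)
Your proof is correct. For existence it takes a different route from the paper, because the paper gives no argument there. It simply cites Selling, Voronoi and Delone for the obtuse superbase. It then treats the converse tersely: it writes down the Gram matrix $A(\rho)$ and remarks that positive definiteness lets one reconstruct the lattice.

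You instead supply the classical Selling descent. You apply the move $(v_0,v_1,v_2,v_3)\mapsto(-v_0,v_1,v_2+v_0,v_3+v_0)$ whenever some $v_i\cdot v_j>0$, and you use the finiteness of superbases with bounded $\Phi$. This makes the statement self-contained, and even algorithmic: it shows that any $\Phi$-minimizing superbase is obtuse.

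For the converse you follow the same Gram-matrix reconstruction as the paper. You also make two steps explicit that the paper only asserts:
\begin{itemize}
\item the check $v_0\cdot v_i=-\rho_{0i}$;
\item uniqueness up to $O(3)$, via $B'=QB$ with $Q=B'B^{-1}$ orthogonal.
\end{itemize}
The citation buys brevity and situates the result inside Voronoi's general theory of lattices of the first kind. Your argument buys a proof the reader can verify on the spot.

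There is one arithmetic slip. Your own computation gives $|v_2+v_0|^2+|v_3+v_0|^2=|v_2|^2+|v_3|^2-2\,v_0\cdot v_1$, and $|-v_0|^2$ and $|v_1|^2$ are unchanged. So $\Phi$ drops by $2\,v_0\cdot v_1$, not by $4\,v_0\cdot v_1$; the factor $4$ is the drop in the sum of all seven vonorms. For example, take $v_1=(1,0,0)$, $v_2=(0,1,0)$, $v_3=(-2,0,1)$. Then $v_0\cdot v_1=1$, and $\Phi$ goes from $10$ to $8$. Since you only use that the drop is strictly positive, the proof is unaffected.

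Two small points are worth stating explicitly:
\begin{itemize}
\item The new basis has change-of-basis determinant $-1$, because $v_2+v_0=-v_1-v_3$ and $v_3+v_0=-v_1-v_2$.
\item Treating only the pair $(0,1)$ loses no generality: any three vectors of a superbase form a basis, so the four indices play symmetric roles.
\end{itemize}
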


\begin{proof}
The existence statement is Selling's reduction theorem,
also known as Voronoi's theorem (see \cite{Selling1874,Voronoi1908,Delone1937}).
For the definition of the superbase and of Selling parameters $\rho_{ij} $
see   \cite[Sections~2 and~7]{ConwaySloane1992article}.  If
$v_0=-v_1-v_2-v_3$, then
\[ \qquad v_i\cdot v_j=-\rho_{ij},\qquad
 v_i\cdot v_i=\rho_{0i}+\sum_{j\in\{1,2,3\}\setminus\{i\}}\rho_{ij}
\]
which gives the Gram matrix $A=(v_{i}\cdot v_j)_{1\leq i, j\leq 3}$ defined below.  From positive definiteness  of the matrix, one reconstructs the
lattice.   
\end{proof}

Let $K_4$ be the complete graph with vertex set $\{0,1,2,3\}$.
We may  identify the vertices with  the four
vectors $v_0,v_1,v_2,v_3$ of the superbase and  the Selling
parameter $\rho_{ij}$ with the weight of the edge $ij$ of $K_4$.  
Thus every full-rank lattice  is associated with a  nonnegative edge-weighted realization  of $K_4$.
We use throughout the fixed edge order
$
 (01,02,03,12,13,23)
$
and write
\[
\rho=(a,b,c,d,e,f)
   =(\rho_{01},\rho_{02},\rho_{03},
      \rho_{12},\rho_{13},\rho_{23})
   \in\mathbb R_{\ge0}^6.
\]
The associated   Gram matrix can be written as
\[
 A(\rho) =
 \begin{pmatrix}
 a+d+e&-d&-e\\
 -d&b+d+f&-f\\
 -e&-f&c+e+f
 \end{pmatrix},
 \qquad D(\rho)=\det A(\rho).
\]

A point $\rho$ of the Selling cone may be regarded as a nonnegative
weighting of the six edges of $K_4$, as shown in
Figure~\ref{fig:weighted-K4}.  Passing to the boundary amounts to
setting at least one of these edge weights equal to zero; the support
graph of the remaining positive edges will therefore provide a
convenient combinatorial description of the boundary strata considered
below.

\begin{figure}[t]
\centering
\begin{tikzpicture}[scale=1.05,
    vertex/.style={circle,draw,fill=white,inner sep=1.6pt,
                   minimum size=6mm,font=\small}]

  \node[vertex] (0) at (0,1.05) {$0$};
  \node[vertex] (1) at (-2,0) {$1$};
  \node[vertex] (2) at (2,0) {$2$};
  \node[vertex] (3) at (0,3.1) {$3$};

  \draw (0)--node[below left] {$a$} (1);
  \draw (0)--node[below right] {$b$} (2);
  \draw (0)--node[right] {$c$} (3);

  \draw (1)--node[below] {$d$} (2);
  \draw (1)--node[above left] {$e$} (3);
  \draw (2)--node[above right] {$f$} (3);

\end{tikzpicture}
\caption{The weighted graph $K_4$ associated with the Selling vector
$\rho=(a,b,c,d,e,f)
=(\rho_{01},\rho_{02},\rho_{03},\rho_{12},\rho_{13},\rho_{23})$.}
\label{fig:weighted-K4}
\end{figure}

A reference realization of this abstract graph $K_4$ in $\R^3$ is the tetrahedron with vertices
\[
p_0=0,\qquad p_1=e_1,\qquad p_2=e_2,\qquad p_3=e_3.
\]
The corresponding edge vectors are
\[
 r_a=e_1,\quad r_b=e_2,\quad r_c=e_3,\quad
 r_d=e_1-e_2,\quad r_e=e_1-e_3,\quad r_f=e_2-e_3.
\]
Equivalently, the Gram matrix can be written as
\[A(\rho)= \sum_\xi  \xi r_\xi r_\xi^{\mathsf T}.\]

For a nontrivial subset $S\subset\{0,1,2,3\}$, let
\[
 c(S)=\sum_{ij\in\delta(S)}\rho_{ij}
\]
be the total weight of the cut $S\mid S^c$, where $\delta(S)$ denotes the
set of edges with one endpoint in $S$ and the other in $S^c$.  Modulo
complementation there are seven nontrivial cuts.  Their weights are:
\begin{align}
\notag
 c_0&=a+b+c,& c_1&=a+d+e,& c_2&=b+d+f,& c_3&=c+e+f,\\
 c_4&=b+c+d+e,& c_5&=a+c+d+f,& c_6&=a+b+e+f.\label{cutw}
\end{align}
Here $c_i=c(\{i\})$ for $i=0,1,2,3$, while $c_4,c_5,c_6$
correspond respectively to the cuts
$\{0,1\}\mid\{2,3\}$, $\{0,2\}\mid\{1,3\}$, and
$\{0,3\}\mid\{1,2\}$. 
For every cut $S\mid S^c$, we consider  the  spanning trees of the induced subgraphs $K_4[S]$ and $K_4[S^c]$, and we compute the  weighted spanning-tree polynomial  of $S\mid S^c$, i.e. the sum of the products of edge weights over all spanning trees.
So the following seven quadratic polynomials are exactly  the weighted spanning-tree polynomials associated to the previous seven cuts: 
\begin{align} \notag
 q_0&=de+df+ef,&q_1 &=bc+bf+cf,&q_2 &=ac+ae+ce,  & q_3 &=ab+ad+bd,
 \\ q_4& =af,&q_5 &=be,&q_6 &=cd.& \label{qdef}
\end{align}
The Gram determinant is
\begin{multline}
    \label{eq:D}
 D(\rho)=\det A(\rho) =abc+abe+abf+acd+acf+ade+adf+aef+
   \\ +bcd+bce+bde+bdf+bef+cde+cdf+cef.
\end{multline}
 The polynomial $D(\rho)$ is the weighted spanning-tree polynomial of $K_4$, that is the   sum of the products of edge weights over all spanning trees of $K_4$.
 
The weighted matrix--tree theorem
(see, for instance, \cite[Eq.~(1)]{KleeStamps2019})
gives the following identity:
\begin{equation}\label{eq:tree}
 \sum_{i=0}^6 q_i c_i=3D(\rho).
\end{equation}
Indeed, for each nontrivial cut $S\mid S^c$ of $K_4$, modulo
complementation, $q_i$ is the weighted spanning-tree polynomial of the
two induced subgraphs $K_4[S]$ and $K_4[S^c]$, while $c_i$ is the total
weight of the edges crossing the cut.  Hence $q_i c_i$ is the total
weight of spanning trees $T$ for which $S\mid S^c$ is obtained by
deleting one edge of $T$.  Conversely, deleting any edge of a spanning
tree produces exactly one such nontrivial cut.  Since every spanning
tree of $K_4$ has three edges, summing over the seven cuts counts every
weighted spanning tree exactly three times, which proves
\eqref{eq:tree}.

\subsection{Zonotopal representation of the Voronoi cells}

Using the geometric representation $r_\xi$ of the edges of the realization of $K_4$,   we define the centered graphical zonotope
\begin{equation}\label{eq:graphical-zonotope}
 Z_\rho=\sum_{\xi\in\{a,b,c,d,e,f\}}
 \left[-\frac{ \xi r_\xi}{2},\frac{ \xi r_\xi}{2}\right].
\end{equation}
Lattices of Voronoi's first kind are zonotopal lattices, that is, their
Voronoi cells are zonotopes; see, for instance,
\cite{McCormickPeisScheidweilerVallentin2021}.  In the present Selling
coordinates this structure takes the following explicit graphical form.

\begin{proposition}
\label{prop:voronoi-zonotope}
Let $\Lambda$ be a lattice in $\R^3$ with associated Selling parameters $\rho_{ij}$. If $D(\rho)>0$, then, up to an orthogonal change of coordinates, the Voronoi cell associated to $\Lambda$ is given by 
\begin{equation}\label{eq:voronoi-zonotope}
 \Vor(\Lambda)=A(\rho)^{-1/2}Z_\rho.
\end{equation}
Moreover $|Z_\rho|=D(\rho)$.
\end{proposition}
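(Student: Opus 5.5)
We work in the coordinates in which the lattice is $\Lambda=M\mathbb Z^3$, where $M$ is any matrix whose columns $v_1,v_2,v_3$ satisfy $M^{\trans}M=A(\rho)$. By polar decomposition $M=U A(\rho)^{1/2}$ with $U$ orthogonal. After the orthogonal change of coordinates $U^{\trans}$ we may therefore take $v_i=A^{1/2}e_i$, and then $v_0=-A^{1/2}(e_1+e_2+e_3)$. So it suffices to prove
\[
\Vor(\Lambda)=A^{-1/2}Z_\rho\qquad\text{for }\Lambda=A^{1/2}\mathbb Z^3,
\]
or equivalently $A^{1/2}\Vor(\Lambda)=AA^{-1/2}\Vor(\Lambda)$. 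The cleanest route is through the dual formulation. We have $x\in\Vor(\Lambda)$ if and only if $2x\cdot\lambda\le|\lambda|^2$ for all $\lambda\in\Lambda$. Writing $\lambda=A^{1/2}n$ with $n\in\mathbb Z^3$ and $x=A^{-1/2}y$, this condition becomes
\[
2\,y\cdot n\le n^{\trans}An\qquad\text{for all }n\in\mathbb Z^3.
\]
The claim is thus that this set of $y$ equals $Z_\rho$.

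The first step computes the support function of $Z_\rho$: $h_{Z_\rho}(n)=\tfrac12\sum_\xi \xi\,|r_\xi\cdot n|$. Using $A=\sum_\xi\xi\, r_\xi r_\xi^{\trans}$, we get $n^{\trans}An=\sum_\xi \xi (r_\xi\cdot n)^2$. For integer $n$, each $r_\xi\cdot n$ is an integer, so $|r_\xi\cdot n|\le (r_\xi\cdot n)^2$, which gives $2h_{Z_\rho}(n)\le n^{\trans}An$. Hence $Z_\rho$ lies in the set above, i.e. $A^{-1/2}Z_\rho\subseteq\Vor(\Lambda)$. Equality holds exactly when every $r_\xi\cdot n$ lies in $\{-1,0,1\}$.

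The reverse inclusion is the main point. One route is to check that every facet normal of $Z_\rho$ is attained by such an $n$. Facets of a 3D zonotope are normal to $r_\xi\times r_\eta$ for pairs of generators. For the $K_4$ (graphical) arrangement these normals are, up to sign and scaling, the cut vectors $n_S$, i.e. the indicators of $S\setminus\{0\}$ up to complement, corresponding to the seven cuts. These satisfy $r_\xi\cdot n_S\in\{0,\pm1\}$ with $|r_\xi\cdot n_S|=1$ exactly on $\delta(S)$. Then $2h_{Z_\rho}(n_S)=n_S^{\trans}An_S=c(S)$, so the supporting halfspaces of $Z_\rho$ all occur among the lattice constraints. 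This forces the constraint set to be contained in $Z_\rho$, and equality follows.

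A more robust alternative, which also handles degenerate $\rho$ with some zero weights, uses volume. The set $\Vor(\Lambda)$ has volume $\covol(\Lambda)=\sqrt{D}$, and $A^{-1/2}Z_\rho$ has volume $D^{-1/2}|Z_\rho|$. Once we prove $|Z_\rho|=D$, the inclusion $A^{-1/2}Z_\rho\subseteq \Vor(\Lambda)$ between convex bodies of equal volume gives equality. This is the approach we adopt.

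The volume formula itself follows from Shephard's formula:
\[
|Z_\rho|=\sum_{\{\xi,\eta,\zeta\}}\xi\eta\zeta\,|\det(r_\xi,r_\eta,r_\zeta)|.
\]
For the graphical arrangement of $K_4$, the determinant $\det(r_\xi,r_\eta,r_\zeta)$ equals $\pm1$ if the three edges form a spanning tree and $0$ if they contain a triangle. This is unimodularity of the graphic matroid, checked directly on the 20 triples: the 4 triangles vanish and the 16 trees give $\pm1$. The sum is therefore exactly the spanning-tree polynomial \eqref{eq:D}, i.e. $D(\rho)$. The main care needed is the determinant check and the orthogonal normalization $M=UA^{1/2}$; the inclusion and volume steps are then routine.
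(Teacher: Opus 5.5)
Your proposal is correct and follows the paper's proof essentially step for step. The integrality bound $|r_\xi\cdot n|\le (r_\xi\cdot n)^2$ gives $A^{-1/2}Z_\rho\subseteq\Vor(\Lambda)$, Shephard's formula with the unimodularity of the $K_4$ incidence roots gives $|Z_\rho|=D(\rho)$, and since both bodies then have volume $\sqrt{D(\rho)}$ the inclusion is an equality. (One cosmetic slip: your ``equivalently $A^{1/2}\Vor(\Lambda)=AA^{-1/2}\Vor(\Lambda)$'' is a tautology; the reformulation you mean is $A^{1/2}\Vor(\Lambda)=Z_\rho$.)
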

\begin{proof}
Choose the lattice basis with Gram matrix $A(\rho)$, so that
$\Lambda=A^{1/2}(\rho)\mathbb Z^3$.  For $n\in\mathbb Z^3$, the support function of
$Z_\rho$ is
\[
 h_{Z_\rho}(n)=\frac12\sum_\xi  \xi|n\cdot r_\xi|.
\]
Since $n\cdot r_\xi\in \mathbb Z$, one has
$|n\cdot r_\xi|\le(n\cdot r_\xi)^2$.  Recalling that 
$A(\rho)=\sum_\xi  \xi r_\xi r_\xi^{\mathsf T}$,
\[
 h_{Z_\rho}(n)\le\frac12 nA(\rho)n^{\mathsf T}.
\]
These are precisely the Voronoi half-space
inequalities for the lattice $A^{1/2}(\rho)\mathbb Z^3$ applied to vectors scaled by  $A^{-1/2}(\rho)$.   Thus
$A^{-1/2}(\rho)Z_\rho\subseteq\Vor(\Lambda)$.

By Shephard's zonotope decomposition formula \cite[Eq. (57)]{Shephard1974},
\[
 |Z_\rho|
 =\sum_{\substack{I\subset E(K_4)\\ |I|=3}}
   \left(\prod_{\xi\in I}\rho_\xi\right)
   \left|\det(r_\xi)_{\xi\in I}\right|,
\]
where $\rho_\xi$ denotes the weight of the edge $\xi$.  For the incidence
roots of $K_4$, the determinant has absolute value one precisely when $I$ is
a spanning tree, and vanishes otherwise.  Hence
\[
 |Z_\rho|
 =\sum_{T\in\mathcal T(K_4)}\prod_{\xi\in T}\rho_\xi
 =D(\rho),
\]
and the image of $Z_\rho$ under $A^{-1/2}(\rho)$ has volume
$D^{1/2}(\rho)=|\Vor(\Lambda)|$.  
Recalling that $A^{-1/2}Z_\rho\subseteq\Vor(\Lambda)$, this implies the conclusion.
 \end{proof}

Finally we recall  that  the strict inequalities $\rho_{ij}>0$ for all $i\neq j$ correspond to  a lattice whose Voronoi cell has  
fourteen facets, whereas the class  of full-rank lattices with at least one vanishing Selling parameter corresponds to the nondegenerate boundary of the Selling cone  and the  Voronoi cells associated to such lattices have at most twelve facets.

Given the nonnegative edge-weighted
realization of $K_4$ associated to a lattice with Selling parameters $\rho_{ij}$, the active graph $G_\rho$ consists of the subgraph of $K_4$ with the same number of vertices and  only with the  edges which have  positive weight.

We have the following result (for the proof we refer to \cite{ConwaySloane1992article}). 

\begin{proposition}
\label{prop:zonotope-facets}
If $D(\rho)>0$, the active graph $G_\rho\subset K_4$ is connected.  The opposite
facet pairs of $\Vor(\Lambda)$ are in bijection with unordered nontrivial bipartitions
$S\mid S^c$ for which both induced graphs $G_\rho[S]$ and $G_\rho[S^c]$ are
connected.  Consequently, $\Vor(\Lambda)$ has fourteen facets if and only if all six
Selling parameters are positive; otherwise $\Vor(\Lambda)$ has at most
twelve facets.
\end{proposition}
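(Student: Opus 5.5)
The plan is to work entirely inside the zonotope of Proposition~\ref{prop:voronoi-zonotope}: first describe the facets of a general graphical zonotope, then specialize to $Z_\rho$, and finally transport the result by the linear map $A(\rho)^{-1/2}$.

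\emph{Connectedness.} By Proposition~\ref{prop:voronoi-zonotope}, $\Vor(\Lambda)$ is a linear image of $Z_\rho$, so it suffices to study $Z_\rho$. If $D(\rho)>0$, the expansion \eqref{eq:D} contains a nonzero monomial. Each monomial of $D$ is the product of the weights of a spanning tree of $K_4$. Hence some spanning tree has all three of its edges active, so $G_\rho$ contains a spanning tree and is connected.

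\emph{Facet structure of $Z_\rho$.} I will use the standard description of zonotope facets. The facets of a $3$-dimensional zonotope $\sum_\xi [-\tfrac{\xi r_\xi}2,\tfrac{\xi r_\xi}2]$ come in opposite pairs. These pairs correspond to the $2$-dimensional linear subspaces (flats) $H$ spanned by active generators. The normal of the pair is the line $H^\perp$, and $H$ must be generated by the active $r_\xi$ it contains.

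For the graphical arrangement $\{r_\xi:\xi\in E(G_\rho)\}$, the rank of a set of edges equals $4$ minus the number of connected components of the spanning subgraph they form. A flat of rank $2$ is therefore an edge set $F$ that is closed and has exactly two components on the vertex set $\{0,1,2,3\}$. Closedness means $F$ consists of all active edges inside each component. Thus rank-$2$ flats correspond bijectively to bipartitions $S\mid S^c$ for which $G_\rho[S]$ and $G_\rho[S^c]$ are both connected, with $F=E(G_\rho[S])\cup E(G_\rho[S^c])$. Concretely, the normal direction is the cut vector $n_S=\sum_{i\in S}e_i$ in the coordinates where $p_0=0$; equivalently it is $\mathbf 1_S$ modulo constants. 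A generator $r_\xi$ satisfies $n_S\cdot r_\xi\neq0$ exactly when $\xi$ crosses the cut.

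\emph{Transport to $\Vor(\Lambda)$.} The map $A(\rho)^{-1/2}$ is linear and invertible, so it carries facets to facets bijectively. This gives the stated bijection between opposite facet pairs and admissible bipartitions.

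\emph{Counting.} Modulo complementation there are seven nontrivial bipartitions: four singletons and three $2|2$ splits. If all six parameters are positive, every induced subgraph of $K_4$ is connected, so all seven are admissible, giving $14$ facets.

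If some $\rho_{ij}=0$, consider the $2|2$ split $\{i,j\}\mid\{k,l\}$. The induced graph on $\{i,j\}$ is the single edge $ij$, which is missing, so that side is disconnected. This split is not admissible, leaving at most $6$ pairs, i.e.\ at most $12$ facets.

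The main obstacle is making the zonotope-facet step rigorous, namely the correspondence between facets and rank-$2$ flats, including the nondegeneracy of each such facet. I would cite the standard fact that facets of a zonotope correspond to hyperplanes of its generator matroid. I would then verify the graphic-matroid identification of those hyperplanes with connected bipartitions, as sketched above.
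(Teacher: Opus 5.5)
Your argument is correct, and it takes a genuinely different route from the paper. The paper gives no proof of its own here and defers to Conway and Sloane. Their treatment is lattice-intrinsic: the candidate facet vectors are the seven vonorm pairs $\pm\sum_{i\in S}v_i$, and the vanishing pattern of the conorms (the Selling parameters) decides which of these are Voronoi-relevant.

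You instead go through the zonotopal representation of Proposition~\ref{prop:voronoi-zonotope} and matroid theory. The face of a zonotope exposed by $n$ is the zonotope of the generators orthogonal to $n$, so opposite facet pairs are exactly the hyperplanes of the generator matroid. For the graphic matroid of a connected graph, these hyperplanes are the complements of bonds, i.e.\ of cuts $\delta(S)$ with $G_\rho[S]$ and $G_\rho[S^c]$ both connected. The step you flag as the main obstacle is in fact immediate from this description of the faces, since full-dimensionality makes the two faces exposed by $\pm n$ distinct.

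What your route buys is self-containedness inside the paper's own framework. It also justifies the assertion in Subsection~\ref{esplicita} that the facet exposed by $z_i$ is the planar zonotope of the active edges internal to the two sides of the cut. Because it never uses the Voronoi metric relation, it applies verbatim to $LZ_\omega$ for any $L\in GL(3,\R)$, as in Section~\ref{sec:affine-extension}. The Conway--Sloane route, by contrast, needs no zonotope representation and identifies the facet normals directly as lattice vectors. That is the natural viewpoint for Voronoi cells, but it is tied to the Euclidean Voronoi metric.
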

\begin{corollary} 
\label{cor:twelve-facets-boundary}
Let $\Lambda\subset\R^3$ be a full-rank lattice and $P=\Vor(\Lambda)$.  For
any non-negative Selling parameter vector obtained from an obtuse superbase,
the following are equivalent:
\begin{enumerate}
 \item $P$ has at most twelve facets;
 \item at least one Selling parameter vanishes;
 \item the parameter vector lies on the nondegenerate boundary
 \[
 \left\{\rho\in\mathbb R_{\ge0}^6:D(\rho)>0,\ \min_{ij}\rho_{ij}=0\right\}.
 \]
\end{enumerate}
Moreover, $P$ has fourteen facets if and only if all six parameters are
positive; in this case $P$ is combinatorially a truncated octahedron.
\end{corollary}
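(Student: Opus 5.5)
The plan is to read the corollary off Proposition~\ref{prop:zonotope-facets}, after checking that the facet count is an intrinsic property of $P$ and does not depend on which obtuse superbase was chosen. Fix an obtuse superbase of $\Lambda$ with Selling vector $\rho\in\R^6_{\ge0}$. Since the superbase contains a lattice basis, $A(\rho)$ is positive definite and $D(\rho)>0$. By Proposition~\ref{prop:voronoi-zonotope}, $P=\Vor(\Lambda)$ is a linear image of $Z_\rho$, so $P$ and $Z_\rho$ have the same number of facets. Proposition~\ref{prop:zonotope-facets} then gives the count purely in terms of the active graph $G_\rho$. Because the number of facets of $P$ is intrinsic, the conclusion will hold for every admissible $\rho$, as the statement requires.

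The equivalence (2)$\Leftrightarrow$(3) is immediate. Every such $\rho$ satisfies $\rho\ge0$ and $D(\rho)>0$, so $\rho$ lies in the nondegenerate boundary exactly when $\min\rho_{ij}=0$.

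For (1)$\Leftrightarrow$(2) I will carry out the count in Proposition~\ref{prop:zonotope-facets} explicitly.
\begin{itemize}
\item \textbf{All six parameters positive.} Then $G_\rho=K_4$ and every induced subgraph is connected. All seven bipartitions listed in \eqref{cutw} therefore contribute, giving $14$ facets.
\item \textbf{Some parameter vanishes, say $\rho_{ij}=0$.} The bipartition $\{i,j\}\mid\{k,l\}$ has $G_\rho[\{i,j\}]$ disconnected, since the only possible edge $ij$ is missing. So this bipartition contributes no facet, and at most $6$ facet pairs remain, i.e.\ at most $12$ facets.
\end{itemize}
This yields (1)$\Leftrightarrow$(2) and also the ``fourteen facets iff all positive'' statement.

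For the last claim, when all $\rho_\xi>0$ the zonotope $Z_\rho$ is generated by the six vectors $r_\xi$ with nonzero lengths. The normal fan of a zonotope depends only on the directions of its generators, so $Z_\rho$ is normally equivalent to the zonotope with all weights equal to $1$. That zonotope is the permutohedron of $K_4$, which is combinatorially a truncated octahedron. Applying $A(\rho)^{-1/2}$ preserves the face lattice, so $P$ is combinatorially a truncated octahedron as well.

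The only genuinely delicate point is the independence from the choice of superbase. If a lattice has two obtuse superbases, one with a zero parameter and one with all parameters positive, the facet counts computed from them would have to disagree. This cannot happen, because the facet count is a property of $P$ alone. I expect no real obstacle beyond stating this carefully.
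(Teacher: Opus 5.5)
Your proposal is correct and takes the same route as the paper. The paper states the corollary as an immediate consequence of Proposition~\ref{prop:zonotope-facets}, and your count of the seven bipartitions (all contribute when $G_\rho=K_4$, while $\rho_{ij}=0$ removes the cut $\{i,j\}\mid\{k,l\}$) is exactly that proposition's ``consequently'' clause. Your normal-fan argument, which shows $Z_\rho$ is normally equivalent to the $K_4$ permutohedron, supplies the truncated-octahedron claim that the paper only asserts.
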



 
\subsection{The explicit formula for the isoperimetric quotient} \label{esplicita}

We associate to the seven nontrivial vertex cuts of $K_4$, modulo complementation, the following vectors  
\begin{align}\notag
& z_0=(1,1,1)^\trans,
& z_1=(1,0,0)^\trans,\qquad
&z_2=(0,1,0)^\trans,
&z_3=(0,0,1)^\trans,\\
 &z_4=(0,1,1)^\trans,
&z_5=(1,0,1)^\trans,\qquad 
&z_6=(1,1,0)^\trans.&\label{zi}
\end{align}
Indeed the facet of $Z_\rho$ exposed by $z_i$ is, up to translation, the planar zonotope
generated by the active edges internal to the two sides of the cut.

\begin{proposition}\label{prop:surface-formula}
 For every lattice $\Lambda$ with   $D(\rho)>0$, the scale-invariant
isoperimetric quotient of the lattice Voronoi cell is
\begin{equation}\label{eq:closed-F}
\Iso(\Vor(\Lambda))=F(\rho)=\frac{2N(\rho)}{D(\rho)^{5/6}},\qquad
 N(\rho)=\sum_{i=0}^6q_i\sqrt{c_i}.
\end{equation}
The total surface area is $ \mathcal H^2(\partial\Vor(\Lambda))=2N(\rho)/\sqrt{D(\rho)}$.
\end{proposition}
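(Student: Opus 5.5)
The plan is to compute the surface area of $Z_\rho$ facet by facet, and then push it forward under the linear map $A(\rho)^{-1/2}$ using the standard area transformation rule for linear maps. By Proposition~\ref{prop:voronoi-zonotope}, $\Vor(\Lambda)=A^{-1/2}Z_\rho$ and $|\Vor(\Lambda)|=D^{1/2}$. Hence
\[
\Iso=\frac{\mathcal H^2(\partial\Vor(\Lambda))}{D^{1/3}},
\]
and the first claim follows once we show $\mathcal H^2(\partial\Vor(\Lambda))=2N/\sqrt D$, since $2N D^{-1/2}D^{-1/3}=2N D^{-5/6}$.

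The facets of $Z_\rho$ come in opposite pairs, exposed by the normals $\pm z_i$ of \eqref{zi}, one pair for each cut $S\mid S^c$. Indeed, $z_i\cdot r_\xi$ is $\pm1$ exactly when $\xi$ crosses the cut and $0$ when $\xi$ is internal to a side. So the facet exposed by $z_i$ is a translate of the planar zonotope generated by the segments $\xi r_\xi$ with $\xi$ internal to $S$ or to $S^c$.

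For a planar zonotope $W\subset z_i^\perp$ generated by these vectors, apply the two-dimensional Shephard formula. Its area is the sum, over pairs of internal edges, of $\rho_\xi\rho_\eta|r_\xi\times r_\eta|$. The cross product of two incidence vectors lying in $z_i^\perp$ is either $0$ (for a pair forming a cycle, or collinear) or $\pm z_i$ in a suitable sense. More precisely, when the two edges form a spanning forest of $K_4[S]\sqcup K_4[S^c]$, the pair spans a fundamental parallelogram of the lattice $\mathbb Z^3\cap z_i^\perp$, of area $|z_i|$. A quick check: for $S=\{0\}$ the internal edges $d,e,f$ satisfy $r_d\times r_e=(1,1,1)=z_0$. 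Thus
\[
\text{area of the facet of } Z_\rho=q_i\,|z_i|,
\]
with $q_i$ the spanning-forest polynomial of \eqref{qdef}. Verifying this lattice-index claim for all seven cut types is the main technical check. By $S_4$-symmetry it reduces to one singleton cut and one $2|2$ cut, e.g.\ $r_a\times r_f=e_1\times(e_2-e_3)=(0,1,1)=z_4$.

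Next apply the linear map $M=A^{-1/2}$. For a planar region with unit normal $u$, the image area equals
\[
|\det M|\,|M^{-\trans}u|\cdot\text{area}.
\]
Writing $u=z_i/|z_i|$, the facet areas become
\[
D^{-1/2}\,q_i\,|A^{1/2}z_i| = D^{-1/2}\,q_i\sqrt{z_i^\trans A z_i}.
\]
Finally, compute $z_i^\trans A z_i=\sum_\xi\rho_\xi(z_i\cdot r_\xi)^2$. Since $z_i\cdot r_\xi\in\{0,\pm1\}$ equals $\pm1$ exactly on edges crossing the cut, this is $c_i$ from \eqref{cutw}. Summing over the seven pairs of opposite facets gives total area $2\sum_i q_i\sqrt{c_i}/\sqrt D=2N/\sqrt D$.

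When some $\rho_\xi=0$, degenerate facets simply contribute $q_i=0$ or are absorbed: the formula for $q_i$ automatically vanishes when an induced side is disconnected in $G_\rho$, in line with Proposition~\ref{prop:zonotope-facets}. So no separate case analysis is needed.
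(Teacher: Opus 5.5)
Your proposal is correct and follows essentially the same route as the paper. The planar zonotope formula gives facet areas $q_i|z_i|$ for $Z_\rho$, and the area-transformation rule under $A(\rho)^{-1/2}$ together with $z_i^{\mathsf T}A(\rho)z_i=c_i$ turns these into $q_i\sqrt{c_i}/\sqrt{D(\rho)}$; summing the seven opposite pairs and dividing by $|\Vor(\Lambda)|^{2/3}=D(\rho)^{1/3}$ gives the formula. The paper leaves three points implicit that you justify, and your justifications are sound: why $z_i^{\mathsf T}Az_i=c_i$, why every pair of internal edges has cross product $\pm z_i$, and why strata with vanishing parameters are covered by $q_i=0$.
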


\noindent This formula has been derived in \cite[Section 2]{CesaroniNovaga2026}, for the sake of completeness we provide the proof also here. 

We recall that \begin{align*}
\Iso_\BCC&=F(1,1,1,1,1,1) =\frac{3(1+2\sqrt3)}{4^{2/3}}:=F_\BCC\\\Iso_\FCC&=F(0,1,1,1,1,0) =3\ 2^{\frac56}:=F_\FCC.\end{align*}
\begin{proof}
  For planar generators
$g_1,\ldots,g_m$ lying in a plane with unit normal $\nu$, the 
two-dimensional zonotope formula is
\[
 \mathcal H^2\left(\sum_{k=1}^m[0,g_k]\right)
 =\sum_{1\le r<s\le m}|(g_r\times g_s)\cdot\nu|.
\]
We are going to apply this  formula to $g_\xi= \xi r_\xi$ with $\xi\in\{a,b,c,d,e,f\}$ and
$\nu=z_i/|z_i|$, where $z_i$ are defined in \eqref{zi}. 

More precisely, for a singleton cut, say $\{0\}\mid\{1,2,3\}$,
the free edges are $12,13,23$.  Their three cross products are parallel to
$z_0$ and have respective magnitudes $de|z_0|$, $df|z_0|$ and
$ef|z_0|$; their sum is $q_0|z_0|$.  Relabelling gives the other singleton
cuts.  

For a $2+2$ cut there is one internal edge on each side; for example
$\{0,1\}\mid\{2,3\}$ gives the parallelogram generated by
$ar_a$ and $fr_f$, whose area is $af|z_4|=q_4|z_4|$.  

Thus in every case
\begin{equation}\label{eq:facet-Z-area}
 \mathcal H^2(F_i(Z_\rho))=q_i|z_i|.
\end{equation}
Now we recall by Proposition \ref{prop:voronoi-zonotope} that $
\Vor(\Lambda)=A(\rho)^{-1/2}Z_\rho$. 

If an invertible linear map $L$ acts on a planar set with normal $n$, its area
is multiplied by $|\det L|\,|L^{-\mathsf T}n|/|n|$.  Apply this with
$L=A^{-1/2}(\rho)$ and $n=z_i$, using the fact that   $z_i^{\mathsf T}A(\rho)z_i=c_i$, we obtain from
\eqref{eq:facet-Z-area}  that 
\[
 \mathcal H^2(F_i(\Vor(\Lambda)))
 =D^{-1/2}(\rho)\frac{\sqrt{c_i}}{|z_i|}\,q_i|z_i|
 =\frac{q_i\sqrt{c_i}}{\sqrt{  D(\rho)}}.
\]
The opposite facet has the same area.  Summing the seven opposite pairs we obtain the total surface area of the Voronoi cell \[
 \mathcal H^2(\partial\Vor(\Lambda))= 2\sum_{i=0}^6 \frac{q_i\sqrt{c_i}}{\sqrt {D(\rho)}}
.\] To conclude we divide it 
  by $D^{1/3}(\rho)$.
\end{proof}

\smallskip

 We conclude with an observation that will be useful when proving the interior isoperimetric inequality  (that is, among lattices with Selling parameters  $\rho_{ij}>0$ for all $i, j$).  
Let
\begin{equation}\label{eq:M-J}
 M(\rho)=\sum_{i=0}^6\frac{q_i}{\sqrt{c_i}}z_i z_i^\trans,
 \qquad
 \cJ(\rho)=\frac{\det M(\rho)}{D^{3/2}(\rho)}.
\end{equation}

In the positive cone of $\rho_{ij}> 0$, both $A(\rho)$ and $M(\rho)$ are positive definite.  Since
$z_i^{\mathsf T}A(\rho)z_i=c_i$,
\[
 \operatorname{tr}(A^{1/2}(\rho)M(\rho)A^{1/2}(\rho))
 =\sum_{i=0}^6\frac{q_i}{\sqrt{c_i}}\,z_i^{\mathsf T}A(\rho)z_i
 =\sum_{i=0}^6q_i\sqrt{c_i}=N(\rho).
\]
Let $\lambda_1,\lambda_2,\lambda_3>0$ be the eigenvalues of
$A^{1/2}(\rho)M(\rho)A^{1/2}(\rho)$.  Then
\[
 N(\rho)=\sum_{j=1}^3\lambda_j
 \ge3(\lambda_1\lambda_2\lambda_3)^{1/3}
 =3(D(\rho)\det M(\rho))^{1/3},
\]
by the arithmetic--geometric mean inequality.  Therefore, using
\eqref{eq:closed-F} and \eqref{eq:M-J}, we obtain the following inequality relating the isoperimetric quotient with the determinant $\cJ(\rho)$
\begin{equation}\label{eq:spectral}
 F(\rho)=\frac{2N(\rho)}{D(\rho)^{5/6}}
 \ge6\left(\frac{\det M(\rho)}{D(\rho)^{3/2}}\right)^{1/3}
 =6\cJ^{1/3}(\rho).
\end{equation}
The  BCC lattice has Selling parameters $\rho_{ij}\equiv 1$, that is $a=\cdots=f=1$. 
Therefore, a direct substitution gives
\[
 A^{1/2}(1)M(1)A^{1/2}(1)=(2+4\sqrt3)I_3
\] where $I_3$ is the identity matrix. 
For every BCC rescaling, $A^{1/2}(\rho)M(\rho)A^{1/2}(\rho)$ remains a positive scalar multiple of the
identity.  Thus the three eigenvalues $\lambda_i$ coincide,
and equality is attained in \eqref{eq:spectral}. 
Moreover,
\begin{equation}\label{eq:constants}
 F_{\BCC}=F(1)=\frac{3(1+2\sqrt3)}{4^{2/3}},
 \qquad
  \cJ_{\BCC}= \cJ(1)=\left(\frac{F_{\BCC}}6\right)^3
 =\frac{37+30\sqrt3}{128}.
\end{equation}
Therefore the inequality
\begin{equation}\label{eq:sharpdet}
\cJ(\rho)\ge \cJ_{\BCC}
\end{equation} 
implies the isoperimetric inequality $F(\rho)\ge F_{\BCC}$.

\section{The FCC theorem}\label{sec:FCC}
This section  is devoted to the analysis of the isoperimetric inequality for  full-rank lattices with at least one vanishing Selling parameter. 
In particular, we prove the
sharp boundary theorem for lattice Voronoi cells, showing that the unique minimizer in this case is the Voronoi cell of the $\FCC$ lattice.

 First of all, we recall a known result by  Jo\'os and L\'angi
\cite{JoosLangi2023} showing that 
among  lattices with at least two vanishing Selling parameters,  the $\FCC$ lattice has the Voronoi cell with minimal isoperimetric quotient. 
\begin{proposition} \label{prop:four-generators}
If $D(\rho)>0$ and at most four Selling parameters are positive, then
\[
 F(\rho)\ge F_{\FCC}=3\,2^{5/6}.
\]
Equality holds precisely on the $S_4$-orbit of the $\FCC$ lattice, which has Selling parameters $(0,m,m,m,m,0)$, for $m>0$.
\end{proposition}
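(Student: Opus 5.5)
The plan is to classify the active graphs $G_\rho$ with at most four edges and $D(\rho)>0$, and then handle each class directly. By Proposition~\ref{prop:zonotope-facets}, $G_\rho$ is a connected spanning subgraph of $K_4$ with at most four edges. Up to the $S_4$-action there are only a few types:
\begin{itemize}
\item a spanning tree, either the path $P_4$ or the star $K_{1,3}$, with three edges;
\item the $4$-cycle $C_4$;
\item a triangle with a pendant edge (the ``paw'').
\end{itemize}
Since $F$ is $S_4$-invariant, it suffices to treat one representative of each type.

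\textbf{Tree case.} Here $D(\rho)=\prod_{\xi\in T}\rho_\xi$, so $\Lambda$ has an orthogonal basis up to the linear change given by $A$, and $\Vor(\Lambda)$ is a box. Concretely, for the star with center $0$ we get $A=\mathrm{diag}(a,b,c)$, and $\Vor$ is the rectangular box with sides $\sqrt a,\sqrt b,\sqrt c$. AM--GM then gives $\Iso\ge 6>F_\FCC$. The path tree is also a parallelepiped, with $D$ equal to the product of the weights. I would compute $F$ for it from \eqref{eq:closed-F} and bound it below using the classical fact that among parallelepipeds the cube minimizes $\Iso$ (value $6$). The active cuts number three, each with $q_i$ a single product, so $F=2\sum q_i\sqrt{c_i}/D^{5/6}\ge 6$ by AM--GM on the three terms together with $c_i\ge$ (the relevant weight).

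\textbf{$C_4$ case.} Take the cycle $0$--$1$--$3$--$2$--$0$, with weights $a,e,f,b$ (and $c=d=0$). The active cuts are:
\begin{itemize}
\item the four singletons, with $q_0=0$ unless\dots;
\item the two $2+2$ cuts in which each side is an edge of the cycle.
\end{itemize}
The Voronoi cell is then a hexagonal prism, or equivalently a product of a rhombus-type planar zonotope with a segment. I would write $N$ and $D$ explicitly; $D$ is the sum of the four products of three cycle edges. Then:
\begin{itemize}
\item show the minimum occurs at equal weights by homogeneity and a symmetrization/convexity argument in the weights;
\item check that the resulting value exceeds $F_\FCC$.
\end{itemize}

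\textbf{Paw case and main obstacle.} The four-edge graphs containing a triangle plus pendant are, in fact, the FCC stratum. The orbit of $(0,m,m,m,m,0)$ has active edges $02,03,12,13$, which form a $4$-cycle. So I would recheck which type contains FCC, and FCC is in fact the $C_4$ type. The $C_4$ analysis is therefore the sharp case and the main obstacle: I must prove $F(a,b,0,0,e,f)\ge 3\,2^{5/6}$ with equality only when all four weights are equal, and then show the paw and tree types are strictly worse.

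For the sharp $C_4$ inequality I would use the spectral bound \eqref{eq:spectral} restricted to the active cuts, reducing it to the determinant inequality $\det M/D^{3/2}\ge (F_\FCC/6)^3$. I would then prove this by Lagrange multipliers on the normalized simplex $a+b+e+f=1$, combined with the symmetry of $C_4$ (the dihedral group of order $8$), to show that the only interior critical point is the equal-weight one. Boundary faces of this simplex drop to trees, which give $\ge 6$.

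For the paw case (triangle $123$ with pendant $01$, weights $a,d,e,f$), the cell is a hexagonal prism. Its $\Iso$ is minimized over prisms by the regular hexagonal prism with optimal height, whose value is $\approx 5.5>F_\FCC$. Hence the paw case gives a strict inequality, and equality overall occurs only on the FCC orbit. Since this result is credited to Jo\'os--L\'angi \cite{JoosLangi2023}, where these computations are carried out, I would cite them for the routine case checks.
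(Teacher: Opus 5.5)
\textbf{There is a genuine gap, and it is in the only case that matters: the $4$-cycle, where FCC lives and where the sharp inequality and its equality case must be proved.} Your tree and paw cases are fine. Trees give a rectangular box, so $F\ge 6$ by AM--GM. The paw gives a right prism over a planar lattice hexagon, since $v_0\perp v_2,v_3$ there. Its optimum is $6(\sqrt3/2)^{1/3}\approx5.72$, not $5.5$, but that is still $>F_{\FCC}$.

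For the $4$-cycle you only have a plan. You reduce via \eqref{eq:spectral} to $\cJ\ge 2^{-1/2}$ and then claim, by Lagrange multipliers plus dihedral symmetry, that the equal-weight point is the only interior critical point. That claim is exactly what needs proof. Symmetry shows only that the equal-weight point is critical, not that it is the unique critical point, let alone the global minimizer. The normalized simplex is also not a compact domain here. Its codimension-two faces, where two cycle weights vanish, have $D=0$, and you would have to control $\cJ$ near them; you only discuss the codimension-one tree faces. Moreover, $\cJ\ge\cJ_{\FCC}$ is stronger than the needed $F\ge F_{\FCC}$: the spectral step is tight on this stratum only at equal weights. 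So its validity on the whole stratum is a further unproved claim. Finally, the $4$-cycle cell is not a hexagonal prism. Any three of the four cycle roots are independent, so it is a twelve-facet rhombic-dodecahedral zonotope; the prism is the paw. Also, $q_0=ef\neq0$ there.

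\textbf{Falling back on Jo\'os--L\'angi gives the inequality but not the equality case.} If you cite them, as your last sentence suggests, the case split becomes unnecessary. Their Theorem~3 covers all zonotopes with three or four generators, so it gives $F\ge\min\{F_{\mathrm{SC}},F_{\FCC}\}=F_{\FCC}$ on every stratum at once. What is then missing from your proposal is the translation of their equality case into Selling coordinates. The paper does this in three steps:
\begin{enumerate}
\item Since $F_{\mathrm{SC}}>F_{\FCC}$, equality forces four generators and a regular rhombic dodecahedron.
\item The paw is excluded: its three triangle roots are coplanar, whereas any three generators of the regular rhombic dodecahedron are independent.
\item On a $4$-cycle, the physical generators $g_\xi=\rho_\xi A(\rho)^{-1/2}r_\xi$ satisfy a unique linear relation with coefficients proportional to $\rho_\xi^{-1}$. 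For the regular rhombic dodecahedron the generators are the vertices of a centred regular tetrahedron, so the relation has coefficients of equal modulus. Hence the four active weights coincide, which is the $S_4$-orbit of $(0,m,m,m,m,0)$.
\end{enumerate}
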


\begin{proof}
By Proposition~\ref{prop:voronoi-zonotope}, $\Vor(\Lambda)$ is a full-dimensional
zonotope generated by three or four nonzero segments.  
Jo\'os and L\'angi
\cite[Theorem~3, with $d=3$ and $k=2$]{JoosLangi2023}
prove that, among three-dimensional zonotopes generated by three or four
segments and of fixed volume, the second intrinsic volume, that in dimension $3$ is given by $\mathcal H^2(\partial \Vor(\Lambda))/2$  is uniquely
minimized, respectively, by the cube $\mathrm{SC}$ which is associated to Selling parameters $(1,1,1,0,0,0)$ and by the regular rhombic
dodecahedron $\FCC$. Thus their result gives the sharp surface-area bounds
\[
 F(\rho)\geq
\min\{F_{\mathrm{SC}},F_{\FCC}\}
 =F_{\FCC}=3\,2^{5/6},
\]
where $F_{\mathrm{SC}}=F(1,1,1,0,0,0)=6$.  It remains to identify equality in Selling
coordinates.  Since $F_{\mathrm{SC}}>F_{\FCC}$, equality requires exactly
four positive coordinates and the corresponding zonotope must be a regular
rhombic dodecahedron.  A connected four-edge graph on four vertices is either
a $4$-cycle or a triangle with a pendant edge.  The latter is impossible,
because the three roots belonging to the triangle are linearly dependent,
whereas every three generators of a regular rhombic dodecahedron are
independent.

On a $4$-cycle, orient the four roots cyclically.  Their unique linear
relation has coefficients of equal absolute value.  The physical generators
are
\[
 g_\xi=\rho_\xi A(\rho)^{-1/2}r_\xi.
\]
Consequently, the coefficients in their unique dependence are proportional
to $\rho_\xi^{-1}$.  For a regular rhombic dodecahedron, after a suitable
orientation, the four generators are the vertices of a regular tetrahedron
centred at the origin, so the coefficients in their unique dependence have
equal absolute value.  Hence the four active Selling parameters are equal.
The three $4$-cycles of $K_4$ are the complements of the three pairs of
opposite edges, which is exactly the $S_4$-orbit of
$(0,m,m,m,m,0)$, $m>0$.
\end{proof}

We are therefore left to prove the same result for lattices with exactly one
vanishing Selling parameter.  In this case the result of Jo\'os and L\'angi
in \cite{JoosLangi2023} does not apply.  Although a Voronoi cell with five
positive Selling parameters is a five-generator zonotope, the class of all
five-generator zonotopes is strictly larger: some members of that larger
class have a smaller isoperimetric quotient than the regular rhombic
dodecahedron, but they do not satisfy the lattice--Voronoi metric relation.

In order to prove the result,
we need to use a different representation  of the isoperimetric quotient $F$.    

\subsection{Lattices with at least one null Selling parameter}
By tetrahedral symmetry it suffices to take $a=0$ and let $f=t$, be a varying parameter (that could also be $0$). In this case the weighted spanning-tree polynomials $q_i$ defined in \eqref{qdef} and the weights of the cuts defined in \eqref{cutw} are 
\begin{align*}
&q_0= de+t(d+e), &&c_0= b+c,\\
&q_1= bc+t(b+c), &&c_1= d+e,\\
&q_2= ce, &&c_2= b+d+t,\\
&q_3= bd, &&c_3= c+e+t,\\
&q_4=0  &&c_4= b+c+d+e,\\
&
q_5= be &&c_5= c+d+t  \\
& q_6= cd. &&c_6= b+e+t
\end{align*}
We recall from  \eqref{eq:closed-F}   and \eqref{eq:D} that $N(\rho)=\sum_{i=0}^6 q_i\sqrt{c_i}$
\[
D(\rho)=bcd+bce+bde+bdt+bet+cde+cdt+cet.
\]
The term containing $\sqrt{c_4}$ vanishes because $q_4=0$, so it will be
omitted below.

We define 
\begin{equation}\label{metric}
 d_{01}=\sqrt{c_0},\ d_{23}=\sqrt{c_1},\ d_{02}=\sqrt{c_2},
 \ d_{13}=\sqrt{c_3},\ d_{12}=\sqrt{c_5},\ d_{03}=\sqrt{c_6}
\end{equation}
and we observe that 
the array $d$ is an element of the metric cone on four points according to this definition.  
\begin{definition}[Metric cone on four points]
Let us fix 
  four points $0,1,2,3$ and define $\mathrm{MET}_4$ as the set of all possible semimetrics on $\{0,1,2,3\}.$ More precisely    $d=(d_{ij})_{0\le i,j\le3}\in \mathrm{MET}_4$ if and only if   $d_{ii}=0$, $d_{ij}=d_{ji}\geq 0$, and  $d_{ik}\leq d_{ij}+d_{jk}$.
$\mathrm{MET}_4$  is a cone, that is, it is closed by addition and multiplication by positive scalar.
\end{definition} 
 \begin{lemma} Let $d$ be defined as in \eqref{metric} and let us  extend it to an array $d_{ij}$, for all $0\leq i, j\leq 3$ by imposing   $d_{ij}=d_{ji}$ for $i>j$ and $d_{ii}=0$.  Then $d\in \mathrm{MET}_4$.   
 \end{lemma}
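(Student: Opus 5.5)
We need to show that $d$ is a semimetric on $\{0,1,2,3\}$, i.e.\ that all triangle inequalities hold. With $a=0$ and $f=t$, the squared distances are
\[
 d_{01}^2=b+c,\quad d_{23}^2=d+e,\quad d_{02}^2=b+d+t,\quad d_{13}^2=c+e+t,\quad d_{12}^2=c+d+t,\quad d_{03}^2=b+e+t .
\]
The key structural observation is that each squared distance is a sum of nonnegative weights. Moreover, $d_{ij}^2+d_{jk}^2-d_{ik}^2$ is always a sum of nonnegative terms, so in fact $d_{ik}^2\le d_{ij}^2+d_{jk}^2$ for every triangle $i,j,k$.

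Granting this, the triangle inequality follows at once. Indeed,
\[
 d_{ik}\le\sqrt{d_{ij}^2+d_{jk}^2}\le d_{ij}+d_{jk}.
\]
The remaining properties are immediate: symmetry and vanishing on the diagonal hold by construction, and nonnegativity holds because the $c_i$ are nonnegative.

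So the plan is to check the squared inequality on the four triangles $\{0,1,2\}$, $\{0,1,3\}$, $\{0,2,3\}$, $\{1,2,3\}$, each with its three choices of the ``long'' side. That is twelve checks, which I will organize by triangle.

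\emph{Triangle $012$:} the sides are $b+c$, $b+d+t$, $c+d+t$. Each one is at most the sum of the other two, since the difference equals twice a single variable ($2d+2t$, $2c$, $2b$ respectively).

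\emph{Triangle $013$:} the sides are $b+c$, $c+e+t$, $b+e+t$. This is the same pattern, with differences $2e+2t$, $2b$, $2c$.

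\emph{Triangle $023$:} the sides are $b+d+t$, $b+e+t$, $d+e$. Again the differences are $2b+2t$, $2e$, $2d$.

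\emph{Triangle $123$:} the sides are $c+e+t$, $c+d+t$, $d+e$. Again the differences are $2c+2t$, $2d$, $2e$.

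All of these differences are nonnegative because $b,c,d,e,t\ge0$.

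There is no real obstacle here. The only care needed is the bookkeeping: correctly matching each $c_i$ to its pair $\{i,j\}$ as in \eqref{metric}, and using the values of the cut weights from \eqref{cutw} with $a=0$. Conceptually, the vertices of each triangle carry squared distances of the form $x+y$, $y+z$, $x+z$ with $x,y,z\ge0$ (for instance $x=b$, $y=c$, $z=d+t$ for triangle $012$), and the triangle inequality for such squared lengths is immediate.
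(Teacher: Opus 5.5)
Your proof is correct and follows the paper's argument: you check $d_{ik}^2\le d_{ij}^2+d_{jk}^2$ on each of the four triangles, with excesses $2(b,c,d+t)$, $2(b,c,e+t)$, $2(d,e,b+t)$, $2(d,e,c+t)$ exactly as in the paper, and then take square roots. Your remark that each triangle's squared sides have the form $x+y$, $y+z$, $x+z$ with $x,y,z\ge0$ is a neat way to package the same computation.
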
\begin{proof}
  It is sufficient to check the triangle inequality, by direct computation.  Indeed,  for the four sets of indices  
$ijk= 012,013,023,123$, the unordered triples of quantities
$d_{ij}^2+d_{ik}^2-d_{jk}^2$ are respectively
\[
 2(b,c,d+t),\qquad 2(b,c,e+t),\qquad
 2(d,e,b+t),\qquad 2(d,e,c+t),
\]
and are nonnegative.  Hence,
$d_{ij}^2\le d_{ik}^2+d_{jk}^2$.  Taking square roots  gives
$d_{ij}\le d_{ik}+d_{jk}$; thus the positive square roots satisfy all
triangle inequalities. 
 \end{proof}
For $d\in \mathrm{MET}_4$, we let
\[
 \Gamma(d)=\begin{pmatrix}
 d_{01}&(d_{01}+d_{02}-d_{12})/2&(d_{01}+d_{03}-d_{13})/2\\
 (d_{01}+d_{02}-d_{12})/2&d_{02}&(d_{02}+d_{03}-d_{23})/2\\
(d_{01}+d_{03}-d_{13})/2&(d_{02}+d_{03}-d_{23})/2&d_{03}
 \end{pmatrix}
\]
and 
\[\Gamma(d^{\circ2})=\begin{pmatrix}
 d_{01}^2&(d_{01}^2+d_{02}^2-d_{12}^2)/2&(d_{01}^2+d_{03}^2-d_{13}^2)/2\\
 (d_{01}^2+d_{02}^2-d_{12}^2)/2&d_{02}^2&(d_{02}^2+d_{03}^2-d_{23}^2)/2\\
(d_{01}^2+d_{03}^2-d_{13}^2)/2&(d_{02}^2+d_{03}^2-d_{23}^2)/2&d_{03}^2
 \end{pmatrix}.
\]

We will denote $\mathsf G=\Gamma(d^{\circ2})$, $\mathsf H=\Gamma(d)$, where $d$ is as in \eqref{metric}.

Substituting the $c_i$ in the definition of $\Gamma(d )$, 
$\Gamma(d^{\circ2})$ gives the concrete matrices 
\[
 \mathsf H=
 \begin{pmatrix}
 \sqrt{c_0}&\frac{\sqrt{c_0}+\sqrt{c_2}-\sqrt{c_5}}{2}&\frac{\sqrt{c_0}+\sqrt{c_6}-\sqrt{c_3}}{2}\\
\frac{\sqrt{c_0}+\sqrt{c_2}-\sqrt{c_5}}{2}&\sqrt{c_2}&\frac{\sqrt{c_2}+\sqrt{c_6}-\sqrt{c_1}}{2}\\
 \frac{\sqrt{c_0}+\sqrt{c_6}-\sqrt{c_3}}{2}&\frac{\sqrt{c_2}+\sqrt{c_6}- \sqrt{c_1}}{2}&\sqrt{c_6}
 \end{pmatrix} 
\] and 
\[
 \mathsf G=
 \begin{pmatrix}
 c_0&\frac{c_0+c_2-c_5}{2}&\frac{c_0+c_6-c_3}{2}\\
\frac{c_0+c_2-c_5}{2}&c_2&\frac{c_2+c_6-c_1}{2}\\
 \frac{c_0+c_6-c_3}{2}&\frac{c_2+c_6-c_1}{2}&c_6
 \end{pmatrix}=
 \begin{pmatrix}
 b+c&b&b\\
 b&b+d+t&b+t\\
 b&b+t&b+e+t
 \end{pmatrix}.
\]
Notice that $\mathsf G$ has the positive-semidefinite rank-one decomposition
\[
 \mathsf G=b(1,1,1)^{\mathsf T}(1,1,1)+c e_1e_1^{\mathsf T}+d e_2e_2^{\mathsf T}
 +e e_3e_3^{\mathsf T}
 +t(0,1,1)^{\mathsf T}(0,1,1).
\]
Computing the determinant of $\mathsf G$ we have
\[
\begin{aligned}
\det\mathsf G=&\frac14\Big(
-c_0^2c_1-c_0c_1^2
+c_0c_1c_2+c_0c_1c_3+c_0c_1c_5+c_0c_1c_6+c_0c_2c_3\\
&\quad\, -c_0c_2c_5-c_0c_3c_6+c_0c_5c_6
+c_1c_2c_3-c_1c_2c_6-c_1c_3c_5+c_1c_5c_6\\
&\quad\, -c_2^2c_3-c_2c_3^2+c_2c_3c_5+c_2c_3c_6
+c_2c_5c_6+c_3c_5c_6-c_5^2c_6-c_5c_6^2
\Big)\\
=&\, bcd+bce+bde+bdt+bet+cde+cdt+cet\\
=&\, D(\rho).
\end{aligned}
\]
Direct differentiation gives
\[\frac{\partial \det\mathsf G}{\partial c_i}= q_i\] 
for all $0\le i\le6$, with $i\neq 4$.  Note also that there holds 
\[
 \mathsf H=\Gamma(d)=\sum_{i\in 
\{0,1,2,3,5,6\}} \sqrt{c_i}\,
 \frac{\partial\mathsf G}{\partial c_i}.
\]
Jacobi's formula, in its polynomial form gives for every $i\neq 4$
\[ \sqrt{c_i}\,\frac{\partial \det\mathsf G}{\partial c_i} = \sqrt{c_i}\tr\left(\adj \mathsf G \ \frac{\partial\mathsf G}{\partial c_i}\right)=\tr\left(\adj \mathsf G \  \sqrt{c_i}\frac{\partial\mathsf G}{\partial c_i}\right)\] where $\adj \mathsf G$ is the transpose of the cofactor matrix of $ \mathsf G$, that is $(\det\mathsf G) \mathsf G^{-1}$.
Therefore we get 
\begin{equation}\label{eq:facet-structure}
 \det\mathsf G=D(\rho),\qquad
  N(\rho)= \sum_{i=0}^6q_i\sqrt{c_i}
 =\tr(\adj\mathsf G\,\mathsf H)=\det\mathsf G \tr( \mathsf G^{-1/2}\,\mathsf H\mathsf G^{-1/2}).
\end{equation}
Recalling \eqref{eq:closed-F}, the isoperimetric quotient can be expressed as 
\begin{equation}\label{nuovaF}F(\rho)= 2 \det\mathsf G ^{1/6} \tr( \mathsf G^{-1/2}\,\mathsf H\mathsf G^{-1/2}).\end{equation}
\subsection{A determinant inequality}
We provide now a general result relating the determinant of matrices $\Gamma(d)$ and $\Gamma(d^{\circ2})$ for $d\in \mathrm{MET}_4$.

First of all we introduce the notion of cut cone. 
We call a nonempty proper subset of $\{0,1,2,3\}$ a cut and identify a cut
with its complement.  Thus there are four singleton cuts and three $2+2$
cuts.  

\begin{definition}[cut cone]
The cut metric associated to a cut $S$ is  the semimetric defined as 
\[\delta_S(i,j):=\begin{cases}0 & i, j\in S\\ 0 & i,j\in S^c=\{0,1,2,3\}\setminus S\\ 1&\text{elsewhere}. \end{cases} \]
The cut cone is the conic hull
\[
 \mathrm{CUT}_4
 =\operatorname{cone}\{\delta_S:
 S\subsetneq\{0,1,2,3\},\, S\ne \emptyset\}
 \subset\mathrm{MET}_4.
\]
\end{definition} 
 
We recall the following result (see \cite[Chapter~4]{DezaLaurent1997}).
\begin{proposition}
There holds 
\[\mathrm{MET}_4=\mathrm{CUT}_4.\] 
In particular, any semimetric in the metric cone is a nonnegative
combination of the  seven cut metrics.  
\end{proposition}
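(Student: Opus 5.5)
We need to prove $\mathrm{MET}_4=\mathrm{CUT}_4$. Since each cut semimetric $\delta_S$ satisfies the triangle inequalities and $\mathrm{MET}_4$ is a convex cone, the inclusion $\mathrm{CUT}_4\subseteq\mathrm{MET}_4$ is immediate. The work is the reverse inclusion: every $d\in\mathrm{MET}_4$ must be written explicitly as a nonnegative combination of the seven cut metrics.

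For the $2+2$ cuts we use the three perfect-matching sums
\[
 s_4=d_{01}+d_{23},\qquad s_5=d_{02}+d_{13},\qquad s_6=d_{03}+d_{12}.
\]
By the $S_4$-symmetry of both cones we may relabel so that $s_4\le s_5\le s_6$. We look for $d=\sum_{i=0}^{3}\alpha_i\delta_{\{i\}}+\sum_{j=4}^{6}\beta_j\delta_{S_j}$, where $S_4=\{0,1\}$, $S_5=\{0,2\}$, $S_6=\{0,3\}$.

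The plan is to put weight only on the two $2+2$ cuts whose matchings are not the smallest. Set $\beta_4=0$, $\beta_5=\tfrac12(s_6-s_4)$ and $\beta_6=\tfrac12(s_5-s_4)$; both are nonnegative by the ordering. The remainder $d'=d-\beta_5\delta_{S_5}-\beta_6\delta_{S_6}$ has all three perfect-matching sums equal to $s_4$. We then claim that $d'$ is a sum of singleton cuts with
\[
 \alpha_i=\tfrac12\,(d'_{ij}+d'_{ik}-d'_{jk})
\]
for any $j,k\ne i$. This expression is well defined exactly because the matching sums of $d'$ coincide, which one checks by a direct linear computation. The identity $\alpha_i+\alpha_j=d'_{ij}$ then also follows linearly.

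The main obstacle is nonnegativity of the $\alpha_i$: the subtracted cuts may spoil the triangle inequalities of $d'$. We compute $\alpha_i$ back in terms of $d$. For example, $\alpha_0=\tfrac12(d'_{01}+d'_{02}-d'_{12})$, where $d'_{01}=d_{01}-\beta_5-\beta_6$, $d'_{02}=d_{02}-\beta_6$ and $d'_{12}=d_{12}-\beta_5$. This gives
\[
 \alpha_0=\tfrac12\bigl(d_{01}+d_{02}-d_{12}-\beta_6\bigr).
\]
Substituting $\beta_6$ and simplifying turns this into a half-sum of two triangle-inequality defects of $d$. These should be of the form $d_{01}+d_{03}-d_{13}$ and $d_{01}+d_{02}-d_{12}$ (or similar), possibly combined with the ordering inequality $s_5\ge s_4$. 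The same check is repeated for $\alpha_1,\alpha_2,\alpha_3$; by the residual symmetry preserving the ordering, only two of them need genuine verification.

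If a particular $\alpha_i$ fails to reduce this way, the fallback is an extreme-ray argument. $\mathrm{MET}_4$ is a pointed polyhedral cone in $\R^6$ defined by twelve triangle inequalities, so one enumerates its extreme rays: points where five independent inequalities are tight. One then checks that each is a multiple of some $\delta_S$. Either route completes the proof; alternatively, one can simply cite \cite[Chapter~4]{DezaLaurent1997} as the paper does.
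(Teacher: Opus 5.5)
Your strategy of peeling off $2+2$ cuts until the three perfect-matching sums agree, and then reading off the singleton coefficients, is a good one. It would replace the paper's citation of \cite{DezaLaurent1997} with a self-contained argument. As written, however, the key step is oriented the wrong way and fails.

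A singleton cut adds $1$ to each of $s_4,s_5,s_6$, while $\delta_{S_j}$ adds $0$ to $s_j$ and $2$ to the other two matching sums. Hence in any decomposition $d=\sum_i\alpha_i\delta_{\{i\}}+\sum_j\beta_j\delta_{S_j}$ one has $s_j-s_k=2(\beta_k-\beta_j)$. So $\beta_4=0$ forces $s_4\ge s_5,s_6$: the $2+2$ cut that can be dropped is the one whose matching sum is the \emph{largest}, not the smallest.

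Concretely, your remainder satisfies $s'_5=s'_6=s_4$ but $s'_4=s_4-2\beta_5-2\beta_6=3s_4-s_5-s_6$. Its matching sums therefore do not coincide, and the $\alpha_i$ are not well defined. Already $d=\delta_{\{0,1\}}$, where $s_4=0<s_5=s_6=2$, breaks the construction: you get $\beta_5=\beta_6=1$ and $d'_{01}=d'_{23}=-2$. There is also a slip in your $\alpha_0$, which equals $\tfrac12(d_{01}+d_{02}-d_{12})-\beta_6$. The extreme-ray fallback is only announced, not carried out, so as it stands the proposal rests on the same citation as the paper.

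The repair is short. Relabel so that $s_4=\max\{s_4,s_5,s_6\}$ and put $\beta_4=0$, $\beta_5=\tfrac12(s_4-s_5)$, $\beta_6=\tfrac12(s_4-s_6)$. Then all matching sums of $d'$ equal $s_5+s_6-s_4$, your formula for $\alpha_i$ is well defined, and
\[
\alpha_0=\tfrac12(d_{02}+d_{03}-d_{23}),\quad
\alpha_1=\tfrac12(d_{12}+d_{13}-d_{23}),\quad
\alpha_2=\tfrac12(d_{02}+d_{12}-d_{01}),\quad
\alpha_3=\tfrac12(d_{03}+d_{13}-d_{01}).
\]
All four are nonnegative by the triangle inequality; the ordering is used only to get $\beta_5,\beta_6\ge0$.

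Compared with the citation, this explicit route directly yields a decomposition with one vanishing $2+2$ coefficient. That is exactly the normal form \eqref{eqrappr} used in Theorem~\ref{thm:fourpoint}: there the dropped cut is $\{0,3\}$, and indeed $d_{03}+d_{12}=x_0+x_1+x_2+x_3+2u+2v$ is the largest matching sum.
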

Notice that the cut metrics satisfy the identity
\[
 \delta_{\{0\}}+\delta_{\{1\}}+\delta_{\{2\}}+\delta_{\{3\}}
 \equiv\delta_{\{0,1\}}+\delta_{\{0,2\}}+\delta_{\{0,3\}}.
\]
Starting from any nonnegative cut decomposition, subtract the smallest of the
three $2+2$ coefficients from all three of them and add the same quantity to
each singleton coefficient.  The displayed identity shows that the metric is
unchanged, while one $2+2$ coefficient becomes zero.  

We shall also use that the two determinants appearing below,
$\det\Gamma(d)$ and $\det\Gamma(d^{\circ2})$, are invariant
under relabelling of the four points.

After a permutation of the four points, we may write
for every $d\in \mathrm{MET}_4 $ 
\begin{equation}\label{eqrappr}
 d=\sum_{i=0}^3x_i\delta_{\{i\}}+u\delta_{\{0,1\}}+v\delta_{\{0,2\}},
 \qquad x_i,u,v\ge0. 
\end{equation}
 
In these coordinates the six distances are
\begin{align*}
 d_{01}&=x_0+x_1+v, & d_{02}&=x_0+x_2+u,\\
 d_{03}&=x_0+x_3+u+v, & d_{12}&=x_1+x_2+u+v,\\
 d_{13}&=x_1+x_3+u, & d_{23}&=x_2+x_3+v.
\end{align*}

We have the following theorem. 
\begin{theorem}\label{thm:fourpoint}
For every semimetric $d\in\mathrm{MET}_4$,
\begin{equation}\label{eq:fourpoint}
 2\big(\det\Gamma(d)\big)^2\ge\det\Gamma(d^{\circ2}),
\end{equation}
where $(d^{\circ2})_{ij}=d^2_{ij}$.
If $\det\Gamma(d)>0$, equality holds if and only if the six distances are
equal.
\end{theorem}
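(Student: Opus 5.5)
Throughout, write $\Gamma(d)_{ij}=\tfrac12(d_{0i}+d_{0j}-d_{ij})$ for $i,j\in\{1,2,3\}$; this is the Gromov product at the base point $0$. Both sides of \eqref{eq:fourpoint} are invariant under relabelling of the four points, as noted before the statement. So the plan is to work in the normalized cut coordinates \eqref{eqrappr}, $d=\sum_i x_i\delta_{\{i\}}+u\delta_{\{0,1\}}+v\delta_{\{0,2\}}$ with $x_i,u,v\ge0$, and reduce \eqref{eq:fourpoint} to a polynomial inequality in six nonnegative variables.

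\textbf{Step 1: $\Gamma(d)$ via Cauchy--Binet.} For a cut metric $\delta_S$, the Gromov product at $0$ of two points $i,j$ equals $1$ exactly when both $i$ and $j$ lie on the side of $S$ not containing $0$, and $0$ otherwise. Hence $\Gamma(\delta_S)=w_Sw_S^{\mathsf T}$, where $w_S\in\{0,1\}^3$ is the indicator of that side. Since $\Gamma$ is linear in $d$, this gives
\[
\Gamma(d)=x_0(1,1,1)^{\mathsf T}(1,1,1)+x_1e_1e_1^{\mathsf T}+x_2e_2e_2^{\mathsf T}+x_3e_3e_3^{\mathsf T}+u(0,1,1)^{\mathsf T}(0,1,1)+v(1,0,1)^{\mathsf T}(1,0,1).
\]
This is the same rank-one structure as the decomposition of $\mathsf G$ above. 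By Cauchy--Binet, $\det\Gamma(d)$ is the sum, over triples of these six vectors, of the product of the three weights times the squared $3\times3$ determinant. It is therefore an explicit cubic with nonnegative integer coefficients (all squared determinants lie in $\{0,1,4\}$). For example, the $x_1x_2x_3$ term has coefficient $1$ and the $x_0uv$ term has coefficient $4$.

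\textbf{Step 2: $\Gamma(d^{\circ 2})$.} The entries of $\Gamma(d^{\circ2})$ are obtained by substituting the six displayed distances $d_{ij}$ (linear in $x,u,v$) into $\tfrac12(d_{0i}^2+d_{0j}^2-d_{ij}^2)$. Alternatively, one can write $d^{\circ2}=\sum_{S,T}w_Sw_T\,\delta_S\delta_T$ and note that each product $\delta_S\delta_T$ is again a $\{0,1\}$-valued function on pairs. Either way, $P:=2(\det\Gamma(d))^2-\det\Gamma(d^{\circ2})$ is a homogeneous polynomial of degree $6$ in $(x_0,x_1,x_2,x_3,u,v)$.

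The configuration with all distances equal is $x_0=\dots=x_3=1$, $u=v=0$. There $\Gamma(d)=I+J$ with determinant $4$, and $\Gamma(d^{\circ 2})=2(I+J)$ with determinant $32$, so $P=0$. Since $P$ vanishes at this interior point of the $x$-orthant, $P$ cannot have all coefficients nonnegative, and a sum-of-squares type argument is needed.

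\textbf{Step 3: prove $P\ge0$.} This is the main obstacle. I would first treat the slice $u=v=0$. There $d_{ij}=x_i+x_j$ and $P(x,0,0)$ is symmetric in $x_0,\dots,x_3$. I would express it in elementary symmetric functions $e_k(x)$ and prove it by Schur/Maclaurin-type inequalities, or write it as $\sum_{i<j}(x_i-x_j)^2R_{ij}(x)$ with $R_{ij}$ having nonnegative coefficients. Next I would expand $P$ in powers of $u$ and $v$. The aim is to show that every coefficient of $u^\alpha v^\beta$ with $\alpha+\beta\ge1$ is either coefficientwise nonnegative in $x$, or can be absorbed by AM--GM into the $u=v=0$ part together with the leading positive terms, such as the $4x_0uv$ contribution to $\det\Gamma(d)$, which enters squared. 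If a direct grouping fails, I would fall back on a certified SOS decomposition of $P$ on the nonnegative orthant: substitute $x_i=y_i^2$, $u=s^2$, $v=t^2$ and find a Gram matrix by hand, using the symmetry under relabellings that preserve the normalization \eqref{eqrappr} to shrink the search.

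\textbf{Step 4: equality.} From the representation in Step 3, $P=0$ with $\det\Gamma(d)>0$ forces each square term to vanish. This gives $x_0=x_1=x_2=x_3$. It also forces $u=v=0$, because the $u$- and $v$-dependent remainders are strictly positive once $\det\Gamma(d)>0$. Thus all six distances coincide. Conversely, the computation in Step 2 shows that equal distances give equality.
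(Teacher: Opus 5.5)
Your Steps 1--2 match the paper's setup: relabel to the normalized decomposition \eqref{eqrappr}, write $\Gamma(d)$ as a sum of six rank-one terms, and reduce \eqref{eq:fourpoint} to nonnegativity of the residual $R=2(\det\Gamma(d))^2-\det\Gamma(d^{\circ2})$. But Step 3 is the whole content of the theorem, and it is only a menu of strategies: symmetric-function inequalities, ``absorption by AM--GM'', and a hand-built SOS certificate for a degree-$12$ polynomial in six variables. None of these is carried out. The missing ingredient is an explicit grouping of $R$ into visibly nonnegative pieces. The paper's identity is
\begin{multline*}
R=2\sum_{i<j}(M_i-M_j)^2+\Lambda_u u+\Lambda_v v+2\bigl(Uu^2+2Wuv+Vv^2\bigr)\\
+B_{21}u^2v+B_{12}uv^2+2u^2v^2\bigl(2(u+v)^2+4s(u+v)+3s^2\bigr).
\end{multline*}
Here $M_i=\prod_{j\ne i}x_j$, $\pi=x_0x_1x_2x_3$, $s=x_0+x_1+x_2+x_3$, and
$U=((x_0+x_1)(x_2+x_3))^2-8\pi$, $V=((x_0+x_2)(x_1+x_3))^2-8\pi$, $W=(x_0x_3+x_1x_2)^2-8\pi$.
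The coefficients $\Lambda_u=4[x_0x_1(x_0+x_1)(x_2-x_3)^2+x_2x_3(x_2+x_3)(x_0-x_1)^2]$, $\Lambda_v$, $B_{21}$, $B_{12}$ are explicit and nonnegative on the orthant.

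Your hoped-for structure does not hold as stated. First, $\Lambda_u$ has negative monomial coefficients and vanishes when all $x_i$ are equal, so it must be written with squared differences rather than checked coefficientwise. Second, and more seriously, the $uv$-coefficient $4W$ is genuinely negative: $4W=-16x^4$ when all $x_i=x$. That is exactly where the $u=v=0$ part vanishes, so this coefficient cannot be absorbed into the $u=v=0$ part. It must instead be dominated by the $u^2$ and $v^2$ coefficients. The paper does this via AM--GM: $U,V\ge8\pi$ and $W\ge-4\pi$, hence $Uu^2+2Wuv+Vv^2\ge8\pi(u^2-uv+v^2)\ge0$.

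Step 1 also contains an error that undermines your absorption plan: the $x_0uv$ coefficient of $\det\Gamma(d)$ is $1$, not $4$. The value $4$ arises only from the triple of all three double-cut vectors, and the normalization \eqref{eqrappr} has removed one of them. For the six vectors $(1,1,1)^{\mathsf T},e_1,e_2,e_3,(0,1,1)^{\mathsf T},(1,0,1)^{\mathsf T}$, every nonzero $3\times3$ minor is $\pm1$, so
$\det\Gamma(d)=e_3(x)+u(x_0+x_1)(x_2+x_3)+v(x_0+x_2)(x_1+x_3)+uvs$.
At $x_i=u=v=1$ this gives $16$, which direct computation of $\Gamma(d)$ confirms; a coefficient $4$ would give $19$.

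Finally, Step 4 depends on the missing representation and skips two points. The vanishing of $u,v$ must be derived: the $u^2v^2$ term forces $uv=0$; then, say, $u>0$ forces $U=0$, i.e.\ $x_0=x_1=0$ or $x_2=x_3=0$, which makes $\det\Gamma(d)=0$. And at $u=v=0$ the squares only give $M_0=M_1=M_2=M_3$. You need $\det\Gamma(d)=e_3(x)=\sum_iM_i>0$ to conclude that all $x_i$ are positive and hence equal.
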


\begin{proof}
Let $x_0,\ldots,x_3,u,v$ be as in \eqref{eqrappr}. Set
\[
s:=x_0+x_1+x_2+x_3,
\qquad
\pi:=x_0x_1x_2x_3,
\qquad
M_i:=\prod_{j\ne i}x_j,
\quad 0\le i\le3.
\]
For a cut $S\subset\{0,1,2,3\}$, set
\[
r_S:=
\bigl(
\delta_S(0,1),
\delta_S(0,2),
\delta_S(0,3)
\bigr)^{\mathsf T}.
\]
Since
\[
\delta_S(i,j)
=
\delta_S(0,i)+\delta_S(0,j)
-2\delta_S(0,i)\delta_S(0,j),
\]
the definition of $\Gamma(\delta_S)$ gives
\[
\Gamma(\delta_S)=r_Sr_S^{\mathsf T}.
\]
For the six cuts appearing in \eqref{eqrappr}, the corresponding vectors are
\[
\begin{aligned}
r_{\{0\}}&=(1,1,1)^{\mathsf T},&
r_{\{1\}}&=(1,0,0)^{\mathsf T},&
r_{\{2\}}&=(0,1,0)^{\mathsf T},\\
r_{\{3\}}&=(0,0,1)^{\mathsf T},&
r_{\{0,1\}}&=(0,1,1)^{\mathsf T},&
r_{\{0,2\}}&=(1,0,1)^{\mathsf T}.
\end{aligned}
\]
Hence, by linearity of $\Gamma$,
\[
\Gamma(d)
=
\sum_{i=0}^3 x_i r_{\{i\}}r_{\{i\}}^{\mathsf T}
+u\,r_{\{0,1\}}r_{\{0,1\}}^{\mathsf T}
+v\,r_{\{0,2\}}r_{\{0,2\}}^{\mathsf T}.
\]
Thus $\Gamma(d)=BB^{\mathsf T}$, where
\[
B:=
\begin{pmatrix}
\sqrt{x_0}\,r_{\{0\}}&
\sqrt{x_1}\,r_{\{1\}}&
\sqrt{x_2}\,r_{\{2\}}&
\sqrt{x_3}\,r_{\{3\}}&
\sqrt{u}\,r_{\{0,1\}}&
\sqrt{v}\,r_{\{0,2\}}
\end{pmatrix}.
\]
and
\[
\det\Gamma(d)
=
\sum_{\substack{I\subset\{1,\ldots,6\}\\ |I|=3}}
(\det B_I)^2.
\]
The nonzero $3\times3$ minors of the unweighted column configuration
have determinant of absolute value one. Among the minors containing
$r_{\{0,1\}}$, but not $r_{\{0,2\}}$, the nonzero ones correspond to
the products
\[
x_0x_2,\quad x_0x_3,\quad x_1x_2,\quad x_1x_3,
\]
while those containing $r_{\{0,2\}}$, but not $r_{\{0,1\}}$, correspond to
\[
x_0x_1,\quad x_0x_3,\quad x_1x_2,\quad x_2x_3.
\]
All four minors containing both of the last two columns are nonzero.
Thus Cauchy--Binet formula gives
\begin{equation}\label{eq:fourpoint-P}
\begin{aligned}
P:=\det\Gamma(d)
={}&e_3(x)
+u(x_0+x_1)(x_2+x_3)\\
&+v(x_0+x_2)(x_1+x_3)
+uv(x_0+x_1+x_2+x_3)\\
={}&e_3(x)
+u(x_0+x_1)(x_2+x_3)
+v(x_0+x_2)(x_1+x_3)
+uvs.
\end{aligned}
\end{equation}

Set now
\[
R:=2P^2-\det\Gamma(d^{\circ2})
\]
and define
\begin{align*}
U&:=((x_0+x_1)(x_2+x_3))^2-8\pi,\\
V&:=((x_0+x_2)(x_1+x_3))^2-8\pi,\\
W&:=(x_0x_3+x_1x_2)^2-8\pi,
\end{align*}
together with
\begin{align*}
\Lambda_u:=4[&
x_0x_1(x_0+x_1)(x_2-x_3)^2
+x_2x_3(x_2+x_3)(x_0-x_1)^2],\\
\Lambda_v:=4[&
x_0x_2(x_0+x_2)(x_1-x_3)^2
+x_1x_3(x_1+x_3)(x_0-x_2)^2],\\
B_{21}:=4[&
x_0x_2(x_0+x_2)+x_0x_3(x_0+x_3)
+x_1x_2(x_1+x_2)+x_1x_3(x_1+x_3)],\\
B_{12}:=4[&
x_0x_1(x_0+x_1)+x_0x_3(x_0+x_3)
+x_1x_2(x_1+x_2)+x_2x_3(x_2+x_3)].
\end{align*}

Using
\[
\det
\begin{pmatrix}
a&p&q\\
p&b&r\\
q&r&c
\end{pmatrix}
=
abc+2pqr-ar^2-bq^2-cp^2
\]
and collecting the terms with the same powers of $u$ and $v$, we obtain
\begin{align}\label{eq:fourpoint-residual}
R={}&
2\sum_{i<j}(M_i-M_j)^2
+\Lambda_u u+\Lambda_v v
+2(Uu^2+2Wuv+Vv^2)\notag\\
&+B_{21}u^2v+B_{12}uv^2
+2u^2v^2
\bigl(2(u+v)^2+4s(u+v)+3s^2\bigr).
\end{align}

All the terms on the right-hand side are manifestly nonnegative,
except a priori the quadratic form
$Uu^2+2Wuv+Vv^2.$
Indeed,
\[
(x_0+x_1)(x_2+x_3)
\ge
4(x_0x_1x_2x_3)^{1/2}
=4\sqrt{\pi},
\]
and similarly
\[
(x_0+x_2)(x_1+x_3)\ge4\sqrt{\pi}.
\]
Hence
\[
U\ge8\pi,
\qquad
V\ge8\pi.
\]
Moreover,
\[
x_0x_3+x_1x_2
\ge2\sqrt{x_0x_1x_2x_3}
=2\sqrt{\pi},
\]
and therefore
\[
W\ge-4\pi.
\]
Since $u,v\ge0$, it follows that
\[
\begin{aligned}
Uu^2+2Wuv+Vv^2
&\ge
8\pi u^2-8\pi uv+8\pi v^2\\
&=
8\pi(u^2-uv+v^2)
\ge0.
\end{aligned}
\]
Thus every term in \eqref{eq:fourpoint-residual} is nonnegative, and hence
\[
2\bigl(\det\Gamma(d)\bigr)^2
\ge
\det\Gamma(d^{\circ2}).
\]

\smallskip

We now discuss the equality case. Assume $P=\det\Gamma(d)>0$ and $R=0$.
The last term in \eqref{eq:fourpoint-residual} forces
\[
uv=0.
\]
Suppose, for instance, that $u>0$ and $v=0$. Since all terms in
\eqref{eq:fourpoint-residual} are nonnegative, equality implies $U=0$.
But
\[
U
=
(x_0^2+x_1^2)(x_2+x_3)^2
+2x_0x_1(x_2-x_3)^2.
\]
Thus either $x_0=x_1=0$ or $x_2=x_3=0$. In either case
\eqref{eq:fourpoint-P} gives $P=0$, a contradiction.
The case $v>0$ and $u=0$ is identical, using
\[
V
=
(x_0^2+x_2^2)(x_1+x_3)^2
+2x_0x_2(x_1-x_3)^2.
\]
Therefore
\[
u=v=0.
\]

Equality in \eqref{eq:fourpoint-residual} then yields
\[
M_0=M_1=M_2=M_3.
\]
Since $u=v=0$,
\[
P=e_3(x)=M_0+M_1+M_2+M_3>0.
\]
Hence their common value is $P/4>0$, so in particular every $M_i$ is
positive. It follows that
\[
x_0,x_1,x_2,x_3>0.
\]
Writing
\[
M_i=\frac{\pi}{x_i},
\]
the equality of the four $M_i$ implies
\[
x_0=x_1=x_2=x_3.
\]
Since $u=v=0$, the six distances in \eqref{eqrappr} are therefore equal.

Conversely, if all six distances are equal to $m>0$, then, writing
$\mathbf e$ for the equilateral metric on four points,
\[
 \Gamma(d)=m\Gamma(\mathbf e),
 \qquad
 \Gamma(d^{\circ2})=m^2\Gamma(\mathbf e),
 \qquad
 \det\Gamma(\mathbf e)=\frac12.
\]
Hence equality in \eqref{eq:fourpoint} follows directly.
\end{proof}

\subsection{Main result}  
We are now ready to prove the $\FCC$ theorem.  
    
\begin{theorem}\label{thm:fivefacet}

For every nondegenerate point $(0,b,c,d,e,t)$,
\[
 F(0,b,c,d,e,t)\ge3\,2^{5/6}.
\]
Equality holds if and only if $(0,b,c,d,e,t)=(0,m,m,m,m,0)$ for some $m>0$.
\end{theorem}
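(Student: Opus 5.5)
Starting from the representation \eqref{nuovaF}, $F(\rho)=2(\det\mathsf G)^{1/6}\tr(\mathsf G^{-1/2}\mathsf H\mathsf G^{-1/2})$, valid because $a=0$ forces $q_4=0$, the idea is to lower-bound the trace by a determinant and then apply Theorem~\ref{thm:fourpoint} to the semimetric $d$ of \eqref{metric}. By the preceding Lemma, $d\in\mathrm{MET}_4=\mathrm{CUT}_4$. Hence $\mathsf H=\Gamma(d)$ is a nonnegative combination of rank-one matrices $r_Sr_S^{\mathsf T}$, so it is positive semidefinite. Nondegeneracy gives $\det\mathsf G=D(\rho)>0$, so $\mathsf G$ is positive definite. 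The matrix $\mathsf G^{-1/2}\mathsf H\mathsf G^{-1/2}$ is then positive semidefinite, and by AM--GM on its eigenvalues
\[
 \tr(\mathsf G^{-1/2}\mathsf H\mathsf G^{-1/2})\ge 3\Big(\frac{\det\mathsf H}{\det\mathsf G}\Big)^{1/3}.
\]

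Next, Theorem~\ref{thm:fourpoint} applied to $d$ gives $\det\mathsf H=\det\Gamma(d)\ge(\det\Gamma(d^{\circ2})/2)^{1/2}=(\det\mathsf G/2)^{1/2}$. Substituting,
\[
 F(\rho)\ge 6(\det\mathsf G)^{1/6}(\det\mathsf G)^{-1/3}(\det\mathsf G)^{1/6}2^{-1/6}=6\cdot 2^{-1/6}=3\,2^{5/6}.
\]
The powers of $\det\mathsf G$ cancel exactly, which is the expected scale invariance. This proves the inequality.

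For the equality case, both inequalities must be equalities. Since $\det\mathsf H\ge(\det\mathsf G/2)^{1/2}>0$, the equality clause of Theorem~\ref{thm:fourpoint} applies, so all six distances coincide:
\[
 c_0=c_1=c_2=c_3=c_5=c_6.
\]
With the explicit list of cut weights for $a=0$, namely $b+c=d+e=b+d+t=c+e+t=c+d+t=b+e+t$, we get the following. From $c_2=c_6$ we obtain $d=e$, and from $c_2=c_5$ we obtain $b=c$. Then $b+c=b+d+t$ gives $b=d+t$, and $d+e=b+d+t$ gives $d=b+t$. Adding these yields $t=0$, and so $b=c=d=e=m$. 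This is exactly the FCC point. Conversely, at $(0,m,m,m,m,0)$ direct evaluation gives $F_{\FCC}=3\,2^{5/6}$. One could also note that the equal-eigenvalue condition ($\mathsf H$ proportional to $\mathsf G$) holds automatically there, but this is not needed, since we already know the value.

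The main obstacle I expect is justifying the positivity and semidefiniteness hypotheses cleanly rather than the algebra. That $\mathsf H\succeq0$ uses the cut decomposition of $d$, and the strict positivity $\det\mathsf H>0$ needed for the equality clause comes from the four-point inequality itself. Also, \eqref{nuovaF} was derived only after a tetrahedral relabelling putting the zero parameter at $a$. Since $F$ is $S_4$-invariant, this covers every boundary point with exactly one or more vanishing parameters (with $t$ allowed to be zero). Points with two or more zeros are also covered by Proposition~\ref{prop:four-generators}, consistent with the conclusion.
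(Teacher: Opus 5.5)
Your proposal is correct and follows the paper's proof essentially step for step. Like the paper, you use the representation \eqref{nuovaF}, AM--GM on the eigenvalues of $\mathsf G^{-1/2}\mathsf H\mathsf G^{-1/2}$, Theorem~\ref{thm:fourpoint} to obtain $2(\det\mathsf H)^2\ge\det\mathsf G$, and its equality clause to force all six $c_i$ ($i\ne 4$) to coincide; your extra justifications ($\mathsf H\succeq 0$ via the cut decomposition, $\det\mathsf H>0$ to trigger the equality clause) and your different choice of equalities among the $c_i$ are details the paper leaves implicit or handles equivalently.
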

\begin{proof}
Let $\mu_1,\mu_2,\mu_3\ge0$ be the eigenvalues of
$\mathsf G^{-1/2}\mathsf H\mathsf G^{-1/2}$.  Then
\[\tr \mathsf G^{-1/2}\mathsf H\mathsf G^{-1/2}=\mu_1+\mu_2+\mu_3\quad \text{and}\quad \det\mathsf H=(\det\mathsf G) \mu_1\mu_2\mu_3.\]
Therefore from \eqref{nuovaF} and from the inequality  
$\mu_1+\mu_2+\mu_3\geq 3 \sqrt[3]{\mu_1\mu_2\mu_3}$ 
we get
\[F(\rho)=2\det\mathsf G^{1/6}(\mu_1+\mu_2+\mu_3)\geq 6 \det\mathsf G^{1/6}  \sqrt[3]{\mu_1\mu_2\mu_3
}=6 \left(\frac{(\det\mathsf H)^2}{\det\mathsf G}\right)^{1/6}.\ \]
Theorem~\ref{thm:fourpoint} yields
$2(\det\mathsf H)^2\ge\det\mathsf G $, so, substituting in the previous inequality we get 
\[F(\rho)\geq 6 \left(\frac{(\det\mathsf H)^2}{\det\mathsf G}\right)^{1/6}\geq 3 \ 2^{5/6}=F_{\FCC}. \] This gives the minimality of $\FCC$. 
 
If equality holds in the final estimate, then equality holds both in the
spectral arithmetic--geometric mean and in Theorem~\ref{thm:fourpoint}.
Since $D(\rho)=\det\mathsf G>0$, the latter theorem forces all six metric lengths
$d_{ij}$, and hence all six $c_i$, $i\neq 4$ to agree. The relations
$c_3=c_5$, $c_3=c_6$, $c_0=c_1$, and $c_1=c_2$ imply 
$b=c=d=e$ and $t=0$.  Conversely, on the FCC ray all six $c_i$, for $i\neq 4$,
are equal and $\mathsf H$ is a positive scalar multiple of $\mathsf G$, so
there is equality in both constituent inequalities.
\end{proof}


\begin{theorem}[FCC minimality]\label{thm:FCC}
Let $\Vor(\Lambda)$ be the Voronoi cell of a full-rank lattice in $\R^3$.
If $\Vor(\Lambda)$ has at most twelve facets, then
\begin{equation}\label{eq:FCC-main}
\Iso(\Vor(\Lambda))=F(\rho)\ge3\,2^{5/6}.
\end{equation}
Equality holds if and only if $\Vor(\Lambda)$ is similar to the regular rhombic
dodecahedron, equivalently if and only if the lattice is similar to FCC.
\end{theorem}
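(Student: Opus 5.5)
The plan is to assemble Theorem~\ref{thm:FCC} from the results already established, by reducing to Selling coordinates and splitting according to the number of vanishing parameters. First, by Proposition~\ref{thm:selling-reduction}, $\Lambda$ admits an obtuse superbase with Selling vector $\rho\in\mathbb R^6_{\ge0}$ and $D(\rho)>0$, and by Proposition~\ref{prop:surface-formula} we have $\Iso(\Vor(\Lambda))=F(\rho)$. By Corollary~\ref{cor:twelve-facets-boundary}, the hypothesis that $\Vor(\Lambda)$ has at most twelve facets is equivalent to $\min_{ij}\rho_{ij}=0$. So it suffices to prove $F(\rho)\ge 3\,2^{5/6}$ on the nondegenerate boundary of the Selling cone, and to identify the equality cases there.

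I would split the boundary into two cases. If at least two Selling parameters vanish, then at most four are positive, and Proposition~\ref{prop:four-generators} gives the bound, with equality exactly on the $S_4$-orbit of $(0,m,m,m,m,0)$. If exactly one parameter vanishes, I would use the $S_4$ action by relabelling the vertices of $K_4$. It acts transitively on the six edges and leaves $F$, $D$ and the lattice unchanged, so we may assume the vanishing parameter is $a=\rho_{01}$. Theorem~\ref{thm:fivefacet} then gives $F(0,b,c,d,e,t)\ge 3\,2^{5/6}$, with $t>0$ here. Its equality case forces $t=0$, which is excluded, so the inequality is strict in this case. In fact Theorem~\ref{thm:fivefacet} already covers $t=0$ as well, so the two cases overlap harmlessly.

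For the equality statement, suppose equality holds. Then by the two cases above, $\rho$ lies in the $S_4$-orbit of $(0,m,m,m,m,0)$ with $m>0$. Relabelling the superbase does not change the lattice, so $\Lambda$ is isometric to the lattice with Selling vector $(0,m,m,m,m,0)$, which is a rescaling of $(0,1,1,1,1,0)$. By the converse part of Proposition~\ref{thm:selling-reduction}, this is FCC up to similarity. Its Voronoi cell is the regular rhombic dodecahedron, by Proposition~\ref{prop:voronoi-zonotope} and the value $F_\FCC=F(0,1,1,1,1,0)$. Conversely, if the lattice is similar to FCC, then $F$ equals $F_\FCC$ by scale invariance.

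The one point needing care is the equivalence between ``$\Vor(\Lambda)$ similar to the regular rhombic dodecahedron'' and ``$\Lambda$ similar to FCC''. A lattice is determined by its Voronoi cell, since the lattice is the translation group of the face-to-face tiling by the cell, and similarities commute with this construction. So the two conditions are equivalent. Apart from that, the argument is bookkeeping: the analytic difficulty has already been absorbed into Theorem~\ref{thm:fourpoint} and the Jo\'os--L\'angi result.
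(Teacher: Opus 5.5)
Your proposal is correct and follows the paper's proof. Corollary~\ref{cor:twelve-facets-boundary} reduces the problem to the nondegenerate boundary of the Selling cone, Proposition~\ref{prop:four-generators} handles the strata with at most four active parameters, and Theorem~\ref{thm:fivefacet} (after $S_4$ relabelling) handles the five-generator facets; its equality case forces $t=0$, so the two equality classifications meet exactly on the FCC orbit. Your extra remark that a lattice is determined by its Voronoi cell (e.g.\ from the facet normals and support values) only makes explicit the equivalence between the two forms of the equality statement, which the paper leaves implicit.
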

\begin{proof}
Corollary~\ref{cor:twelve-facets-boundary} identifies the stated class with the
nondegenerate boundary of the Selling cone.  Proposition~\ref{prop:four-generators}
handles strata with at most four active parameters, and
Theorem~\ref{thm:fivefacet} handles the six open five-generator facets.
Their equality classifications agree exactly on the FCC orbit.
\end{proof}

\section{The BCC theorem}\label{sec:global-close}
The aim of this section is to prove the sharp determinant inequality  \eqref{eq:sharpdet} that, as we noted at the end of Subsection~\ref{esplicita}, 
controls the interior of the Selling cone and singles out the BCC ray.  

We do
not minimize the  determinant $\cJ(\rho)$ directly in the six Selling
coordinates.  Instead, the seven cut directions introduced above  in \eqref{zi} allow us to
separate the argument into three steps.  

First, by Cauchy--Binet we rewrite  the
determinant $\cJ$ as a sum over the $29$ nonzero bases of these directions;
grouping the bases according to the number of double cuts produces a
four-state lower bound.  Second, the geometry of the positive Selling cone is
encoded by two inequalities for the three double-cut leverage scores: the
product estimate in Lemma~\ref{thm:leverage-product} and the correlation
estimate in Proposition~\ref{thm:weak-correlation}.  Finally, these constraints
reduce the remaining optimization to the scalar estimate of
Theorem~\ref{thm:four-state-scalar}.  

In this way, we separate the algebraic
steps   and the  final optimization result.

\subsection{Determinantal probability measures}  \label{sec:compact-entropy}

First of all we show how to associate to the weighted spanning-tree polynomials $(q_i)_{i=0,\dots,6}$ be   defined
in \eqref{qdef} a determinantal probability measure. 
 
Let $z_i$   the seven cut directions   introduced  in \eqref{zi}. 
For every $u\in \mathbb R^7$, let 
\[ A_u=\sum_i u_i z_i z_i^\trans=\begin{pmatrix} u_0+u_1+u_5+u_6& u_0+u_6 &u_0+u_5\\
 u_0+u_6 & u_0+u_2+u_4+u_6 & u_0+u_4\\
 u_0+u_5 &u_0+u_4&u_0+u_3+u_4+u_5
 \end{pmatrix}\] 
 and 
 \begin{equation}
 \cP(u):=\det A_u.\label{eq:Pdef} 
\end{equation}
Note that if $u=(q_i)_{i=0,\dots,6}$  the weighted spanning-tree polynomials defined
in \eqref{qdef}, direct substitution gives the matrix identity
\begin{equation}\label{eq:physical-adjugate-lattice}
A_q=\sum_{i=0}^6 q_i z_i z_i^{\mathsf T}
=\operatorname{adj}A(\rho)
=(\det A(\rho))A(\rho)^{-1}.
\end{equation}

For every $I\subseteq \{0,
\dots, 6\}$, with $|I|=3$, $Z_I$  is  the $3\times3$ matrix whose columns are the vectors $z_i$, $i\in I$, and define 
\[\kappa_I=(\det Z_I)^2.\]
Of the $\binom73=35$ triples $I$, exactly $29$ matrices $Z_I$ have nonzero
determinant.  For $28$ of them the square of the  determinant  is $1$, while
$(\det(z_4,z_5,z_6))^2 =4$.
 
The Cauchy--Binet theorem gives
\begin{equation}\label{cbformula}
 \cP(u)=
\sum_{\substack{I\subset\{0,\ldots,6\}\\ |I|=3}}
 \det(Z_I)^2\prod_{i\in I}u_i=\sum_{\substack{I\subset\{0,\ldots,6\}\\ |I|=3}}
 \kappa_I\prod_{i\in I}u_i.
\end{equation}
    
From the Cauchy-Binet formula above one derives the  explicit  expression 
\begin{align}
\cP(u)={}&e_3(u_0,u_1,u_2,u_3)
+u_4(u_0+u_1)(u_2+u_3)\notag\\
&+u_5(u_0+u_2)(u_1+u_3)
+u_6(u_0+u_3)(u_1+u_2)\notag\\
&+(u_0+u_1+u_2+u_3)e_2(u_4,u_5,u_6)
+4u_4u_5u_6.\label{eq:Pexplicit}
\end{align}

To every fixed $u\in\R^7$ with $u_i\geq 0$ and $\cP(u)>0$ we associate a probability measure on the space $\Omega:=\{I\subset \{0,\ldots,6\}, |I|=3, \kappa_I>0\} $  defined as follows: 
\begin{equation}\label{eq:tau-u}
\tau_u(I)=\frac{\kappa_I\prod_{i\in I}u_i}{\cP(u)},
 \qquad |I|=3,
 \qquad \kappa_I:=\det(Z_I)^2.
\end{equation}
Note that $\tau_u $ is a probability measure due to \eqref{cbformula}.    \(\tau_u\) is the probability law on the
$29$ triples for which \(\kappa_I>0\) measuring    the relative contribution
of each basis to the determinant \(\cP(u)\) and it  is called the finite projection
determinantal measure associated with the rank-three subspace spanned by the weighted columns; see \cite[Sections~2--3]{Lyons2003}.  The one-point
marginals  associated to $\tau_u$ are defined for every $i\in\{0,
\dots, 6\}$ as 
\begin{equation}\label{eq:marginal-u} 
\ell_i(u):=\sum_{I\text{ s.t. } i\in I}\tau_u(I).
\end{equation} It is straightforward to check using the definition that  
\begin{equation}\label{espressionel_i}
\ell_i(u)= \frac{u_i}{\cP(u)}\frac{\partial \cP(u)}{\partial u_i }=u_i\frac{\partial}{\partial u_i}\log\cP(u).\end{equation}
By Jacobi's formula, recalling that
$\operatorname{adj}A_u=\cP(u)A_u^{-1}$ and
$\frac{\partial A_u}{\partial u_i}=z_i z_i^{\mathsf T}$, we obtain
\begin{align}
\label{formula1}
u_i\frac{\partial}{\partial u_i}\cP(u)
&=
u_i\tr\left(
\operatorname{adj}A_u\,
\frac{\partial A_u}{\partial u_i}
\right) \notag\\
&=
u_i\cP(u)\,z_i^{\mathsf T}A_u^{-1}z_i,
\\
\label{formula2}
\ell_i(u)
=u_i\frac{\partial}{\partial u_i}\log\cP(u)
&=u_i z_i^{\mathsf T}A_u^{-1}z_i.
\end{align}

For $u=q= (q_i)_{i=0,\dots,6}$ be the weighted spanning-tree polynomials defined
in \eqref{qdef} we  define $\cP(q)$ as in \eqref{eq:Pdef}, $\tau_q$ as in
\eqref{eq:tau-u}, and denote its one-point marginals by $\ell_i(q)$.
From the computation above, we deduce easily the following  identities: 
\begin{lemma}\label{lem:Pq}
There hold
\begin{equation}\label{eq:Pq}
\cP(q)=\det\operatorname{adj}A(\rho)=D(\rho)^2,
\qquad
\ell_i(q)=\frac{q_i c_i}{D(\rho)},
\qquad
q_i\frac{\partial}{\partial q_i}\cP(q)
=D(\rho)^2\ell_i(q).
\end{equation}
\end{lemma}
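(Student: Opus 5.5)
The three identities all follow from the adjugate identity \eqref{eq:physical-adjugate-lattice}, $A_q=\operatorname{adj}A(\rho)$, combined with Jacobi's formula in the form \eqref{formula2}. I would verify them in order.

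\emph{First identity.} Since $A_q=\operatorname{adj}A(\rho)=D(\rho)A(\rho)^{-1}$ is a $3\times3$ matrix, taking determinants gives
\[
\cP(q)=\det A_q=D(\rho)^3\det A(\rho)^{-1}=D(\rho)^2 .
\]
This uses $D(\rho)>0$. If one wants the identity as a polynomial identity also on the boundary, one notes that $\det\operatorname{adj}A=(\det A)^{2}$ holds for all $3\times 3$ matrices by continuity.

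\emph{Second identity.} Assume first that all $q_i>0$, so that $\cP(q)>0$ and \eqref{formula2} applies. Then
\[
\ell_i(q)=q_i\,z_i^{\mathsf T}A_q^{-1}z_i .
\]
Since $A_q^{-1}=(D(\rho)A(\rho)^{-1})^{-1}=A(\rho)/D(\rho)$, this becomes $q_i\,z_i^{\mathsf T}A(\rho)z_i/D(\rho)$. By the identity $z_i^{\mathsf T}A(\rho)z_i=c_i$, already used in the proof of Proposition~\ref{prop:surface-formula}, we get $\ell_i(q)=q_ic_i/D(\rho)$. For completeness I would recheck $z_i^{\mathsf T}A(\rho)z_i=c_i$ quickly. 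Writing $A(\rho)=\sum_\xi \rho_\xi r_\xi r_\xi^{\mathsf T}$, we have $z_i^{\mathsf T}A(\rho)z_i=\sum_\xi\rho_\xi (z_i\cdot r_\xi)^2$. Here $z_i\cdot r_\xi\in\{0,\pm1\}$ equals $\pm1$ exactly when $\xi$ crosses the $i$-th cut; for instance $z_4=(0,1,1)$ meets $r_b,r_c,r_d,r_e$, giving $c_4$. A useful consistency check is that summing the marginals gives $\sum_i\ell_i(q)=\sum_i q_ic_i/D=3$ by \eqref{eq:tree}. This matches the fact that $\tau_q$ lives on $3$-element sets.

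\emph{Third identity.} By \eqref{espressionel_i}, $q_i\partial_{q_i}\cP(q)=\cP(q)\,\ell_i(q)$, and the first identity gives $\cP(q)=D(\rho)^2$.

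The only delicate point is whether the formulas are valid when some $q_i$ vanish. This happens on the boundary of the Selling cone, where for example $q_4=af=0$. There $\tau_q$ is still defined as long as $\cP(q)=D^2>0$, and the marginal formula $\ell_i=u_i\partial_i\log\cP$ holds directly from the definition \eqref{eq:marginal-u}. The same is true of the Jacobi computation \eqref{formula1}, since $A_q$ is invertible whenever $D(\rho)>0$. So no approximation argument is needed, although one could alternatively argue by continuity in $\rho$.
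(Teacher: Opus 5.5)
Your proposal is correct and follows the paper's proof essentially step by step. Taking determinants in $A_q=\operatorname{adj}A(\rho)=D(\rho)A(\rho)^{-1}$ gives $\cP(q)=D(\rho)^2$. Then $A_q^{-1}=A(\rho)/D(\rho)$, combined with \eqref{formula2} and $z_i^{\mathsf T}A(\rho)z_i=c_i$, gives the leverage scores, and \eqref{formula1} (equivalently \eqref{espressionel_i}) gives the last identity. Your side remarks are also correct but go beyond what the paper spells out: the treatment of vanishing $q_i$ (valid because $\cP$ is multiaffine and only $\cP(q)=D^2>0$ is needed), and the consistency check $\sum_i\ell_i=3$ via \eqref{eq:tree}.
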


\begin{proof}
For a $3\times3$ matrix $A$,
\[
\det(\operatorname{adj}A)=(\det A)^2,
\qquad
\operatorname{adj}(\operatorname{adj}A)=(\det A)A.
\]
Since $\det A(\rho)=D(\rho)$, the first identity follows from
\eqref{eq:physical-adjugate-lattice}.  Moreover,
$A_q^{-1}=D(\rho)^{-1}A(\rho)$, and hence, using
$z_i^{\mathsf T}A(\rho)z_i=c_i$ and \eqref{formula2},
\[
\ell_i(q)
=q_i z_i^{\mathsf T}A_q^{-1}z_i
=\frac{q_i c_i}{D(\rho)}.
\]
Finally, \eqref{formula1} gives
\[
q_i\frac{\partial}{\partial q_i}\cP(q)
=\cP(q)\ell_i(q)
=D(\rho)^2\ell_i(q).
\]
\end{proof}


From now on we denote the  marginals in \eqref{eq:Pq} as the leverage scores 
\begin{equation}\label{eq:ell}
 \ell_i=\frac{q_ic_i}{D(\rho)}, \qquad \sum_{i=0}^6\ell_i=3.
\end{equation} 
 
We conclude this section with a simple observation that will be useful. Let 
\begin{equation}\label{eq:L-T}
 L=\ell_0+\ell_1+\ell_2+\ell_3,
 \qquad
 T=abc+ade+bdf+cef=D(\rho)
 \left[1-\ell_4-\ell_5-\ell_6\right].
\end{equation}
The spanning-tree identity \eqref{eq:tree} gives
\begin{equation}\label{eq:Lminus2}
 L-2=1-(\ell_4+\ell_5+\ell_6)= \frac{T}{D(\rho)}.
\end{equation}

We observe the following property. 
\begin{lemma} 
\label{thm:leverage-product}
For every positive Selling vector $\rho$,
\begin{equation}\label{eq:leverage-product}
\ell_4\ell_5\ell_6\le\frac{(1-\ell_4-\ell_5-\ell_6)^2}{4}=\frac{(L-2)^2}{4}.
\end{equation}
Equality holds precisely on the opposite-edge family
\begin{equation}\label{eq:opposite}
 a=f,\qquad b=e,\qquad c=d.
\end{equation}
\end{lemma}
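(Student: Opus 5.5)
Using \eqref{eq:ell} and \eqref{eq:L-T}, I would first clear denominators. Since $\ell_4=af\,c_4/D$, $\ell_5=be\,c_5/D$, $\ell_6=cd\,c_6/D$ and $L-2=T/D$, inequality \eqref{eq:leverage-product} is equivalent to the homogeneous degree-nine polynomial inequality
\[
4\,(af)(be)(cd)\,c_4c_5c_6\;\le\;T^2\,D(\rho),
\qquad T=abc+ade+bdf+cef,
\]
on the positive orthant. The combinatorial reading helps here: $T$ is the sum of the four \emph{star} spanning trees of $K_4$ (the stars at $0,1,2,3$). $D(\rho)$ is $T$ plus the twelve path spanning trees. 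The product $afbecd$ is the product of the three opposite-edge pairs.

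The first step is to split the square $T^2$ by grouping the stars into two pairs and applying $(p+q)^2\ge 4pq$. For instance,
\[
T^2\ge 4(abc+cef)(ade+bdf)=4\,cd\,(ab+ef)(ae+bf).
\]
After cancelling $cd$, it would suffice to show
\[
af\,be\,c_4c_5c_6\le (ab+ef)(ae+bf)\,D(\rho).
\]
There are three such pairings of the stars, corresponding to the three opposite-edge pairs $\{c,d\},\{b,e\},\{a,f\}$. By the $S_4$-symmetry of the whole statement I may choose whichever pairing is most favourable. If a single pairing turns out to be too lossy, I would instead average the three resulting bounds, or keep the full $T^2$.

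The second step is the reduced inequality. I would expand $(ab+ef)(ae+bf)=be(a^2+f^2)+af(b^2+e^2)\ge 2\,af\,be\cdot(\text{mixed terms})$. Then I would compare $c_4c_5c_6$, with $c_4=b+c+d+e$, $c_5=a+c+d+f$, $c_6=a+b+e+f$, against $D$ term by term, aiming to write the difference as a sum of nonnegative monomials plus squares such as $(a-f)^2$, $(b-e)^2$, $(c-d)^2$. A quick check at $\rho\equiv1$ gives $64=64$, so the reduction is sharp at BCC. This is consistent with the predicted equality set. I expect this step to be the \emph{main obstacle}: a single pairing may fail away from the opposite-edge family, and the certificate may require keeping some of the discarded terms of $T^2$, namely $(p-q)^2$ with $p=abc+cef$, $q=ade+bdf$. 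My fallback is a direct computer-free SOS decomposition of $T^2D-4\,afbecd\,c_4c_5c_6$, organized by the substitution $a=f+\alpha$, $b=e+\beta$, $c=d+\gamma$ around the opposite-edge family.

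Finally, I would treat the equality case. Equality in $(p+q)^2\ge4pq$ forces $abc+cef=ade+bdf$, and equality in the SOS step should force the squares $(a-f)^2,(b-e)^2,(c-d)^2$ to vanish. Together these give \eqref{eq:opposite}. Conversely, substituting $a=f$, $b=e$, $c=d$ should make both sides coincide identically. This can be checked directly, since then $c_4=2(b+c)$, $c_5=2(a+c)$, $c_6=2(a+b)$ and $T=2c(ab+ab)$ up to the symmetric rearrangement.
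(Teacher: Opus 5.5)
Your reduction to $4\,af\,be\,cd\,c_4c_5c_6\le T^2D(\rho)$ is correct; it is exactly \eqref{eq:poly-leverage} in the paper. Your second step, however, fails.

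After the fixed pairing $T^2\ge4\,cd\,(ab+ef)(ae+bf)$, the inequality you propose to certify is $af\,be\,c_4c_5c_6\le(ab+ef)(ae+bf)\,D(\rho)$, and it is false. Along $\rho=(1,1,t,1,1,1)$ one has $c_4=c_5=t+3$, $c_6=4$ and $D=8(t+1)$. The inequality then reads $4(t+3)^2\le32(t+1)$, i.e.\ $(t-1)^2\le0$; at $t=2$ it says $100\le96$. In fact it fails on the whole family $a=f$, $b=e$, $c\ne d$, which comes arbitrarily close to BCC. On that family the term you discard, $(p-q)^2=4a^2b^2(c-d)^2$, cannot be dropped.

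Consequences:
\begin{itemize}
\item No certificate by nonnegative monomials and squares can exist for your reduced inequality.
\item The check $64=64$ at BCC is misleading, since the reduced inequality is not even locally valid there.
\item Your fallbacks are plans, not a proof: averaging the three pairings, keeping the full $T^2$ (which just returns you to the original problem), or an unspecified SOS of $T^2D-4\,afbecd\,c_4c_5c_6$.
\item The equality analysis built on them is likewise unsupported.
\end{itemize}

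The missing idea is a change of variables adapted to the three opposite pairs. The paper sets $S_a=a+f$, $\alpha=(a-f)/S_a$, and similarly $\beta,\gamma$ for $\{b,e\},\{c,d\}$. Then
\begin{itemize}
\item $c_4c_5c_6=K=(S_a+S_b)(S_a+S_c)(S_b+S_c)$,
\item $T=\tfrac12S_aS_bS_c(1+\alpha\beta\gamma)$,
\item $af\,be\,cd=\tfrac1{64}S_a^2S_b^2S_c^2(1-\alpha^2)(1-\beta^2)(1-\gamma^2)$,
\item $D=\tfrac K4\Theta$, where $\Theta$ is a convex combination, with positive weights $\lambda_a,\lambda_b,\lambda_c,2\eta$, of $1-\alpha^2$, $1-\beta^2$, $1-\gamma^2$, $1+\alpha\beta\gamma$.
\end{itemize}
The lemma becomes $(1+\alpha\beta\gamma)^2\Theta\ge(1-\alpha^2)(1-\beta^2)(1-\gamma^2)$. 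The paper proves this by weighted AM--GM together with $(1+hkg)^2\ge(1-k^2)(1-g^2)$.

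In these coordinates your pairing idea can actually be rescued, but only with a \emph{pointwise} choice of pairing. Your $\{c,d\}$ pairing gives $4pq=\tfrac14S_a^2S_b^2S_c^2(1-\gamma^2)(1-\alpha^2\beta^2)$ and reduces the lemma to $(1-\alpha^2)(1-\beta^2)\le(1-\alpha^2\beta^2)\Theta$. This holds when $|\gamma|\le|\beta|\le|\alpha|$, for two reasons:
\begin{itemize}
\item $\Theta\ge1-\alpha^2$, because each of the four averaged quantities is at least $1-\alpha^2$ (using $|\alpha\beta\gamma|\le|\alpha|^3\le\alpha^2$);
\item $1-\alpha^2\beta^2\ge1-\beta^2$.
\end{itemize}
That ordering is a legitimate reduction by $S_4$-symmetry, and equality forces $\alpha=\beta=\gamma=0$. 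Such a region-dependent argument is exactly what an SOS certificate on the whole positive orthant cannot capture.
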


\begin{proof}
Since \(q_4q_5q_6=abcdef\), due to \eqref{eq:Lminus2}, inequality
\eqref{eq:leverage-product} is equivalent to
\begin{equation}\label{eq:poly-leverage}
 4abcdef\,c_4c_5c_6\le T^2D(\rho).
\end{equation}
Let us introduce the three opposite-pair sums and differences
\[
 S_a=a+f,\quad S_b=b+e,\quad S_c=c+d,
 \qquad
 x=a-f,\quad y=b-e,\quad z=c-d,
\]
and normalize
\[
 \alpha=\frac{x}{S_a},\qquad \beta=\frac{y}{S_b},\qquad \gamma=\frac{z}{S_c}.
\]
Then \(|\alpha|,|\beta|,|\gamma|<1\).  Put
\[
 K=(S_a+S_b)(S_a+S_c)(S_b+S_c),
\]
\[
 \lambda_a=\frac{S_a^2(S_b+S_c)}K,
 \quad
 \lambda_b=\frac{S_b^2(S_a+S_c)}K,
 \quad
 \lambda_c=\frac{S_c^2(S_a+S_b)}K,
 \quad
 \eta=\frac{S_aS_bS_c}{K}.
\]
Note that \begin{equation}\label{eq:weight-sum}
 \lambda_a+\lambda_b+\lambda_c+2\eta=1.
\end{equation}
Then by direct computations we obtain 
\begin{equation*}
 T=\frac{S_aS_bS_c+xyz}{2}=\frac{S_aS_bS_c}{2}(1+\alpha\beta\gamma),
\end{equation*}
\begin{equation*}
 D(\rho)=\frac K4\Theta,
\end{equation*}
where
\begin{equation}\label{eq:pair-Theta}
 \Theta=\lambda_a(1-\alpha^2)+\lambda_b(1-\beta^2)
 +\lambda_c(1-\gamma^2)+2\eta(1+\alpha\beta\gamma).
\end{equation}

Furthermore
\[
abcdef=\frac{S_a^2S_b^2S_c^2}{64}
 (1-\alpha^2)(1-\beta^2)(1-\gamma^2),
 \qquad
 c_4c_5c_6=K.
\]
Therefore proving \eqref{eq:poly-leverage} (and so \eqref{eq:leverage-product}) is equivalent to prove 
\begin{equation}\label{eq:dimensionless}
 (1+\alpha\beta\gamma)^2\Theta
 \ge(1-\alpha^2)(1-\beta^2)(1-\gamma^2).
\end{equation}

First of all by weighted AM--GM and \eqref{eq:weight-sum}, recalling  \eqref{eq:pair-Theta}
\begin{equation}\label{theta}
 (1-\alpha^2)^{\lambda_a}(1-\beta^2)^{\lambda_b}
 (1-\gamma^2)^{\lambda_c}(1+\alpha\beta\gamma)^{2\eta}
\leq \lambda_a(1-\alpha^2)+\lambda_b(1-\beta^2)+\lambda_c
 (1-\gamma^2) +2\eta(1+\alpha\beta\gamma)=\Theta. \end{equation}
Now we observe that  for every  \(|h|,|k|,|g|<1\),
\begin{equation}\label{eq:pair-AMGM}
 (1-k^2)(1-g^2)\le(1+hkg)^2.
\end{equation}
 Indeed
\[
 1+hkg\ge1-|kg|,
 \qquad
 (1-|kg|)^2-(1-k^2)(1-g^2)=(|k|-|g|)^2\geq 0.
\]
 Therefore, using \eqref{eq:pair-AMGM} for $\alpha,\beta,\gamma$ we get
 \begin{eqnarray*} (1-\beta^2)^{\lambda_a+\eta}(1-\gamma^2)^{\lambda_a+\eta}
\leq (1+\alpha\beta \gamma)^{2\lambda_a+2\eta}
\\  (1-\gamma^2)^{\lambda_b+\eta}(1-\alpha^2)^{\lambda_b+\eta}
\leq (1+\alpha\beta \gamma)^{2\lambda_b+2\eta}\\
 (1-\beta^2)^{\lambda_c+\eta}(1-\alpha^2)^{\lambda_c+\eta}
\leq (1+\alpha\beta \gamma)^{2\lambda_c+2\eta}
\end{eqnarray*}
Multiplying the three lines and recalling \eqref{eq:weight-sum}, we get 
\[(1-\alpha^2)^{1-\lambda_a}(1-\beta^2)^{1-\lambda_b}(1-\gamma^2)^{1-\lambda_c}\leq (1+\alpha\beta \gamma)^{2+2\eta}
\]
   Multiplying this inequality by \eqref{theta} 
\[
 (1+\alpha\beta\gamma)^2\Theta\ge
 (1-\alpha^2)(1-\beta^2)(1-\gamma^2),
\]
which is \eqref{eq:dimensionless}. This gives the conclusion. 

All four AM--GM weights $\lambda_a, \lambda_b, \lambda_c, \eta$ are positive in the interior.  Equality in \eqref{theta} occurs when
\[
1-\alpha^2=1-\beta^2=1-\gamma^2=1+\alpha\beta\gamma,
\]
which implies $\alpha=\beta=\gamma=0$.  This gives $a=f$, $b=e$, and
$c=d$, that is \eqref{eq:opposite}.
\end{proof}


\subsection{A weak correlation estimate}\label{sec:weak-correlation}

We provide now a parametrization  of  $q_i$ in \eqref{qdef} by six positive
variables and will be used in the correlation estimate below.

\begin{lemma} \label{thm:q-chart}
Set
\begin{equation}\label{eq:chart-products}
 A=af,\quad B=be,\quad C=cd,
 \qquad X=ef,\quad Y=df,\quad Z=de
\end{equation}
so that 
\begin{align}
 q_0&=X+Y+Z,\label{eq:q0chart}\\
 q_1&=\frac{BC+CX+BY}{Z},\label{eq:q1chart}\\
 q_2&=\frac{AC+CX+AZ}{Y},\label{eq:q2chart}\\
 q_3&=\frac{AB+BY+AZ}{X},\label{eq:q3chart}\\
 q_4&=A,\qquad q_5=B,\qquad q_6=C.\label{eq:qdoublechart}
\end{align}
Conversely, given  \((A,B,C,X,Y,Z)\in\R^6\) with $A,B,C,X,Y,Z> 0$ it is possible to define a positive 
Selling vector $(a,b,c,d,e,f)$ by
\begin{equation}\label{eq:chart-inverse}
 d=\sqrt{\frac{YZ}{X}},\quad
 e=\sqrt{\frac{XZ}{Y}},\quad
 f=\sqrt{\frac{XY}{Z}},\qquad
 a=\frac Af,\quad b=\frac Be,\quad c=\frac Cd.
\end{equation}
Therefore \eqref{eq:q0chart}--\eqref{eq:qdoublechart} parameterize exactly
the image of the positive Selling cone in the space of weighted spanning-tree polynomials $q_i$.
\end{lemma}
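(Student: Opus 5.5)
The lemma is pure algebra, so the plan is to verify the forward identities by substitution and then check that the inverse formulas \eqref{eq:chart-inverse} invert the chart on the positive cone.

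\emph{Forward direction.} Starting from the definitions \eqref{qdef}, the double-cut identities \eqref{eq:qdoublechart} are immediate: $q_4=af=A$, $q_5=be=B$, $q_6=cd=C$. Likewise $q_0=de+df+ef=Z+Y+X$, which is \eqref{eq:q0chart}. For the singleton cuts I will multiply by the appropriate monomial. For example,
\[
Z\,q_1=de(bc+bf+cf)=(be)(cd)+(cd)(ef)+(be)(df)=BC+CX+BY,
\]
which gives \eqref{eq:q1chart}. The same regrouping works for the others:
\[
Y q_2=df(ac+ae+ce)=(af)(cd)+(af)(de)+(cd)(ef)=AC+AZ+CX,
\]
\[
X q_3=ef(ab+ad+bd)=(af)(be)+(af)(de)+(be)(df)=AB+AZ+BY.
\]
Since all Selling parameters are positive, $X,Y,Z>0$, so these divisions are legitimate and the chart values are positive.

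\emph{Inverse direction.} Given $A,B,C,X,Y,Z>0$, define $d,e,f$ by \eqref{eq:chart-inverse}. These are positive, and a direct check gives
\[
ef=\sqrt{\tfrac{XZ}{Y}\cdot\tfrac{XY}{Z}}=X,
\]
and similarly $df=Y$ and $de=Z$. Then $a=A/f$, $b=B/e$, $c=C/d$ are positive and satisfy $af=A$, $be=B$, $cd=C$. So the constructed positive Selling vector has chart coordinates exactly $(A,B,C,X,Y,Z)$.

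Conversely, the map $\rho\mapsto(A,B,C,X,Y,Z)$ is injective on the positive cone. Indeed $d^2=YZ/X$, $e^2=XZ/Y$, $f^2=XY/Z$ determine $d,e,f$ by positivity, and then $A,B,C$ determine $a,b,c$. Hence $\rho\mapsto(A,\dots,Z)$ is a bijection of $\mathbb R_{>0}^6$ onto itself.

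Composing this bijection with the forward formulas shows that \eqref{eq:q0chart}--\eqref{eq:qdoublechart}, as $(A,\dots,Z)$ ranges over $\mathbb R_{>0}^6$, describe exactly the set $\{q(\rho):\rho\in\mathbb R^6_{>0}\}$. There is no real obstacle. The only point needing care is the regrouping of the three products in each $q_1,q_2,q_3$ computation, and these have been written out above.
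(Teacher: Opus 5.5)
Your proof is correct and takes the same approach as the paper, whose proof just says the lemma is a straightforward computation from \eqref{qdef}. You write that computation out explicitly: the regroupings $Zq_1=BC+CX+BY$, $Yq_2=AC+AZ+CX$, $Xq_3=AB+AZ+BY$, the inversion check $ef=X$, $df=Y$, $de=Z$, and the fact that $\rho\mapsto(A,B,C,X,Y,Z)$ is a bijection of $\mathbb R_{>0}^6$, which justifies the final ``parameterize exactly the image'' claim.
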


\begin{proof}
It is a straightforward computation, by using definition \eqref{qdef}. 
\end{proof}
We rewrite also the leverage scores in terms of the new variables as follows. 

Define
\begin{align}
\Delta={}&ABC+ABY+ABZ+ACX+ACZ+AXZ+AYZ+AZ^2\notag\\
&+BCX+BCY+BXY+BY^2+BYZ+CX^2+CXY+CXZ,
\label{eq:chart-Delta}
\end{align}
and
\begin{align*}
h_4&=A(BY+CX+XZ+YZ),\\
h_5&=B(AZ+CX+XY+YZ),\\
h_6&=C(AZ+BY+XY+XZ).
\end{align*}
 
Substituting
\eqref{eq:q0chart}--\eqref{eq:qdoublechart}
into \eqref{eq:Pexplicit} and collecting terms gives
\[
 \cP(q)=\frac{\Delta^2}{XYZ}.
\]
Since \(\cP(q)=D(\rho)^2\) by \eqref{eq:Pq}, and all quantities are
positive on the positive Selling cone, it follows 
\[
 D(\rho)=\frac{\Delta}{\sqrt{XYZ}}.
\]
Moreover, using \eqref{eq:ell}, \eqref{cutw},
\eqref{eq:qdoublechart}, and \eqref{eq:chart-inverse}, we obtain 
\[
\begin{aligned}
 \ell_4
  =\frac{q_4c_4}{D(\rho)}
 =\frac{A(b+c+d+e)\sqrt{XYZ}}{\Delta} =\frac{A(BY+CX+XZ+YZ)}{\Delta}
 =\frac{h_4}{\Delta}.
\end{aligned}
\]
The identities
\[
 \ell_5=\frac{h_5}{\Delta},
 \qquad
 \ell_6=\frac{h_6}{\Delta}
\]
follow in the same way.
Therefore we have  
\begin{equation}\label{eq:chart-covariants}
 \cP(q)=\frac{\Delta^2}{XYZ},
 \qquad
 \ell_i=\frac{h_i}{\Delta}\quad (i=4,5,6).
\end{equation}
Let now
\begin{align}\label{eq:H-C-L}
 H&=h_4+h_5+h_6=\Delta(\ell_4+\ell_5+\ell_6),
\\
 \mathscr L&=\Delta(\Delta-H)^2-4h_4h_5h_6= \Delta^3\left[ (1-\ell_4-\ell_5-\ell_6)^2-4 \ell_4\ell_5\ell_6\right]\notag \\&=\Delta^3[(L-2)^2-4 \ell_4\ell_5\ell_6]\geq 0,
\notag\end{align}
where $L-2$ is defined in \eqref{eq:Lminus2} and the last inequality is proved in  \eqref{eq:leverage-product}. 
We define also 
\begin{align}
\mathscr C:={}&A^2BCZ^2+AB^2CY^2+ABC^2X^2
 +ABY^2Z^2\notag\\
&+ACX^2Z^2+BCX^2Y^2-6ABCXYZ.
\label{eq:correlation-defect}
\end{align}
We show that $\mathscr C\geq 0$.
\begin{lemma}\label{lem:projection-kernel}
Let $\tau_q$ the probability measure   associated to $(q_i)_i$.
Let \[
 J_2=\sum_{1\le i<j\le3}\sum_{ \stackrel{ I \text{ s.t. }  }{i+3,j+3\in I}}\tau_q(I) = 
 \sum_{  I \text{ s.t. }   {4,5\in I}}\tau_q(I) +\sum_{  I \text{ s.t. }   {4,6\in I}}\tau_q(I) +\sum_{  I \text{ s.t. }   {5,6\in I}}\tau_q(I)
\] and \[
 J_3= \tau_q(4,5,6).
\]
 
Then
\begin{equation}\label{eq:J2J3}
 J_2=\ell_4\ell_5+\ell_4,\ell_6+\ell_5\ell_6-\frac{\mathscr C}{\Delta^2},
 \qquad
 J_3\le \ell_4\ell_5\ell_6.
\end{equation} 
In particular $\mathscr C\ge0$.
\end{lemma}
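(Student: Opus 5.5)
The lemma is an instance of the negative-correlation (projection-kernel) structure of the determinantal measure $\tau_q$. Both inequalities follow once $\tau_q$ is realised as a projection determinantal process.

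\textbf{The kernel.} Write $A_q=\sum_i q_i z_iz_i^{\mathsf T}$ and set $w_i=\sqrt{q_i}\,A_q^{-1/2}z_i$. Then $\sum_i w_iw_i^{\mathsf T}=I_3$. Consequently the $7\times7$ matrix
\[
K_{ij}=w_i\cdot w_j=\sqrt{q_iq_j}\,z_i^{\mathsf T}A_q^{-1}z_j
\]
is an orthogonal projection of rank three. By Cauchy--Binet, $\tau_q(I)=\det(W_I)^2$ with $W_I=(w_i)_{i\in I}$. This is exactly the projection determinantal measure of \cite{Lyons2003}. For such measures, the probability that a set $S$ is contained in the random basis equals $\det K_S$ for every $S$. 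For $S=\{i\}$ this gives $\ell_i=K_{ii}$, which agrees with \eqref{formula2}.

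\textbf{The identity for $J_2$.} For $i\neq j$ we have
\[
\sum_{I\ni i,j}\tau_q(I)=\det K_{\{i,j\}}=\ell_i\ell_j-K_{ij}^2.
\]
Summing over the pairs in $\{4,5,6\}$ gives
\[
J_2=e_2(\ell_4,\ell_5,\ell_6)-(K_{45}^2+K_{46}^2+K_{56}^2).
\]
It remains to show
\[
K_{45}^2+K_{46}^2+K_{56}^2=\frac{\mathscr C}{\Delta^2}.
\]
Lemma~\ref{lem:Pq} gives $A_q^{-1}=A(\rho)/D(\rho)$, so
\[
K_{ij}^2=\frac{q_iq_j\,(z_i^{\mathsf T}A(\rho)z_j)^2}{D(\rho)^2}.
\]
Evaluating the bilinear forms on $z_4=(0,1,1)$, $z_5=(1,0,1)$, $z_6=(1,1,0)$ yields $z_4^{\mathsf T}Az_5=c-d$, and similarly for the other two pairs (these are differences of opposite Selling parameters). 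Hence
\[
K_{45}^2=\frac{AB(c-d)^2}{D^2}.
\]
Passing to the chart \eqref{eq:chart-inverse} with $D=\Delta/\sqrt{XYZ}$, one has $(c-d)^2=\bigl(C/d-d\bigr)^2$ and $d^2=YZ/X$. This gives
\[
AB(c-d)^2XYZ=AB\,\frac{(CX-YZ)^2}{X}\cdot X\cdots.
\]
The main routine obstacle is expanding the three such terms and checking that their sum matches $\mathscr C$ in \eqref{eq:correlation-defect}, including the cross term $-6ABCXYZ$. My first check will be that each $K_{ij}^2$ contributes a term of the form $(\ \cdot\ -\ \cdot\ )^2$, and that the six squares plus the cross terms recombine to $\mathscr C$. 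If the recombination fails, I will instead verify the identity symbolically as a polynomial identity in $A,B,C,X,Y,Z$. Since the left side is a sum of squares, $\mathscr C\ge0$ follows immediately.

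\textbf{The bound for $J_3$.} We have $J_3=\det K_{\{4,5,6\}}$. A projection kernel has its principal minors given by a positive semidefinite matrix with diagonal $\ell_4,\ell_5,\ell_6$, so Hadamard's inequality gives
\[
\det K_{\{4,5,6\}}\le \ell_4\ell_5\ell_6.
\]
Alternatively, this is the negative association of determinantal measures. In either form the bound needs no computation, so the only real work is the chart identity for the squared off-diagonal entries.
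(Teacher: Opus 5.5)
Your proof is correct, but it obtains the $J_2$ identity by a different route from the paper. The paper writes each pair marginal as $q_iq_j\,\partial_{ij}\cP(q)/\cP(q)$. It then expands these second derivatives of the explicit polynomial \eqref{eq:Pexplicit} in the chart $(A,B,C,X,Y,Z)$ and collects terms into $(h_4h_5+h_4h_6+h_5h_6-\mathscr C)/(XYZ)$. It deduces $\mathscr C\ge0$ from $\ell_i\ell_j\ge\sum_{I\ni i,j}\tau_q(I)$. That inequality is the negative correlation of the determinantal measure, and the paper leaves it implicit.

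You instead use the projection kernel. Each pair marginal is then a $2\times2$ minor, and the defect is exactly $K_{45}^2+K_{46}^2+K_{56}^2$. Because $A_q^{-1}=A(\rho)/D(\rho)$, the off-diagonal entries are simply $\sqrt{q_iq_j}$ times $(c-d)/D$, $(b-e)/D$ and $(a-f)/D$.

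The step you left unfinished (your displayed line ending in ``$\cdots$'' is garbled) closes immediately. We have $XYZ=(def)^2$ and $CX-YZ=def\,(c-d)$, so $XYZ(c-d)^2=(CX-YZ)^2$. Likewise $XYZ(b-e)^2=(BY-XZ)^2$ and $XYZ(a-f)^2=(AZ-XY)^2$. This gives
\[
\mathscr C=AB(CX-YZ)^2+AC(BY-XZ)^2+BC(AZ-XY)^2 ,
\]
whose expansion is exactly \eqref{eq:correlation-defect}, with the three cross terms producing $-6ABCXYZ$.

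Your route buys three things. It needs far less algebra. It gives a self-contained proof of $\mathscr C\ge0$ as an explicit weighted sum of squares. It also shows directly that $\mathscr C=0$ exactly on the opposite-edge locus $a=f$, $b=e$, $c=d$.

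The paper's route stays within the generating-polynomial formalism. The price is a heavy expansion and an unstated appeal to the same kernel property you use.

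For $J_3$ your argument is identical to the paper's: Hadamard's inequality applied to the principal minor $K_{\{4,5,6\}}$.
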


\begin{proof}

By \eqref{eq:tau-u} and recalling the definition of  \(\cP\), for \(i\ne j\)
we observe that
\[
 \sum_{   I \text{ s.t. }  i,j\in I}\tau_q(I)
 =
 \frac{q_iq_j\,\partial_{ij}\cP(q)}{\cP(q)},
 \qquad
 \partial_{ij}:=\frac{\partial^2}{\partial q_i\partial q_j}.
\]
Hence
\[
 J_2
 =
 \frac{
 q_4q_5\,\partial_{45}\cP(q)
 +q_4q_6\,\partial_{46}\cP(q)
 +q_5q_6\,\partial_{56}\cP(q)
 }{\cP(q)}.
\]
Using the explicit formula \eqref{eq:Pexplicit}, 
substituting
\eqref{eq:q0chart}--\eqref{eq:qdoublechart},
and collecting terms gives
\[
 q_4q_5\,\partial_{45}\cP(q)
 +q_4q_6\,\partial_{46}\cP(q)
 +q_5q_6\,\partial_{56}\cP(q) 
  =
 \frac{
 h_4h_5+h_4h_6+h_5h_6-\mathscr C
 }{XYZ}.
\]
Using \eqref{eq:chart-covariants}, we therefore obtain
\[
 J_2
 =
 \frac{h_4h_5+h_4h_6+h_5h_6}{\Delta^2}
 -\frac{\mathscr C}{\Delta^2}=
\ell_4\ell_5+\ell_4,\ell_6+\ell_5\ell_6-\frac{\mathscr C}{\Delta^2},
\]
which proves the first identity in \eqref{eq:J2J3}.

Moreover,  recalling  that
$\ell_j=\sum_{ I \text{ s.t.    }j \in I}\tau_q(I) $,  
we get \[
 \frac{\mathscr C}{\Delta^2}
 =
 \sum_{1\le i<j\le3}
 \left(
 \ell_{i+3}\ell_{j+3}-\sum_{ \stackrel{ I \text{ s.t. }  }{i+3,j+3\in I}}\tau_q(I) 
 \right)\ge0.
\]
  Thus \(\mathscr C\ge0\).

Finally, 
Hadamard's inequality gives
\[
 J_3\le \ell_4\ell_5\ell_6,
\]
completing the proof.
\end{proof}
We have the following weak correlation estimate between one-point marginals $\ell_i$, $i=4,5,6$ and the two-point marginal $J_2$. 
\begin{proposition}\label{thm:weak-correlation}
There holds 
\begin{equation}\label{eq:correlation-control}
\ell_4\ell_5+\ell_4,\ell_6+\ell_5\ell_6-J_2\le\frac{(1-(\ell_4+\ell_5+\ell_6))^2-4\ell_4\ell_5\ell_6}{1-(\ell_4+\ell_5+\ell_6)}.\end{equation}

Equality  in \eqref{eq:correlation-control} is possible only on the opposite-edge
locus
\[a=f\qquad b=e\qquad c=d.\]
\end{proposition}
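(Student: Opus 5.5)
By Lemma~\ref{lem:projection-kernel}, the left-hand side of \eqref{eq:correlation-control} equals $\mathscr C/\Delta^2$. By \eqref{eq:H-C-L}, together with $1-(\ell_4+\ell_5+\ell_6)=(\Delta-H)/\Delta$, the right-hand side equals
\[
\frac{\mathscr L/\Delta^3}{(\Delta-H)/\Delta}=\frac{\mathscr L}{\Delta^2(\Delta-H)}.
\]
Note that $\Delta-H=\Delta(L-2)>0$, because $T>0$ on the positive cone. Hence \eqref{eq:correlation-control} is equivalent to the polynomial inequality
\[
\mathscr C\,(\Delta-H)\le \mathscr L=\Delta(\Delta-H)^2-4h_4h_5h_6
\]
in the six positive chart variables $A,B,C,X,Y,Z$ of Lemma~\ref{thm:q-chart}. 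I would prove this polynomial inequality directly.

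The first step is to compute $\Delta-H$ explicitly. Subtracting $h_4+h_5+h_6$ from \eqref{eq:chart-Delta} should leave the terms not divisible by the double-cut products, essentially $AZ^2+BY^2+CX^2+ABC$ plus a few mixed terms. Indeed $\Delta-H=T\sqrt{XYZ}$, and $T=abc+ade+bdf+cef$ becomes, in the chart, $\sqrt{XYZ}$ times something like $ABC/(XYZ)+A+B+C$. Carrying this out carefully will give a compact form for $\Delta-H$.

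Next I would expand $\mathscr L-\mathscr C(\Delta-H)$ and try to write it as a sum of manifestly nonnegative terms. The key structural observation is that $\mathscr C$ is an AM--GM defect: it is the sum of six monomials minus six times their geometric mean $ABCXYZ$. Its zero locus, where the six monomials coincide, is exactly where $BCX^2=ACY^2=ABZ^2$, $\ldots$. Translated back through \eqref{eq:chart-inverse}, this should be the opposite-edge family \eqref{eq:opposite}. This matches the equality locus of Lemma~\ref{thm:leverage-product}, where $\mathscr L$ attains its minimum relative to $\Delta^3(L-2)^2$.

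The strategy is therefore to split $\mathscr L=\Delta(\Delta-H)^2-4h_4h_5h_6$ as in the proof of Lemma~\ref{thm:leverage-product}. I would pass to the opposite-pair variables $S_a,S_b,S_c,\alpha,\beta,\gamma$, in which $\mathscr L$ becomes a multiple of $(1+\alpha\beta\gamma)^2\Theta-\prod(1-\alpha^2)$, and $\mathscr C$ should become a multiple of a sum of squares in $\alpha,\beta,\gamma$ such as $\sum\lambda(\alpha-\ldots)^2$. The goal is then to compare the two quadratic-order defects and show that the deficit in \eqref{eq:dimensionless} dominates $\mathscr C\cdot(L-2)$, using the weighted AM--GM chain from Lemma~\ref{thm:leverage-product} but keeping the second-order remainder of AM--GM rather than discarding it.

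The main obstacle is this final domination: the Lemma's proof produces only $\mathscr L\ge0$, whereas here a quantitative lower bound is required. My first attempt would be a brute-force sum-of-squares or positivity certificate for the polynomial $\mathscr L-\mathscr C(\Delta-H)$ in $A,B,C,X,Y,Z$, exploiting the $S_3$ symmetry that permutes $(A,X),(B,Y),(C,Z)$ simultaneously, and checking that all coefficients are nonnegative after the substitution $X=Y=Z$ perturbed. For equality, every nonnegative term in the certificate must vanish. The vanishing of the term controlled by $\mathscr C$, or by the AM--GM step \eqref{theta}, forces $\alpha=\beta=\gamma=0$, which is the opposite-edge locus.
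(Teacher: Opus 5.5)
Your reduction is correct and coincides with the paper's first step. The left side equals $\mathscr C/\Delta^2$ and the right side equals $\mathscr L/(\Delta^2(\Delta-H))$, with $\Delta-H=T\sqrt{XYZ}>0$, so the proposition is equivalent to $\mathscr L\ge(\Delta-H)\mathscr C$. Your observation that $\mathscr C$ is six times an AM--GM defect is also correct: the six monomials have geometric mean $ABCXYZ$, so $\mathscr C$ vanishes exactly when $X^2=AB$, $Y^2=AC$, $Z^2=BC$, that is on $a=f$, $b=e$, $c=d$.

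\textbf{The gap.} The inequality $\mathscr L\ge(\Delta-H)\mathscr C$ is the entire content of the proposition, and you leave it open as ``the main obstacle''. Checking coefficients near $X=Y=Z$, or to second order around the opposite-edge locus, cannot establish global positivity. The weighted AM--GM chain of the leverage-product lemma only gives $\mathscr L\ge0$, and a quantitative remainder would have to be produced explicitly. In your opposite-pair variables one has $\mathscr C=4abc\,(def)^3\sum\alpha^2/(1-\alpha^2)$, so the target becomes
$(1+\alpha\beta\gamma)^2\Theta-(1-\alpha^2)(1-\beta^2)(1-\gamma^2)\ge\frac{\eta}{2}(1+\alpha\beta\gamma)\sum_{\mathrm{cyc}}\alpha^2(1-\beta^2)(1-\gamma^2)$.
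Nothing in your proposal proves this. For the same reason, your equality argument, which appeals to ``every nonnegative term in the certificate'', has nothing to rest on.

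\textbf{The missing idea.} The paper uses a normalization adapted exactly to your AM--GM locus: $A=\mathfrak a^2$, $B=\mathfrak b^2$, $C=\mathfrak c^2$, $X=\mathfrak a\mathfrak b\,x$, $Y=\mathfrak a\mathfrak c\,y$, $Z=\mathfrak b\mathfrak c\,z$. This moves $\mathscr C=0$ to $x=y=z=1$. Then $\mathscr L-(\Delta-H)\mathscr C=\mathfrak a^5\mathfrak b^5\mathfrak c^5\Psi$, where $\Psi$ is cubic in the scales.
\begin{itemize}
\item The six coefficients of $\mathfrak a^2\mathfrak b,\dots,\mathfrak b\mathfrak c^2$ are positive multiples of products $E^-E^+$ of sums of squares, for instance $E_z^-=(x-y)^2+(z-1)^2$.
\item The remaining $\mathfrak a\mathfrak b\mathfrak c$-coefficient $Q(x,y,z)$ is symmetric. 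Using $s_2\le s_1^2/3$ and $r\le(s_1/3)^{3/2}$, it is bounded below by $(\tau-1)^2(10\tau^4+14\tau^3+9\tau^2+2\tau+1)$ with $\tau=\sqrt{s_1/3}$.
\item Equality then forces $E_x^-=E_y^-=E_z^-=0$, i.e.\ $x=y=z=1$, which is the opposite-edge locus.
\end{itemize}

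\textbf{Smaller inaccuracies.}
\begin{itemize}
\item $\Delta-H=ABC+AZ^2+BY^2+CX^2$ exactly, so $T=(ABC+AZ^2+BY^2+CX^2)/\sqrt{XYZ}$, not the form you guessed.
\item The $S_3$ symmetry of the problem pairs $A$ with $Z$, $B$ with $Y$, and $C$ with $X$, not $(A,X),(B,Y),(C,Z)$. For example, swapping $A\leftrightarrow B$ must come with $Y\leftrightarrow Z$.
\end{itemize}
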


\begin{proof}
In order to prove \eqref{eq:correlation-control}, we rewrite it as follows: \begin{equation}\label{eq:stability-weak}
 \mathscr L\ge(\Delta-H)\mathscr C
\end{equation} recalling the definition of  $\mathscr L, \Delta, H, \mathscr C$  in \eqref{eq:H-C-L}, \eqref{eq:chart-Delta}, \eqref{eq:correlation-defect} and the fact that by  \eqref{eq:J2J3} we get 
$\mathscr C=\Delta^2(\ell_4\ell_5+\ell_4,\ell_6+\ell_5\ell_6-J_2.$

Write
\[
 A=\mathfrak a^2,\quad B=\mathfrak b^2,\quad C=\mathfrak c^2,
 \qquad
 X=\mathfrak a\mathfrak b x,\quad
 Y=\mathfrak a\mathfrak c y,\quad
 Z=\mathfrak b\mathfrak c z,
\]
with $x,y,z>0$.  A direct collection of
$\mathscr L-(\Delta-H)\mathscr C$ in the scale variables gives
\begin{equation}\label{eq:weak-collection}
 \mathscr L-(\Delta-H)\mathscr C
 =\mathfrak a^5\mathfrak b^5\mathfrak c^5\,\Psi,
\end{equation}
where
\begin{align*}
\Psi={}&
 \mathfrak a^2\mathfrak b\,(xz+y)E_z^-E_z^+
 +\mathfrak a^2\mathfrak c\,(x+yz)E_z^-E_z^+\\
&+\mathfrak a\mathfrak b^2\,(xy+z)E_y^-E_y^+
 +\mathfrak b^2\mathfrak c\,(x+yz)E_y^-E_y^+\\
&+\mathfrak a\mathfrak c^2\,(xy+z)E_x^-E_x^+
 +\mathfrak b\mathfrak c^2\,(xz+y)E_x^-E_x^+\\
&+\mathfrak a\mathfrak b\mathfrak c\,Q(x,y,z),
\end{align*}
with
\begin{align*}
 E_z^-&=(x-y)^2+(z-1)^2,&
 E_z^+&=(x+y)^2+(z+1)^2,\\
 E_y^-&=(x-z)^2+(y-1)^2,&
 E_y^+&=(x+z)^2+(y+1)^2,\\
 E_x^-&=(y-z)^2+(x-1)^2,&
 E_x^+&=(y+z)^2+(x+1)^2.
\end{align*}
It remains only to show $Q\ge0$.  Set
\[
 s_1=x^2+y^2+z^2,\qquad
 s_2=x^2y^2+y^2z^2+z^2x^2,\qquad
 r=xyz.
\]
The remaining symmetric coefficient is
\begin{equation}\label{eq:weak-Q}
 Q=s_1^3+2s_1^2+2s_1+1-(s_1+9)s_2-8r^2-2r(s_1+1).
\end{equation}
For the three positive numbers $x^2,y^2,z^2$,
\[
 s_2\le\frac{s_1^2}{3},\qquad
 r^2\le\left(\frac{s_1}{3}\right)^3,
 \qquad
 r\le\left(\frac{s_1}{3}\right)^{3/2}.
\]
Since all three quantities occur in \eqref{eq:weak-Q} with negative
coefficients,
\[
 Q\ge
 \frac{10}{27}s_1^3-s_1^2+2s_1+1
 -2(s_1+1)\left(\frac{s_1}{3}\right)^{3/2}.
\]
Putting $\tau=\sqrt{s_1/3}$, the right-hand side factors as
\[
 (\tau-1)^2
 \left(10\tau^4+14\tau^3+9\tau^2+2\tau+1\right)\ge0.
\]
Thus every term in \eqref{eq:weak-collection} is nonnegative, proving
\eqref{eq:stability-weak}.  
If equality holds, the six manifestly nonnegative scale terms force
$x=y=z=1$, that is  $X^2=AB, Y^2=AC,  Z^2=BC$, which in turns give
$ef=ab$, $df=ac$ and $de=bc$, i.e. $a=f, b=e, c=d$. which is the stated opposite-edge locus.
\end{proof}


Let $\mathcal B_k=\{I: I\subset\{0, \dots, 6\}, |I|=3,\kappa_I>0,\ |I\cap\{4,5,6\}|=k\}$ and define 
\[
P_k=\sum_{I\in \mathcal B_k }\tau_q(I),\quad 0\le k\le3.
\]
Recalling the definition of $J_2,J_3$  in Lemma~\ref{lem:projection-kernel}, and the fact the $\ell_i$
are one-point marginals, we may explicit determine $P_k$ as follows: \begin{equation}\label{eq:Pk-J}
 \begin{aligned}
 P_3&= J_3,&\qquad
 P_2&=J_2-3J_3\\ 
 P_1&=\ell_4+\ell_5+\ell_6-2J_2+3J_3, & \qquad 
 P_0&=1-\sum_{i=1}^3P_i=1- (\ell_4+\ell_5+\ell_6)+J_2-J_3.
 \end{aligned}
\end{equation}

We provide an inequality relating the determinant $\cJ(\rho)$ and a suitable function of $P_i$. 
\begin{lemma}\label{lem:four-state}
For every positive Selling vector $\rho$,
\begin{equation}\label{eq:four-state}
 \cJ(\rho)\ge  
 \frac{4P_0^{3/2}}{(L)^{3/2}}
 +\frac{2P_1^{3/2}}{L\sqrt {\ell_4+\ell_5+\ell_6}}
 +\frac{P_2^{3/2}}{\sqrt{L(\ell_4\ell_5+\ell_5\ell_6+\ell_4\ell_6)}}
 +\frac{P_3^{3/2}}{2\sqrt{\ell_4\ell_5\ell_6}}:=\Phi(\rho)
\end{equation}
where $L=\ell_0+\ell_1+\ell_2+\ell_3$ (see \eqref{eq:L-T}). 
At BCC (that is $a=b=c=d=e=f=1$) equality holds in \eqref{eq:four-state}.
\end{lemma}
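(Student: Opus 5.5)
The plan is to rewrite $\cJ(\rho)$ as a sum over the $29$ bases $I\in\Omega$ with weights given by $\tau_q$. I will then group the bases into the four classes $\mathcal B_0,\dots,\mathcal B_3$ and apply a Hölder (Jensen) inequality inside each class. This reduces everything to elementary upper bounds on certain sums of products of leverage scores.

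\emph{Step 1: exact formula.} By Cauchy--Binet applied to $M(\rho)=\sum_i (q_i/\sqrt{c_i})\,z_iz_i^\trans$,
\[
\det M(\rho)=\sum_{I}\kappa_I\prod_{i\in I}\frac{q_i}{\sqrt{c_i}} .
\]
By Lemma~\ref{lem:Pq} and \eqref{eq:tau-u} we have $\prod_{i\in I}q_i=\tau_q(I)D^2/\kappa_I$, and by \eqref{eq:ell} we have $c_i=\ell_iD/q_i$. Hence
\[
\sqrt{\textstyle\prod_{i\in I}c_i}=D^{3/2}\Big(\prod_{i\in I}\ell_i\Big)^{1/2}\Big(\prod_{i\in I}q_i\Big)^{-1/2}.
\]
Substituting both identities, the powers of $D$ cancel against $D^{3/2}$, and I expect
\[
\cJ(\rho)=\sum_{I\in\Omega}\frac{\tau_q(I)^{3/2}}{\sqrt{w_I}},\qquad w_I:=\kappa_I\prod_{i\in I}\ell_i .
\]

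\emph{Step 2: Hölder within each class.} For nonnegative $x_I$ and positive $w_I$, Hölder's inequality (equivalently, convexity of $(x,w)\mapsto x^{3/2}w^{-1/2}$, which is jointly convex and $1$-homogeneous) gives
\[
\sum_I x_I^{3/2}w_I^{-1/2}\ge \Big(\sum_I x_I\Big)^{3/2}\Big(\sum_I w_I\Big)^{-1/2}.
\]
Applying this on each $\mathcal B_k$ yields $\cJ\ge\sum_k P_k^{3/2}W_k^{-1/2}$ with $W_k=\sum_{I\in\mathcal B_k}w_I$. It then suffices to bound each $W_k$ from above, reading off the nonzero $\kappa_I$ from \eqref{eq:Pexplicit}.

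\emph{Step 3: bounding the four sums.} Write $L=\ell_0+\ell_1+\ell_2+\ell_3$ as in \eqref{eq:L-T}.
\begin{itemize}
\item[$k=3$:] The only basis is $\{4,5,6\}$, with $\kappa=4$. So $W_3=4\ell_4\ell_5\ell_6$, which gives the last term of $\Phi$ exactly.
\item[$k=2$:] All bases have $\kappa=1$, and they are the pairs from $\{4,5,6\}$ together with any singleton from $\{0,\dots,3\}$. So $W_2=L\,(\ell_4\ell_5+\ell_4\ell_6+\ell_5\ell_6)$.
\item[$k=1$:] Consider the terms $u_4(u_0+u_1)(u_2+u_3)$ and its analogues in \eqref{eq:Pexplicit}. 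They give
\[
W_1=\ell_4(\ell_0+\ell_1)(\ell_2+\ell_3)+\ell_5(\ell_0+\ell_2)(\ell_1+\ell_3)+\ell_6(\ell_0+\ell_3)(\ell_1+\ell_2).
\]
By AM--GM each product is at most $L^2/4$, so $W_1\le(\ell_4+\ell_5+\ell_6)L^2/4$. This yields the coefficient $2/(L\sqrt{\ell_4+\ell_5+\ell_6})$.
\item[$k=0$:] Here $W_0=e_3(\ell_0,\dots,\ell_3)\le 4(L/4)^3=L^3/16$ by Maclaurin's inequality. This yields $4P_0^{3/2}/L^{3/2}$.
\end{itemize}
Since $W_k\mapsto P_k^{3/2}W_k^{-1/2}$ is decreasing, replacing each $W_k$ by its upper bound gives \eqref{eq:four-state}.

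\emph{Step 4: equality at BCC.} At $\rho=(1,\dots,1)$ the $S_4$-symmetry forces $\ell_0=\dots=\ell_3$ and $\ell_4=\ell_5=\ell_6$. Then the AM--GM and Maclaurin bounds are equalities. Within each class, $\tau_q(I)=\kappa_I\prod_{i\in I}q_i/D^2$ with $q_i$ constant on the singleton and double-cut groups, so $\tau_q(I)/w_I$ is constant on each $\mathcal B_k$. This is exactly the Hölder equality condition.

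The main obstacle I expect is Step 1. I have to check carefully that all factors of $D$ cancel and that $\kappa_I$ enters as $\kappa_I^{-1/2}$; this matters for $\{4,5,6\}$, where $\kappa=4$ produces the factor $1/2$. The remaining steps are routine once that identity is in place.
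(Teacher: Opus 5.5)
Your proposal is correct and follows the paper's proof essentially step by step. It uses the same identity $\cJ(\rho)=\sum_I\tau_q(I)^{3/2}/\sqrt{w_I}$ (the paper's $a_I$), Hölder's inequality within each class $\mathcal B_k$, and the same Maclaurin and $(\ell_i+\ell_j)(L-\ell_i-\ell_j)\le L^2/4$ bounds. The only difference is at BCC, where you check equality through the equality cases of Hölder (constancy of $\tau_q(I)/w_I$ on each $\mathcal B_k$), AM--GM and Maclaurin, whereas the paper uses direct substitution; both are valid.
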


\begin{proof}
For each  of the $29$ bases \(I\) with \(\kappa_I>0\), put
\[
 a_I=\kappa_I\prod_{i\in I}\ell_i.
\]
By Cauchy--Binet formula  and \eqref{eq:M-J},
\[
 \cJ(\rho)
 =
 \frac{1}{D^{3/2}(\rho)}
 \sum_{I:\,\kappa_I>0}
 \kappa_I\prod_{i\in I}\frac{q_i}{\sqrt{c_i}} =
 \sum_{I:\,\kappa_I>0}
 \frac{
 \kappa_I\prod_{i\in I}q_i
 }{
 D^{3/2}(\rho)\sqrt{\prod_{i\in I}c_i}
 }.
\]
On the other hand, by \eqref{eq:tau-u}, \eqref{eq:Pq}, and
\eqref{eq:ell},
\[
 \tau_q(I)
 =
 \frac{\kappa_I\prod_{i\in I}q_i}{D^2(\rho)},
 \qquad
 a_I
 =
 \frac{
 \kappa_I\prod_{i\in I}q_ic_i
 }{D^3(\rho)}.
\]
Therefore
\[
 \frac{\tau_q(I)^{3/2}}{\sqrt{a_I}}
 =
 \frac{
 \kappa_I\prod_{i\in I}q_i
 }{
 D^{3/2}(\rho)\sqrt{\prod_{i\in I}c_i}
 }.
\]
As a consequence,
\begin{equation}\label{eq:J-Renyi-sum}
 \cJ(\rho)
 =
 \sum_{I:\,\kappa_I>0}
 \frac{\tau_q(I)^{3/2}}{\sqrt{a_I}}.
\end{equation}

  Recalling the definition of $\mathcal B_k$ and $P_k$ above,    H\"older's inequality gives \[P_k^{3/2}=\left(\sum_{I\in\mathcal B_k}\tau_q(I)\right)^{\frac32} \leq 
\left(\sum_{I\in\mathcal B_k}a_I\right)^{\frac12}\sum_{I\in\mathcal B_k}\frac{\tau_q(I)^{3/2}}{\sqrt{a_I}}\] therefore letting $M_k:=\sum_{I\in\mathcal B_k}a_I$ we obtain 
\begin{equation}\label{eq:group-holder}\cJ(\rho) \geq \sum_{k=0}^{3}\frac{P_k^{3/2}}{\sqrt{M_k}}.
\end{equation}
From Cauchy-Binet formula as in ~\eqref{eq:Pexplicit}
we compute explicitly
\[ \begin{aligned}
 M_0&=\ell_0\ell_1\ell_2+\ell_0\ell_1\ell_3+\ell_0\ell_2\ell_3+\ell_1\ell_2\ell_3,\\
 M_1&=\ell_4(\ell_0+\ell_1)(\ell_2+\ell_3)
       +\ell_5(\ell_0+\ell_2)(\ell_1+\ell_3)+\ell_6(\ell_0+\ell_3)(\ell_1+\ell_2),\\
     M_2&=L(\ell_4\ell_5+\ell_4\ell_6+\ell_5\ell_6),\\M_3&=4\ell_4\ell_5\ell_6.  
 \end{aligned}
\]
 
From 
Maclaurin's inequality 
\[
M_0=\ell_0\ell_1\ell_2+\ell_0\ell_1\ell_3+\ell_0\ell_2\ell_3+\ell_1\ell_2\ell_3
\le
\binom{4}{3}
\left(
\frac{\ell_0+\ell_1+\ell_2+\ell_3}{4}
\right)^3
=
\frac{L^3}{16}
\]
and from the inequality
$(\ell_i+\ell_j)(L-\ell_i-\ell_j)\le L^2/4$, for $i\neq j\in\{0,1,2,3\}$ we get
\begin{equation}\label{eq:Mk-upper}
 M_1\le\frac{L^2(\ell_4+\ell_5+\ell_6)}{4}.
\end{equation}
If we substitute in    \eqref{eq:group-holder}, we obtain 
proves \eqref{eq:four-state}.  

Observe now that at $\BCC$, there hold: $\ell_i=\frac{9}{16}$ for $i=0,1,2,3$, $\ell_i=\frac{1}{4}$ for $i=4,5,6$ and  
\[
 (P_0,P_1,P_2,P_3)=\frac1{64}(27,27,9,1),
\]
and direct substitution gives $\Phi(1,1,1,1,1,1)=\cJ_{\BCC}=\cJ(1,1,1,1,1,1)$.
\end{proof}
Now we reduced to prove that $\BCC$ that is $\rho=(1,1,1,1,1,1)$
is a minimizer of the function $\Phi(\rho)$, defined in \eqref{eq:four-state}.

In the following proof we use repeatedly the elementary support inequality: for every $x,h\geq 0$, $m>0$ there holds
\begin{equation}\label{eq:support-32}
 \frac{x^{3/2}}{\sqrt m}
 \ge hx-\frac{4m}{27}h^3.
\end{equation}
Indeed the minimum of the function $\psi(x)=\frac{x^{3/2}}{\sqrt m}-hx$   occurs at
$x=4mh^2/9$.

\begin{theorem}\label{thm:four-state-scalar}
For every positive Selling vector $\rho$,
\begin{equation}\label{eq:four-state-C}
\Phi(\rho)\ge \Phi(1,1,1,1,1,1)=\cJ_{\BCC}=\frac{37+30\sqrt3}{128}
\end{equation}
where $\Phi$
 is defined in \eqref{eq:four-state}. Equality is possible only when $\rho=(1,1,1,1,1,1)$, that is
\begin{equation}\label{eq:four-state-equality} \ell_4=\ell_5=\ell_6=\frac14.
\end{equation}
\end{theorem}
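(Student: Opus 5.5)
The plan is to reduce $\Phi$ to a function of a few scalar quantities and apply the support inequality \eqref{eq:support-32} term by term, with slopes chosen so that the linear terms recombine into a quantity controlled by Lemma~\ref{thm:leverage-product} and Proposition~\ref{thm:weak-correlation}.

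\textbf{Variables.} Set $s=\ell_4+\ell_5+\ell_6$, $p=\ell_4\ell_5+\ell_4\ell_6+\ell_5\ell_6$ and $\pi=\ell_4\ell_5\ell_6$. Then $L=3-s$ by \eqref{eq:ell}. By \eqref{eq:Pk-J}, the masses $P_0,\dots,P_3$ are affine in $(s,J_2,J_3)$.

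Write $J_2=p-\varepsilon$, where $\varepsilon=\mathscr C/\Delta^2\ge0$. Proposition~\ref{thm:weak-correlation} gives
\[
\varepsilon\le\frac{(1-s)^2-4\pi}{1-s},
\]
and Lemma~\ref{thm:leverage-product} gives $4\pi\le(1-s)^2$. Lemma~\ref{lem:projection-kernel} gives $0\le J_3\le\pi$. Thus $\Phi$ becomes an explicit function of $(s,p,\pi,\varepsilon,J_3)$ on a region cut out by these constraints together with Maclaurin-type relations among $s,p,\pi$.

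\textbf{Support inequality.} For each $k$ I apply \eqref{eq:support-32} with $x=P_k$ and with $m$ equal to the squared denominator:
\[
m_0=\frac{L^3}{16},\qquad m_1=\frac{L^2s}{4},\qquad m_2=Lp,\qquad m_3=4\pi.
\]
The slope is $h_k=\frac32\sqrt{P_k^*/m_k^*}$, the tangency value at BCC. There $L=9/4$, $s=3/4$, $p=3/16$, $\pi=1/64$, and $P^*=\frac1{64}(27,27,9,1)$.

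This gives
\[
\Phi\ge\sum_k h_kP_k-\frac4{27}\sum_k m_kh_k^3 .
\]
The first sum is affine in $J_2$ and $J_3$. I then check the signs of the coefficients of $J_2$ and $J_3$. If the coefficient of $J_3$ is negative, I substitute $J_3\le\pi$. The coefficient of $J_2$ multiplies $-\varepsilon$, so I bound $\varepsilon$ by the correlation estimate.

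What remains is a function of $(s,p,\pi)$ alone. I eliminate $p$ and $\pi$ by monotonicity. For fixed $s$, the constraints $p\le s^2/3$, $\pi\le s^3/27$ and $\pi\le(1-s)^2/4$ should let me push to the symmetric point $\ell_4=\ell_5=\ell_6=s/3$. This leaves a one-variable inequality in $s\in(0,1)$, which I verify has its unique minimum at $s=3/4$, for instance by factoring out $(s-3/4)^2$ as in the proof of Proposition~\ref{thm:weak-correlation}.

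\textbf{Main obstacle.} The main difficulty is that fixed slopes will likely be too crude away from BCC, because the $m_k$ depend on $s,p,\pi$. If so, I would instead let the slopes depend on $s$, taking $h_k(s)$ optimal for the symmetric configuration at each $s$. Alternatively, I would minimise $\Phi$ exactly in the $P_k$ variables: for fixed $(s,p,\pi)$, $\Phi$ is convex in $(J_2,J_3)$, so its minimum over the admissible polytope is attained at a vertex or at an interior critical point found by Lagrange multipliers.

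\textbf{Equality.} All the inequalities used must be tight. In particular Proposition~\ref{thm:weak-correlation} forces the opposite-edge locus $a=f$, $b=e$, $c=d$, and the scalar inequality forces $\ell_4=\ell_5=\ell_6=\frac14$. On that locus the latter gives $af=be=cd$ and equal cut weights, hence $\rho$ proportional to $(1,\dots,1)$.
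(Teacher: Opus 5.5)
Your mechanism is exactly the paper's argument on $1/8\le L-2<1$, but it cannot be run on all of $s\in(0,1)$. That mechanism is: tangent slopes $(2/\sqrt3,1,\sqrt3/2,3/4)$ at BCC; then $J_3\le\pi$, valid because the $J_3$-coefficient $(45-26\sqrt3)/12$ is negative; then the correlation lower bound for $J_2$, valid because its coefficient $c_2=(7\sqrt3-12)/6$ is positive.

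Put $t=L-2=1-s$. Your substitutions, together with the bound on $p$ discussed below, give $\Phi-\cJ_{\BCC}\ge G(t,\pi)$ with $\partial_\pi G=-N(t)/(24t(1-t))$, where $N(t)=(84-49\sqrt3)t^2+(107\sqrt3-168)t+192-112\sqrt3$. Since $N(0)=192-112\sqrt3<0$, the monotonicity in $\pi$ reverses for $t\lesssim0.115$. As $t\to0$, the bound tends to $(1953-1130\sqrt3)/31104<0$. No sharper elimination of $p,\pi$ can rescue this: there $\pi\le t^2/4\to0$, and $p\le 1/4$ is attained at $(\tfrac12,\tfrac12,0)$. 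Small $t$ really occurs: $a=f=1$, $b=c=d=e=\epsilon$ gives $T/D\approx\epsilon$. So a separate argument near $L-2=0$ is unavoidable. Your fallbacks ($s$-dependent slopes, exact minimisation in $(J_2,J_3)$) are in the right spirit but are not carried out. The paper treats $0<L-2\le1/8$ with a single slope $h=3/(2\sqrt2)$ on the first three terms of $\Phi$. It discards the fourth term and uses $P_0+P_1+P_2=1-P_3\ge1-(L-2)^2/4$. This yields a decreasing one-variable bound $B(L-2)$ with $B(1/8)>\cJ_{\BCC}$.

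The elimination of $p$ and $\pi$ also fails as you state it.

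For $s>3/4$, the cap $\pi\le(1-s)^2/4$ lies strictly below $s^3/27$. So the symmetric point is infeasible, and the extremal case is $\pi=(1-s)^2/4$, not $\ell_4=\ell_5=\ell_6$.

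Moreover, the decoupled bounds $p\le s^2/3$, $\pi\le\dots$ do not give the monotonicity you want. After the $J_2,J_3$ substitutions, the coefficient of $\pi$ is $4c_2/t+(45-26\sqrt3)/12-\tfrac14$, which is positive for $t<0.33$. With $p=s^2/3$ the minimum therefore sits at $\pi=0$, and it is already negative at $t=1/4$.

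The missing ingredient is Schur's inequality $p\le(s^3+9\pi)/(4s)$. The coefficient of $p$, namely $-(\sqrt3t-19\sqrt3+36)/18$, is negative, so Schur transfers it onto $\pi$. This produces exactly $\partial_\pi G=-N(t)/(24t(1-t))$, which is negative for $t\ge1/8$. One then uses $\pi\le t^2/4$ on $[1/8,1/4]$ and $\pi\le s^3/27$ on $[1/4,1)$.

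The resulting one-variable function vanishes only to first order at $t=1/4$ from each side. This is a corner where the binding constraint switches, so factoring out $(s-3/4)^2$ will not work. Each branch needs its own sign analysis, as in the paper's factorizations involving $Q_-$ and $Q_+$.
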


\begin{proof} Recall by   Lemma~\ref{thm:leverage-product},
Propositio~\ref{thm:weak-correlation}, and
Lemma~\ref{lem:projection-kernel}.
The physical constraints used below are
\begin{equation}\label{eq:four-state-physical}
\ell_4\ell_5\ell_6\le\frac{(L-2)^2}{4},\qquad
 J_2\ge \ell_4\ell_5+\ell_4,\ell_6+\ell_5\ell_6-\frac{(L-2)^2-4\ell_4\ell_5\ell_6}{L-2},\qquad
J_3\le \ell_4\ell_5\ell_6
\end{equation} where $L=\ell_0+\ell_1+\ell_2+\ell_3$. 
  We also use Schur's inequality and the AM--GM bound 
\begin{equation}\label{eq:schur-nup}
 \ell_4\ell_5+\ell_4,\ell_6+\ell_5\ell_6 \le\frac{(\ell_4+\ell_5+\ell_6)^3+9\ell_4\ell_5\ell_6}{4(\ell_4+\ell_5+\ell_6)},\qquad \ell_4\ell_5+\ell_6\leq \frac{(\ell_4+\ell_5+\ell_6)^3}{27}.
\end{equation}
We divide the proof in $3$ cases, according to the value of $L-2$. 

\smallskip
\noindent\emph{Range I: $0<L-2\le1/8$.}
Apply \eqref{eq:support-32} to the first three terms of
\eqref{eq:four-state} with the common slope
\[
 h=\frac{3}{2\sqrt2},
\]
and discard the last nonnegative term.  Since
$4h^3/27=1/(4\sqrt2)$,
\[
 \Phi(\rho)\ge
 h(1-P_3)-\frac1{4\sqrt2}
 \left(\frac{L^3}{16}+\frac{L^2(\ell_4+\ell_5+\ell_6)}{4}+L(\ell_4\ell_5+\ell_4\ell_6+\ell_5\ell_6)\right).
\]
Using $P_3=J_3\le \ell_4\ell_5\ell_6\le (L-2)^2/4$ and, from
\eqref{eq:schur-nup},
we obtain
\begin{equation}\label{eq:boundary-support}
 \Phi(\rho)\ge B(L-2) \end{equation} where \[B(t):=
 -\frac{\sqrt2\,(t-4)(t+2)(t^2+10t-8)}{128(t-1)}.
\]
Moreover
\[
 B'(t)=
 -\frac{3\sqrt2\,t(t^3+4t^2-20t+24)}{128(t-1)^2}<0
 \qquad(0<t\le1/8),
\]
since $t^3+4t^2-20t+24>24-20/8>0$.  Hence $B(t)\ge B(1/8)$, where
\[
 B(1/8)=\frac{227137\sqrt2}{458752}.
\]
The elementary rational estimates
\[
 \sqrt2>\frac{140}{99},\qquad \sqrt3<\frac{26}{15}
\]
follow by squaring, and give
\[
 B(1/8)-\cJ_{\BCC}
 >\frac{7877}{1622016}>0.
\]
Thus the inequality is strict in Range I.

\smallskip
\noindent\emph{Ranges II--III: $1/8\le L-2<1$.}
Apply \eqref{eq:support-32} to all four terms in
\eqref{eq:four-state} with 
respectively $(x_0, x_1,x_2,x_3)=(P_0,P_1, P_2, P_3)$, 
\[(m_0,m_1,m_2,m_3)
 =\left(\frac{L^3}{16}, \frac{L^2(\ell_4+\ell_5+\ell_6)}{4}, L(\ell_4\ell_5+\ell_5\ell_6+\ell_4\ell_6), 4\ell_4\ell_5\ell_6\right)\] 
\begin{equation}\label{eq:BCC-support-slopes}
 (h_0,h_1,h_2,h_3)
 =\left(\frac2{\sqrt3},1,\frac{\sqrt3}{2},\frac34\right).
\end{equation}
These are precisely the four support slopes at BCC.  Thus
\begin{equation}\label{eq:BCC-support}
 \Phi(\rho)\ge
 \sum_{k=0}^3h_kP_k
 -\frac4{27}\left(
 \frac{L^3}{16}h_0^3+\frac{L^2(\ell_4+\ell_5+\ell_6)}{4}h_1^3
+L(\ell_4\ell_5+\ell_5\ell_6+\ell_4\ell_6)h_2^3+4\ell_4\ell_5\ell_6h_3^3\right).
\end{equation}
Substituting to $P_i$ the explicit formula in \eqref{eq:Pk-J} we obtain that the  coefficients of $J_2$ and $J_3$ in the right-hand side are respectively
\[
 \frac{-12+7\sqrt3}{6}>0,
 \qquad
 \frac{45-26\sqrt3}{12}<0.
\]
  Hence, recalling \eqref{eq:four-state-physical}, we    substitute $J_2$ with $\ell_4\ell_5+\ell_4,\ell_6+\ell_5\ell_6-\frac{(L-2)^2-4\ell_4\ell_5\ell_6}{L-2}$ and $J_3$ with $\ell_4\ell_5\ell_6$. 

After these substitutions the coefficient of $\ell_4\ell_5+\ell_4,\ell_6+\ell_5\ell_6 $ is
\[
 -\frac{\sqrt3\,(L-2)-19\sqrt3+36}{18}<0,
\]
so Schur's upper bound \eqref{eq:schur-nup} may also be used; here $36>19\sqrt3$.

Denote by $G(L-2,\ell_4\ell_5\ell_6)=G(t,p)$ the resulting right-hand side of
\eqref{eq:BCC-support} minus $\cJ_{\BCC}$.
A direct collection gives
\begin{equation}\label{eq:Gp-derivative}
 \frac{\partial G}{\partial p}
 =-\frac{N(t)}{24t(1-t)},
\end{equation}
where
\[
N(t)=(84-49\sqrt3)t^2+(-168+107\sqrt3)t+192-112\sqrt3.
\]
Since $84-49\sqrt3<0$, $N(t)$ is concave on $[0,1]$. Observing that 
$N'(1)=9\sqrt3>0$, it follows that $N(t)$ is also increasing on $[0,1]$. 
Moreover
\[
 N(1/8)=\frac{11028-6361\sqrt3}{64}>0,
\]
because $11028^2-3\cdot6361^2=229821>0$.
As a consequence we get
\begin{equation}\label{eq:Gp-negative}
 \frac{\partial G}{\partial p}<0
 \qquad(1/8\le t<1).
\end{equation}

If $1/8\le t\le1/4$, then $p\le t^2/4$, and
\eqref{eq:Gp-negative} gives
$G(t,p)\ge G(t,t^2/4)$.  The endpoint factors as
\begin{equation}\label{eq:G-left-factor}
 G\!\left(t,\frac{t^2}{4}\right)
 =\frac{(72-43\sqrt3)(4t-1)Q_-(t)}{3763584(t-1)},
\end{equation}
where
\[
 Q_-(t)=484t^3-12(345\sqrt3+119)t^2
 -9(2467\sqrt3+3326)t+1718-873\sqrt3.
\]
Since $72-43\sqrt3<0$ (because $72^2<3\cdot43^2$), it remains
only to note that $Q_-(t)<0$ on this interval.  Indeed
\[
 Q_-'(t)=3\{484t^2-(2760\sqrt3+952)t-(7401\sqrt3+9978)\}<0,
\]
because $t\le1/4$ gives $484t^2\le121/4<7401\sqrt3+9978$, and
\[
 Q_-(1/8)=-\frac{59409\sqrt3}{16}-\frac{261775}{128}<0.
\]
All signs in \eqref{eq:G-left-factor} therefore give
$G(t,t^2/4)\ge0$, with equality only at $t=L-2=1/4$.

Finally let $1/4\le L-2<1$.  Here $p\le s^3/27$, so
$G(t,p)\ge G(t,s^3/27)$.  This endpoint factors as
\begin{equation}\label{eq:G-right-factor}
 G\!\left(t,\frac{s^3}{27}\right)
 =\left(-\frac5{54}+\frac{13\sqrt3}{243}\right)
 \frac{t-1/4}{t}\,Q_+(t),
\end{equation}
where
\[
 Q_+(t)=t^3+\left(\frac{37}{4}+6\sqrt3\right)t^2
 +\left(\frac{9589}{16}+\frac{705\sqrt3}{2}\right)t
 -96-48\sqrt3.
\]
The prefactor is positive because $(26\sqrt3)^2>45^2$.
Furthermore $Q_+'(t)>0$ for $t>0$ and
\[
 Q_+(1/4)=\frac{3483}{64}+\frac{81\sqrt3}{2}>0.
\]
Thus \eqref{eq:G-right-factor} is nonnegative, again with equality only
at $t=1/4$.

We have proved \eqref{eq:four-state-C}.  Equality forces $L-2=1/4$ and,
from the decreasing $p$-step above,
$\ell_4\ell_5\ell_6=(L-2)^2/4=1/64$.  Since also $\ell_4+\ell_5+\ell_6=3/4$, equality in AM--GM gives
$\ell_4=\ell_5=\ell_6=1/4$, proving \eqref{eq:four-state-equality}.
\end{proof}

\subsection{Main result}
By Lemma \ref{lem:four-state} and Theorem \ref{thm:four-state-scalar} above, we deduce the determinantal inequality \eqref{eq:sharpdet}.
\begin{theorem}\label{thm:J-global}
For every positive Selling vector $\rho$,
\[
\cJ(\rho)\ge \cJ_{\BCC}=\frac{37+30\sqrt3}{128}.
\]
Equality holds only on the BCC ray.
\end{theorem}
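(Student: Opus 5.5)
The statement follows by chaining Lemma~\ref{lem:four-state} with Theorem~\ref{thm:four-state-scalar}. For a positive Selling vector $\rho$, Lemma~\ref{lem:four-state} gives $\cJ(\rho)\ge\Phi(\rho)$. This lower bound comes from the Cauchy--Binet representation \eqref{eq:J-Renyi-sum}, grouped Hölder \eqref{eq:group-holder}, and the Maclaurin-type upper bounds on $M_0,M_1$. Theorem~\ref{thm:four-state-scalar} then gives $\Phi(\rho)\ge\cJ_{\BCC}$, and the value $\cJ_{\BCC}=(37+30\sqrt3)/128$ was computed in \eqref{eq:constants}. Together these yield $\cJ(\rho)\ge\cJ_{\BCC}$.

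For the equality case, suppose $\cJ(\rho)=\cJ_{\BCC}$. Then both inequalities in the chain are equalities, and in particular $\Phi(\rho)=\cJ_{\BCC}$. Theorem~\ref{thm:four-state-scalar} then forces $L-2=1/4$ and $\ell_4\ell_5\ell_6=(L-2)^2/4$, with $\ell_4=\ell_5=\ell_6=1/4$. Equality in Lemma~\ref{thm:leverage-product} places $\rho$ on the opposite-edge locus $a=f$, $b=e$, $c=d$.

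The main obstacle is passing from this locus to the BCC ray. I would do this by direct computation on the three-parameter family $\rho=(a,b,c,c,b,a)$. Here $q_4=a^2$, $q_5=b^2$, $q_6=c^2$, and $c_4=2(b+c)$, $c_5=2(a+c)$, $c_6=2(a+b)$, while $D$ is the symmetric polynomial obtained from \eqref{eq:D}. The three conditions $\ell_4=\ell_5=\ell_6$ give $a^2(b+c)=b^2(a+c)=c^2(a+b)$. From $a^2(b+c)-b^2(a+c)=(a-b)(ab+bc+ca)$ and the positivity of $ab+bc+ca$, this yields $a=b$, and likewise $b=c$. So $\rho$ lies on the BCC ray, i.e.\ $\rho$ is a positive multiple of $(1,1,1,1,1,1)$.

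Conversely, on the BCC ray $\cJ$ is scale invariant, since $\det M$ and $D^{3/2}$ are both homogeneous of degree $9/2$. The value $\cJ(1)=\cJ_{\BCC}$ was already verified in \eqref{eq:constants}, so equality holds there and the equality set is exactly the BCC ray.
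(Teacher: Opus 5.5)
Your proposal is correct and follows the paper's proof essentially step by step. You chain Lemma~\ref{lem:four-state} with Theorem~\ref{thm:four-state-scalar}, use $\ell_4=\ell_5=\ell_6=1/4$ to force equality in Lemma~\ref{thm:leverage-product} and hence the opposite-edge locus, and then apply the factorization $(a-b)(ab+bc+ca)$ to reach the BCC ray. Your explicit check that $\det M$ and $D^{3/2}$ are both homogeneous of degree $9/2$ adds a small justification that equality holds along the whole ray, which the paper leaves implicit.
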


\begin{proof}
Lemma~\ref{lem:four-state} and
Theorem~\ref{thm:four-state-scalar} give
\[
 \cJ(\rho)\ge\Phi(\rho)\ge \cJ_{\BCC}.
\]
If equality holds, Theorem~\ref{thm:four-state-scalar} gives
$\ell_4=\ell_5=\ell_6=1/4$.  In particular
\[
 \ell_4\ell_5\ell_6=\frac1{64}=\frac{(L-2)^2}{4},
\]
so equality holds in Lemma~\ref{thm:leverage-product}.  Its equality
classification gives the opposite-edge family
$a=f=A$, $b=e=B$, $c=d=C$.  Equality of the three double leverage scores
then reads
\[
 A^2(B+C)=B^2(A+C)=C^2(A+B).
\]
Since
\[
 A^2(B+C)-B^2(A+C)=(A-B)\{AB+C(A+B)\},
\]
and the bracket is positive, $A=B$; cyclically $A=B=C$.  Thus the Selling
vector lies on the BCC ray.  Conversely direct substitution gives equality
at BCC.
\end{proof}

\begin{theorem}[Global minimality of BCC]\label{thm:BCC-global}
For every nonnegative Selling vector $\rho$ defining a nondegenerate full-rank
lattice Voronoi cell,
\[
 F(\rho)\ge F_{\BCC}.
\]
Equality holds only on the BCC ray.  Therefore the regular truncated octahedron is the unique global minimizer
among all three-dimensional lattice Voronoi cells, up to similarity.
\end{theorem}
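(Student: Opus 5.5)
The plan is to split the nonnegative Selling cone into its interior and its nondegenerate boundary, and to treat each with results already established. Proposition~\ref{thm:selling-reduction} gives every full-rank lattice $\Lambda\subset\R^3$ an obtuse superbase, hence a Selling vector $\rho\in\R^6_{\ge0}$ with $D(\rho)>0$. Proposition~\ref{prop:surface-formula} then gives $\Iso(\Vor(\Lambda))=F(\rho)$. The identification of $\Lambda$ up to orthogonal isometry, and the $S_4$-invariance of $F$, make it enough to work directly with $\rho$.

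\emph{Interior case.} Suppose all six $\rho_{ij}>0$. By the spectral AM--GM estimate \eqref{eq:spectral}, $F(\rho)\ge 6\cJ(\rho)^{1/3}$. By Theorem~\ref{thm:J-global}, $\cJ(\rho)\ge\cJ_{\BCC}$, and by \eqref{eq:constants}, $6\cJ_{\BCC}^{1/3}=F_{\BCC}$. Together these give $F(\rho)\ge F_{\BCC}$. If equality holds, then equality must hold in Theorem~\ref{thm:J-global}, which puts $\rho$ on the BCC ray. Conversely, on the BCC ray the matrix $A^{1/2}MA^{1/2}$ is scalar, so equality is attained there, as already observed after \eqref{eq:spectral}.

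\emph{Boundary case.} Suppose some $\rho_{ij}=0$ while $D(\rho)>0$. By Corollary~\ref{cor:twelve-facets-boundary}, this is exactly the case where the cell has at most twelve facets. Theorem~\ref{thm:FCC} then gives $F(\rho)\ge F_{\FCC}$. It remains to compare the two constants: $F_{\FCC}=3\cdot2^{5/6}\approx5.3454$ and $F_{\BCC}=3(1+2\sqrt3)/4^{2/3}\approx5.3147$. I would verify $F_{\FCC}>F_{\BCC}$ rigorously by clearing radicals. The inequality is equivalent to $2^{5/6}4^{2/3}>1+2\sqrt3$, that is $2^{13/6}>1+2\sqrt3$. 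Raising both sides to the sixth power gives $2^{13}=8192$ on the left, and $(1+2\sqrt3)^6<(4.4642)^6\approx 7906$ on the right, using the bound $\sqrt3<1.7321$. So every boundary point satisfies $F(\rho)\ge F_{\FCC}>F_{\BCC}$ strictly, and no boundary point attains equality.

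Combining the two cases gives $F\ge F_{\BCC}$ on the whole nondegenerate cone, with equality only on the BCC ray. The BCC ray corresponds to lattices similar to BCC, whose Voronoi cell is the regular truncated octahedron, so that cell is the unique minimizer up to similarity.

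There is no serious obstacle here, since the heavy lifting is done by Theorems~\ref{thm:J-global} and~\ref{thm:FCC}. The only points needing care are the rigorous numerical comparison of the two constants, and checking that the equality case of Theorem~\ref{thm:J-global} combines correctly with equality in the AM--GM step \eqref{eq:spectral}. The latter is automatic: equality in the chain forces equality in both inequalities.
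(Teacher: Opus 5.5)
Your proposal is correct and is essentially the paper's proof. The interior case is handled by \eqref{eq:spectral} together with Theorem~\ref{thm:J-global}, and the boundary case by Theorem~\ref{thm:FCC} plus the strict comparison $F_{\FCC}>F_{\BCC}$; the paper checks that comparison in the cubed form $64\sqrt2>37+30\sqrt3$, and your sixth-power version $2^{13}>(1+2\sqrt3)^6$ is just its square.

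One small numerical slip: $(1+2\sqrt3)^6=4069+2220\sqrt3\approx 7914$, and your bound $(4.4642)^6\approx 7915$, not $7906$. Both are still well below $8192$, so the argument is unaffected.
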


\begin{proof}
In the positive interior, \eqref{eq:spectral} and the preceding theorem give
\[
 F(\rho)\ge6\cJ^{1/3}(\rho)\ge6\cJ_{\BCC}^{1/3}=F_{\BCC}.
\]
Equality is attained only at BCC.  On the nondegenerate boundary,
Theorem~\ref{thm:FCC} gives $F(\rho)\ge3\,2^{5/6}>F_{\BCC}$.  The strict
comparison is exact: after cubing and cancelling the common positive factor it
is equivalent to
\[
 64\sqrt2>37+30\sqrt3.
\]
Indeed $\sqrt2>7/5$ and $\sqrt3<7/4$, so that
$64\sqrt2>448/5>179/2>37+30\sqrt3$.
\end{proof}

\section{Extension to all parallelohedra}
\label{sec:affine-extension}

This section develops the part of the Truncated Octahedron Conjecture which is
not contained in the lattice--Voronoi theorem.  The key point is that the
weighted $K_4$ combinatorics used throughout the paper survives unchanged:
what is lost outside the Voronoi subclass is only the relation tying the
Euclidean metric to the six graphical weights.

\subsection{A graphical parametrization}

We retain from Section~\ref{sec:geometry} the identification
\[
 E(K_4)=\{01,02,03,12,13,23\}
\]
with the fixed edge order
\[
 (01,02,03,12,13,23),
\]
and the corresponding incidence roots
\[
 r_a=e_1,\quad r_b=e_2,\quad r_c=e_3,\quad
 r_d=e_1-e_2,\quad r_e=e_1-e_3,\quad r_f=e_2-e_3.
\]
Thus $r_a,\ldots,r_f$ are, up to sign, the six edge vectors of the
reference realization of $K_4$ on
\[
 p_0=0,\qquad p_1=e_1,\qquad p_2=e_2,\qquad p_3=e_3.
\]

We now regard
\[
 \omega=(\omega_{01}, \omega_{02},\omega_{03},\omega_{12},\omega_{13}, \omega_{23})=(a,b,c,d,e,f)\in\mathbb R^6, \qquad a,b,c,d,e,f\geq 0
\]
as an arbitrary nonnegative weighting of the six edges and put
\begin{equation}\label{eq:affine-Z}
 Z_\omega=\sum_{\xi\in\{a,b,c,d,e,f\}}
 \left[-\frac{ \xi r_\xi}{2},
       \frac{ \xi r_\xi}{2}\right].
\end{equation}
The polynomials $D(\omega)$ and $q_i(\omega)$ are the same spanning-tree
and facet polynomials as in Section~\ref{sec:geometry}.

In the lattice--Voronoi subclass these graphical weights are precisely
the Selling conorms,
$
 \omega =\rho ,
$ and they determine the Selling Gram matrix
\[
 A(\omega)
 =\sum_{\xi\in\{a,b,c,d,e,f\}}
    \xi r_\xi r_\xi^{\mathsf T}.
\]
Equivalently, $A(\omega)$ is the reduced weighted Laplacian of $K_4$.
In the general affine problem the same weighted graphical zonotope
$Z_\omega$ is retained, while the Euclidean metric is allowed to vary independently.

\begin{theorem}\label{thm:affine-param}
Every three-dimensional parallelohedron is, up to translation, of the form
\begin{equation}\label{eq:P-LZ}
 P=LZ_\omega
\end{equation}
for some $L\in GL(3,\mathbb R)$ and some
$\omega=(a,b,c,d,e,f)\in\mathbb R^6$ with $ a,b,c,d,e,f\geq 0$ and  
$D(\omega)>0$.

For an edge $ij\in E(K_4)$, let $\overline{ij}=kl$ denote the
complementary edge, namely
\[
 \{i,j,k,l\}=\{0,1,2,3\}.
\]
After relabelling the four vertices, the five Fedorov types correspond
to the images, under the edge-complement involution
\[
 ij\longmapsto\overline{ij},
\]
of the support patterns described in \cite[Section~2]{Langi2022}.
In particular, they have respectively $6,5,4,4,3$ active graphical
generators.
\end{theorem}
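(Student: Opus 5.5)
Two classical facts do the work here: Voronoi's theorem that every three-dimensional parallelohedron is affinely equivalent to a lattice Voronoi cell, and Selling's reduction in Proposition~\ref{thm:selling-reduction}. Let $P$ be a parallelohedron. By Minkowski--Venkov, $P$ is centrally symmetric and tiles face-to-face with lattice $\Lambda_P$. In dimension three, Delone's theorem (Voronoi's conjecture holds for $d\le 4$) provides an affine map $T$ and a lattice $\Lambda=T\Lambda_P$ such that $TP$ is a translate of $\Vor(\Lambda)$. Choose an obtuse superbase of $\Lambda$ by Proposition~\ref{thm:selling-reduction}, with nonnegative Selling vector $\rho$ and $D(\rho)=\det A(\rho)>0$. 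Proposition~\ref{prop:voronoi-zonotope} then gives $\Vor(\Lambda)=O\,A(\rho)^{-1/2}Z_\rho$ for an orthogonal $O$. Setting $\omega=\rho$ and $L=T^{-1}OA(\rho)^{-1/2}\in GL(3,\mathbb R)$, we get $P=LZ_\omega$ up to translation. The conditions $\omega\ge0$ and $D(\omega)>0$ are inherited, which proves \eqref{eq:P-LZ}.

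For the combinatorial statement, note that the facet structure of $LZ_\omega$ depends only on the support graph $G_\omega$, because linear maps preserve combinatorial type. By Proposition~\ref{prop:zonotope-facets}, the facet pairs correspond to bipartitions $S\mid S^c$ with $G_\omega[S]$ and $G_\omega[S^c]$ both connected. Up to $S_4$, the connected spanning subgraphs of $K_4$ with $D>0$ can be enumerated by the number of edges:
\begin{itemize}
\item $K_4$ itself: $7$ facet pairs, the truncated octahedron;
\item $K_4$ minus one edge: $6$ pairs, the elongated dodecahedron;
\item the $4$-cycle: $6$ pairs, the rhombic dodecahedron;
\item the triangle with a pendant edge: $4$ pairs, the hexagonal prism;
\item spanning trees: $3$ pairs, the parallelepiped.
\end{itemize}
The counts are checked directly from the cut criterion. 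For example, for the $4$-cycle $0\!-\!1\!-\!2\!-\!3\!-\!0$, the four singletons and the $2+2$ cuts $\{0,1\}\mid\{2,3\}$ and $\{0,3\}\mid\{1,2\}$ qualify, but $\{0,2\}\mid\{1,3\}$ does not. This gives $6,5,4,4,3$ active generators.

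It remains to match these with L\'angi's patterns. In \cite[Section~2]{Langi2022} the parallelohedra are described as zonotopes generated by subsets of the six vectors $\pm(e_i\pm e_j)$-type directions, that is, edges of a tetrahedron indexed so that a generator corresponds to a pair of vertices. The planar dependencies there are triangles of the complementary labelling. I would verify that the edge-complement involution $ij\mapsto\overline{ij}$ turns L\'angi's list of support sets into the five graphs above: complementation fixes cardinality and maps the triangle-plus-pendant to a path-plus-edge class consistent with his prism pattern.

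I expect the main obstacle to be this bookkeeping. Identifying precisely which index convention in \cite{Langi2022} requires complementation is a labelling check, not a mathematical difficulty. The substantive input is Delone's affine Voronoi theorem, which I would simply cite.
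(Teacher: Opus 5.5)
Your proof of $P=LZ_\omega$ is correct, but it follows a different route from the paper. You use Delone's theorem (Voronoi's conjecture in dimension three) to make $P$ affinely equivalent to a lattice Voronoi cell. You then reuse the paper's lattice machinery (Proposition~\ref{thm:selling-reduction} and Proposition~\ref{prop:voronoi-zonotope}), so $\omega$ becomes the Selling vector of an affinely equivalent lattice.

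The paper instead starts from L\'angi's explicit form $\sum_{i<j}[0,\beta_{ij}(v_i\times v_j)]$ with $v_0+v_1+v_2+v_3=0$. It writes $v_i=Su_i$ for the centred reference tetrahedron and combines $(Su_i)\times(Su_j)=\det(S)S^{-\trans}(u_i\times u_j)$ with the identity $u_i\times u_j=\pm r_{\overline{ij}}$. This gives at once $L=\det(S)S^{-\trans}$ and $\omega_{\overline{ij}}=\beta_{ij}$.

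Your route uses a heavier classical theorem but stays inside the paper's own formalism. Your enumeration of connected spanning subgraphs of $K_4$ via Proposition~\ref{prop:zonotope-facets} is accurate. It gives the five types and the counts $6,5,4,4,3$ without any reference to L\'angi, and these are all that the later sections use. One small point: $T$ is affine, so $L$ should be built from its linear part.

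\textbf{The gap.} What you do not prove is the clause identifying the Fedorov types with the edge-complement images of L\'angi's support patterns. You only announce a check, and the representation you attribute to \cite[Section~2]{Langi2022} (generators of type $\pm(e_i\pm e_j)$) is not the one used there.

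This is not mere relabelling, because complementation does not preserve connectivity of a support graph. It exchanges the four stars with the four triangles; for example $\{01,02,03\}\mapsto\{12,13,23\}$, which isolates vertex $0$. The correspondence holds because $v_i\times v_j$, $v_i\times v_k$, $v_i\times v_l$ all lie in $v_i^{\perp}$. Hence stars are the dependent triples in L\'angi's labelling, while triangles are the dependent triples among the roots $r_\xi$.

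Your remark about ``triangles of the complementary labelling'' points the right way, but turning it into a proof requires exactly the relation $\omega_{\overline{ij}}=\beta_{ij}$, that is, the cross-product computation above. Your argument never passes through L\'angi's representation, so this step is genuinely missing. Once you add it, it also yields $P=LZ_\omega$ directly and makes the appeal to Delone's theorem unnecessary.
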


\begin{proof}
L\'angi's representation of three-dimensional parallelohedra
\cite{Langi2022} shows that a translate of every such polytope can be
written as
\[
 \sum_{0\le i<j\le3}
 [0,\beta_{ij}(v_i\times v_j)],
 \qquad \beta_{ij}\ge0,
\]
where
\[
 v_0+v_1+v_2+v_3=0.
\]
The vectors $v_0,\ldots,v_3$ form a nondegenerate centered tetrahedral
configuration: every three are independent and their unique linear relation
is $v_0+v_1+v_2+v_3=0$.  The lower Fedorov types are obtained from the same
nondegenerate quadruple by setting the appropriate coefficients $\beta_{ij}$
equal to zero.

Choose the centered reference tetrahedron
\[
 u_0=-(e_1+e_2+e_3),\qquad
 u_1=e_1,\qquad
 u_2=e_2,\qquad
 u_3=e_3.
\]
Since both $(u_i)_{i=0}^3$ and $(v_i)_{i=0}^3$ satisfy the same unique
linear relation with all coefficients equal to one, there exists
$S\in GL(3,\mathbb R)$ such that
\[
 v_i=Su_i,\qquad 0\le i\le3.
\]
The covariance formula for the cross product gives
\[
 (Su_i)\times(Su_j)
 =\det(S)S^{-\mathsf T}(u_i\times u_j).
\]
Thus all tetrahedral shape variables are absorbed in the single
invertible linear map
\[
 L:=\det(S)S^{-\mathsf T}.
\]

We now compare the six cross products of the reference tetrahedron with
the incidence roots
\[
 r_{01}=e_1,\qquad
 r_{02}=e_2,\qquad
 r_{03}=e_3,\qquad
 r_{12}=e_1-e_2,\qquad
 r_{13}=e_1-e_3,\qquad
 r_{23}=e_2-e_3.
\]
A direct computation gives
\[
\begin{aligned}
 u_2\times u_3&= r_{01},&
 u_1\times u_3&=-r_{02},&
 u_1\times u_2&= r_{03},\\
 u_0\times u_3&=-r_{12},&
 u_0\times u_2&= r_{13},&
 u_0\times u_1&=-r_{23}.
\end{aligned}
\]
Equivalently,
\begin{equation}\label{eq:complement-cross-root}
 u_i\times u_j
 =\varepsilon_{ij}r_{\overline{ij}},
 \qquad
 \varepsilon_{ij}\in\{-1,1\}.
\end{equation}
Hence the coefficient attached in L\'angi's representation to the pair
$ij$ becomes the graphical weight on the complementary edge:
\begin{equation}\label{eq:beta-omega-complement}
 \omega_{\overline{ij}}:=\beta_{ij}.
\end{equation}

Using \eqref{eq:complement-cross-root}, we therefore obtain, up to
translation,
\[
\begin{aligned}
 P
 &=
 L\sum_{0\le i<j\le3}
   [0,\beta_{ij}(u_i\times u_j)]\\
 &=
 L\sum_{0\le i<j\le3}
   [0,\varepsilon_{ij}\omega_{\overline{ij}}
   r_{\overline{ij}}].
\end{aligned}
\]
For any vector $g$, the segment $[0,g]$ is a translate of
$[-g/2,g/2]$, and replacing $g$ by $-g$ does not change the latter
centered segment.  Consequently the last Minkowski sum is, up to
translation, precisely
\[
 Z_\omega
 =
 \sum_{\xi\in E(K_4)}
 \left[-\frac{\omega_\xi r_\xi}{2},
       \frac{\omega_\xi r_\xi}{2}\right].
\]
This proves
\[
 P=LZ_\omega
\]
up to translation.

Since $P$ is three-dimensional and $L$ is invertible, $Z_\omega$ is
full-dimensional.  The graphical zonotope $Z_\omega$ is
full-dimensional if and only if the support graph of $\omega$ is
connected.  By the weighted matrix--tree theorem, this is equivalent to
\[
 D(\omega)>0.
\]

Finally, \eqref{eq:beta-omega-complement} shows that the support of the
graphical weight $\omega$ is obtained from the support of the
coefficients $\beta_{ij}$ by the edge-complement involution
$ij\mapsto\overline{ij}$.  Thus the Fedorov support patterns are the
complementary-edge images of those listed in
\cite[Section~2]{Langi2022}.  Since edge complementation is a bijection
of the six edges of $K_4$, the five types have respectively
$6,5,4,4,3$ active graphical generators.
\end{proof}

For $P=LZ_\omega$, define the determinant-one metric
\begin{equation}\label{eq:affine-Q}
 Q=|\det L|^{2/3}L^{-1}L^{-\mathsf T},
 \qquad Q>0,\qquad \det Q=1.
\end{equation}
Thus $Q$ records the five Euclidean shape degrees of freedom left after
removing scale and rotations.

\begin{proposition}
\label{prop:affine-surface}
For every $D(\omega)>0$ and every $Q>0$ with $\det Q=1$,
\begin{equation}\label{eq:affine-F}
 \Iso(\omega,Q)=
 \frac{2}{D(\omega)^{2/3}}
 \sum_{i=0}^6q_i(\omega)\sqrt{z_i^{\mathsf T}Qz_i}.
\end{equation}
The Voronoi realization is the codimension-five slice
\begin{equation}\label{eq:Vor-slice-Q}
 Q_{\rm Vor}(\omega)=D(\omega)^{-1/3}A(\omega).
\end{equation}
On this slice \eqref{eq:affine-F} reduces exactly to
\eqref{eq:closed-F}.
\end{proposition}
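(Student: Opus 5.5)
The plan is to compute facet areas of $P=LZ_\omega$ directly, in the same way as in the proof of Proposition~\ref{prop:surface-formula}, keeping $L$ general instead of $L=A(\omega)^{-1/2}$. First I would recall \eqref{eq:facet-Z-area}: the facet of $Z_\omega$ exposed by $z_i$ has area $q_i(\omega)|z_i|$. This step used only the planar zonotope formula and the incidence roots, so it is valid for arbitrary nonnegative weights. Facets with $q_i(\omega)=0$ are degenerate and contribute nothing. Hence the identity also covers the lower Fedorov types, where some cuts do not produce facets.

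Next I would apply the area-change rule for a linear map $L$ acting on a planar set with normal $n$: the area is multiplied by $|\det L|\,|L^{-\mathsf T}n|/|n|$. Taking $n=z_i$, the facet of $LZ_\omega$ with normal $L^{-\mathsf T}z_i$ has area $|\det L|\,q_i(\omega)\sqrt{z_i^{\mathsf T}L^{-1}L^{-\mathsf T}z_i}$. Summing over the seven opposite pairs gives
\[
\mathcal H^2(\partial P)=2|\det L|\sum_{i=0}^6 q_i(\omega)\sqrt{z_i^{\mathsf T}L^{-1}L^{-\mathsf T}z_i}.
\]
For the volume, $|P|=|\det L|\,|Z_\omega|=|\det L|\,D(\omega)$, using Shephard's formula as in Proposition~\ref{prop:voronoi-zonotope}. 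By \eqref{eq:affine-Q}, $L^{-1}L^{-\mathsf T}=|\det L|^{-2/3}Q$, so the surface area equals $2|\det L|^{2/3}\sum_i q_i\sqrt{z_i^{\mathsf T}Qz_i}$. Dividing by $|P|^{2/3}=|\det L|^{2/3}D(\omega)^{2/3}$ removes $L$ except through $Q$, and this yields \eqref{eq:affine-F}. Every $Q>0$ with $\det Q=1$ arises from some $L$ (take $L=Q^{-1/2}$), and then $\det Q=1$ holds automatically.

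For the Voronoi slice, Proposition~\ref{prop:voronoi-zonotope} gives $L=A(\omega)^{-1/2}$. Then $|\det L|^{2/3}=D^{-1/3}$ and $L^{-1}L^{-\mathsf T}=A(\omega)$, so $Q=D^{-1/3}A(\omega)$, which is \eqref{eq:Vor-slice-Q}. Substituting and using $z_i^{\mathsf T}A z_i=c_i$ gives $\sqrt{z_i^{\mathsf T}Qz_i}=D^{-1/6}\sqrt{c_i}$. Hence $\Iso=2D^{-5/6}\sum_i q_i\sqrt{c_i}$, which is \eqref{eq:closed-F}. The slice has codimension five because $A(\omega)$ is determined by $\omega$, while $Q$ ranges over a five-dimensional space.

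The only delicate point is to confirm that the facets of $Z_\omega$ are exactly the cut facets with normals $z_i$ and areas $q_i|z_i|$ for every support pattern, including those from Theorem~\ref{thm:affine-param}. No other facets may appear, and degenerate cuts must contribute zero. I would settle this by noting that every normal to a plane spanned by two independent roots is, up to scaling, one of the $z_i$. This holds because the roots form the graphic arrangement of $K_4$, whose rank-2 flats correspond to the seven cuts.
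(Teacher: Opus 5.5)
Your proposal is correct and follows the paper's proof essentially step for step. Both arguments take the facet areas $q_i|z_i|$ from \eqref{eq:facet-Z-area} and apply the area-change factor $|\det L|\,|L^{-\mathsf T}z_i|/|z_i|$, rewritten through $L^{-1}L^{-\mathsf T}=|\det L|^{-2/3}Q$; they then divide by the volume $|\det L|D(\omega)$ and set $L=A(\omega)^{-1/2}$ on the Voronoi slice. Your extra checks supply details the paper leaves implicit: that the seven cut normals exhaust the facet normals of $Z_\omega$ for every support pattern, and the explicit reduction $\sqrt{z_i^{\mathsf T}Q_{\rm Vor}z_i}=D^{-1/6}\sqrt{c_i}$ to \eqref{eq:closed-F}.
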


\begin{proof}
By \eqref{eq:facet-Z-area}, the facet of $Z_\omega$ with normal $z_i$ has
area $q_i|z_i|$.  Under $L$, a planar area with normal $z_i$ is multiplied by
$|\det L|\,|L^{-\mathsf T}z_i|/|z_i|$.  From
\eqref{eq:affine-Q},
\[
 |L^{-\mathsf T}z_i|=|\det L|^{-1/3}
 \sqrt{z_i^{\mathsf T}Qz_i}.
\]
Summing opposite facets gives
\[
 \mathcal H^2(\partial P)=
 2|\det L|^{2/3}\sum_iq_i\sqrt{z_i^{\mathsf T}Qz_i}.
\]
Since $|P|=|\det L|D(\omega)$, division by $|P|^{2/3}$ proves
\eqref{eq:affine-F}.  For a lattice Voronoi cell, $L=A^{-1/2}$ and
$\det A=D$, which gives \eqref{eq:Vor-slice-Q}.
\end{proof}

\subsection{Entropy contraction estimate and affine duality}
\label{sec:affine-duality}
For the global inequality we keep the affine metric arbitrary.  The two
steps needed in the original dual formulation---the Ball--Barthe certificate
and its entropy comparison with $\cJ$---can be combined into one canonical
bridge.  This is the main simplification in the passage from lattices to
arbitrary parallelohedra.

First of all we use the same construction and notations as in Section \ref{sec:compact-entropy}.
and we prove an entropy contraction estimate. 
For a graphical edge
weight $\omega$ let $A(\omega)$ be the matrix associated with the weighted
$K_4$ as above, and let $q=q(\omega)$ be the seven facet field.  Then
\begin{equation}\label{eq:physical-adjugate}
 A_q=\sum_{i=0}^6q_i z_i z_i^{\mathsf T}
 =\operatorname{adj}A(\omega)=D(\omega)A(\omega)^{-1}.
\end{equation}
In particular $\cP(q)=D^2(\omega)$.  This is the same adjugate identity as
\eqref{eq:physical-adjugate-lattice}, now written for an arbitrary positive
edge weighting $\omega$.

For $u=(u_0, u_1, u_2, u_3, u_4,u_5, u_6)$, $u_i>0$, recall from \eqref{eq:Pdef}, \eqref{espressionel_i} 
\[
A_u=\sum_{i=0}^6u_i z_i z_i^{\mathsf T},
 \qquad
 \ell_i(u)=u_i z_i^{\mathsf T}A_u^{-1}z_i
 =u_i\partial_i\log\cP(u).
\]
By homogeneity, $\sum_i\ell_i(u)=3$.

For   $q=q(\omega)$, let $c_i$ be the corresponding cut
weights and introduce the canonical surface field
\begin{equation}\label{eq:affine-canonical-p}
 p_i:=\frac{q_i}{\sqrt{c_i}},\qquad 0\le i\le6.
\end{equation}
Then $A_p=M $ from \eqref{eq:M-J}, so
\begin{equation}\label{eq:J-as-Pp}
 \cP(p)=\det M,\qquad
 \cJ(\omega)=\frac{\cP(p)}{D^{3/2}(\omega)}.
\end{equation}
As in Section \ref{sec:compact-entropy}, we associate to $q=q(\omega)$ and to $p$ defined in \eqref{eq:affine-canonical-p} two determinantal probability measure  $\tau_q$ annd $\tau_p$.  
We need an estimate on the relative entropy of  $\tau_p$ with respect to  $\tau_q$, in terms of their one-point marginals $\ell_i(q)$ and $\ell_i(p)$. 

This estimate is provided by the next general lemma. 
\begin{lemma}\label{lem:EI}
Let \(u,v\in\mathbb{R}^7\) with $u_i, v_i\geq 0$  be such that
\(\cP(u)>0\) and \(\cP(v)>0\), and assume that
\[
\operatorname{supp}\tau_u\subseteq\operatorname{supp}\tau_v.
\]
Then
\begin{equation}\label{eq:EI}
\KL(\tau_u\Vert\tau_v)
\geq
\sum_{i=0}^6
\ell_i(u)\log\frac{\ell_i(u)}{\ell_i(v)}.
\end{equation}
\end{lemma}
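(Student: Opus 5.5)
The plan is to compute the left-hand side of \eqref{eq:EI} exactly from the product structure of $\tau_u$ and $\tau_v$, and reduce the lemma to a matrix inequality with a short convexity proof. Let $S=\{i:u_i>0\}$; for $i\notin S$ we have $\ell_i(u)=0$, so those terms of \eqref{eq:EI} vanish. For $i\in S$ with $\ell_i(u)>0$, the index $i$ lies in some $I\in\operatorname{supp}\tau_u\subseteq\operatorname{supp}\tau_v$, so $v_i>0$ and $\ell_i(v)>0$. Hence every logarithm below is finite. For $I\in\operatorname{supp}\tau_u$, definition \eqref{eq:tau-u} gives
\[
\log\frac{\tau_u(I)}{\tau_v(I)}=\sum_{i\in I}\log\frac{u_i}{v_i}+\log\frac{\cP(v)}{\cP(u)},
\]
since the factor $\kappa_I$ cancels. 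Averaging against $\tau_u$ and using the definition \eqref{eq:marginal-u} of $\ell_i(u)$ yields
\[
\KL(\tau_u\Vert\tau_v)=\sum_i\ell_i(u)\log\frac{u_i}{v_i}+\log\cP(v)-\log\cP(u).
\]

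Next, \eqref{formula2} gives $\ell_i(u)/u_i=z_i^{\mathsf T}A_u^{-1}z_i$, and similarly for $v$. Substituting, \eqref{eq:EI} becomes equivalent to
\[
\log\det A_v+\sum_i\ell_i(u)\log\bigl(z_i^{\mathsf T}A_v^{-1}z_i\bigr)\;\ge\;\log\det A_u+\sum_i\ell_i(u)\log\bigl(z_i^{\mathsf T}A_u^{-1}z_i\bigr).
\]
Thus the claim is that $X=A_u^{-1}$ minimizes $g(X)=-\log\det X+\sum_i\ell_i(u)\log(z_i^{\mathsf T}Xz_i)$ over positive definite $X$. This is a rank-one Ball--Barthe-type statement. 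One can check that $X=A_u^{-1}$ is a critical point, since $\nabla g=-X^{-1}+\sum_i u_iz_iz_i^{\mathsf T}=-A_u+A_u=0$ there, but I will prove global minimality directly.

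To do so, set $y_i=u_i^{1/2}A_u^{-1/2}z_i$. Then $\sum_iy_iy_i^{\mathsf T}=I_3$ and $|y_i|^2=\ell_i(u)$. Let $\hat y_i=y_i/|y_i|$ and $Y=A_u^{1/2}A_v^{-1}A_u^{1/2}>0$. The desired inequality then reads
\[
\sum_i\ell_i(u)\log\bigl(\hat y_i^{\mathsf T}Y\hat y_i\bigr)\ge\log\det Y .
\]
Apply Jensen's inequality to the concave function $\log$ with respect to the spectral probability measure of $Y$ in the unit direction $\hat y_i$. This gives $\log(\hat y_i^{\mathsf T}Y\hat y_i)\ge\hat y_i^{\mathsf T}(\log Y)\hat y_i$. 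Multiplying by $\ell_i(u)=|y_i|^2$ and summing,
\[
\sum_i\ell_i(u)\log\bigl(\hat y_i^{\mathsf T}Y\hat y_i\bigr)\ge\operatorname{tr}\Bigl(\sum_iy_iy_i^{\mathsf T}\log Y\Bigr)=\operatorname{tr}\log Y=\log\det Y,
\]
which finishes the proof. Note that $\sum_i\ell_i(u)=3$ is consistent with the scale invariance of both sides.

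The main obstacle is this last step: identifying the right change of variables so that the matrix inequality becomes a Parseval-frame statement. Once the isotropic frame $y_i$ is in place, the operator-Jensen argument is routine. The remaining point needing care is the support bookkeeping, namely that $u_i>0$ forces $v_i>0$, which is exactly what the hypothesis $\operatorname{supp}\tau_u\subseteq\operatorname{supp}\tau_v$ provides.
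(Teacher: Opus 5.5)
Your proof is correct and matches the paper's argument step for step: the same expansion of $\KL(\tau_u\Vert\tau_v)$ from the product form of $\tau_u,\tau_v$, the same isotropic frame $\sqrt{u_i}\,A_u^{-1/2}z_i$, and the same spectral Jensen step, where your $Y=A_u^{1/2}A_v^{-1}A_u^{1/2}$ is exactly the paper's $B^{-1}$. The support bookkeeping also agrees, since $u_i>0$ is equivalent to $\ell_i(u)>0$ when $A_u$ is positive definite.
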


\begin{proof}
Since \(\cP(u)>0\) and \(\cP(v)>0\), the matrices
\[
A_u=\sum_{i=0}^6 u_i z_i z_i^{\mathsf T},
\qquad
A_v=\sum_{i=0}^6 v_i z_i z_i^{\mathsf T}
\]
are positive definite.

Recall that, by Jacobi's formula,
\[
\ell_i(u)
=
u_i\frac{\partial}{\partial u_i}\log\cP(u)
=
u_i z_i^{\mathsf T}A_u^{-1}z_i,
\]
and similarly
\[
\ell_i(v)
=
v_i z_i^{\mathsf T}A_v^{-1}z_i.
\]

Let
\[
S:=\{i\in\{0,\ldots,6\}:\ell_i(u)>0\}.
\]
For every \(i\in S\), there exists a basis \(I\) containing \(i\) such that
\(\tau_u(I)>0\). Since
\(\operatorname{supp}\tau_u\subseteq\operatorname{supp}\tau_v\),
we also have \(\tau_v(I)>0\). Hence \(u_i>0\), \(v_i>0\), and
\(\ell_i(v)>0\) for every \(i\in S\).

For \(i\in S\), define
\[
w_i(u):=\sqrt{u_i}\,A_u^{-1/2}z_i,
\qquad
e_i(u):=\frac{w_i(u)}{\sqrt{\ell_i(u)}}.
\]
Then
\[
\|w_i(u)\|^2
=
u_i z_i^{\mathsf T}A_u^{-1}z_i
=
\ell_i(u),
\]
so that each \(e_i(u)\) is a unit vector. Moreover,
\begin{align*}
\sum_{i\in S}\ell_i(u)e_i(u)e_i(u)^{\mathsf T}
=
\sum_{i=0}^6 w_i(u)w_i(u)^{\mathsf T} =
A_u^{-1/2}
\left(\sum_{i=0}^6u_i z_i z_i^{\mathsf T}\right)
A_u^{-1/2} =I_3.
\end{align*}

Set
\[
B:=A_u^{-1/2}A_vA_u^{-1/2}.
\]
Then \(B\) is positive definite and
\[
\det B
=
\frac{\det A_v}{\det A_u}
=
\frac{\cP(v)}{\cP(u)}.
\]

By the definitions of \(\tau_u\) and \(\tau_v\),
\begin{align*}
\KL(\tau_u\Vert\tau_v)
&=
\sum_{I\in\operatorname{supp}\tau_u}
\tau_u(I)\log\frac{\tau_u(I)}{\tau_v(I)} =
\sum_{I\in\operatorname{supp}\tau_u}
\tau_u(I)
\left(
\log\frac{\cP(v)}{\cP(u)}
+
\sum_{i\in I}\log\frac{u_i}{v_i}
\right)\\
&=
\log\frac{\cP(v)}{\cP(u)}
+
\sum_{i\in S}\ell_i(u)\log\frac{u_i}{v_i} =
\log\det B
+
\sum_{i\in S}\ell_i(u)\log\frac{u_i}{v_i}.
\end{align*}

Since
\[
A_v^{-1}
=
A_u^{-1/2}B^{-1}A_u^{-1/2},
\]
for every \(i\in S\) we have
\begin{align*}
\ell_i(v)
=
v_i z_i^{\mathsf T}A_v^{-1}z_i=
v_i z_i^{\mathsf T}
A_u^{-1/2}B^{-1}A_u^{-1/2}z_i=
\frac{v_i}{u_i}\,
\ell_i(u)\,
e_i(u)^{\mathsf T}B^{-1}e_i(u).
\end{align*}
Therefore
\[
\frac{u_i\ell_i(v)}
     {v_i\ell_i(u)}
=
e_i(u)^{\mathsf T}B^{-1}e_i(u).
\]

It follows that
\begin{align*}
&\KL(\tau_u\Vert\tau_v)
-
\sum_{i=0}^6
\ell_i(u)\log\frac{\ell_i(u)}{\ell_i(v)}
\\
&\qquad=
\log\det B
+
\sum_{i\in S}\ell_i(u)
\log\left(
e_i(u)^{\mathsf T}B^{-1}e_i(u)
\right).
\end{align*}

By the spectral Jensen inequality, for every unit vector \(e\),
\[
\log(e^{\mathsf T}B^{-1}e)
\geq
e^{\mathsf T}(\log B^{-1})e.
\]
Thus
\begin{align*}
&\sum_{i\in S}\ell_i(u)
\log\left(e_i(u)^{\mathsf T}B^{-1}e_i(u)\right)
\geq
\sum_{i\in S}\ell_i(u)
e_i(u)^{\mathsf T}(\log B^{-1})e_i(u)\\
&=
\operatorname{tr}\left(
(\log B^{-1})
\sum_{i\in S}
\ell_i(u)e_i(u)e_i(u)^{\mathsf T}
\right) =
\operatorname{tr}(\log B^{-1}) =
\log\det B^{-1} =
-\log\det B.
\end{align*}
Consequently,
\[
\log\det B
+
\sum_{i\in S}\ell_i(u)
\log\left(e_i(u)^{\mathsf T}B^{-1}e_i(u)\right)
\geq 0,
\]
which proves \eqref{eq:EI}.
\end{proof}

Lemma \ref{lem:EI} is the key step to prove the following result.

\begin{theorem} 
\label{thm:canonical-affine-bridge}
For every positive graphical weight $\omega$, every $Q>0$ with $\det Q=1$,
and $q=q(\omega)$, one has
\begin{equation}\label{eq:canonical-affine-bridge}
 \frac{N_Q(q)^3}{27\cP(q)}\ge\cJ(\omega),
 \qquad
 N_Q(q):=\sum_{i=0}^6q_i\sqrt{z_i^{\mathsf T}Qz_i}.
\end{equation}
If equality holds, then, with $p$ as in
\eqref{eq:affine-canonical-p} and
$\widehat\ell_i=\ell_i(p)$, the seven quantities
\begin{equation}\label{eq:canonical-affine-equality}
 \frac{q_i\sqrt{z_i^{\mathsf T}Qz_i}}{\widehat\ell_i}
 \qquad(0\le i\le6)
\end{equation}
are equal.
\end{theorem}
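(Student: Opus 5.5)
The inequality \eqref{eq:canonical-affine-bridge} is a statement about logarithms. I plan to linearize it with a weighted AM--GM whose weights are the leverage scores $\widehat\ell_i=\ell_i(p)$ of the canonical field $p$. The resulting entropy terms should then be controlled by Lemma~\ref{lem:EI} and by the spectral Jensen inequality already used in its proof.

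\emph{Step 1 (reduction).} Using $\cP(q)=D^2(\omega)$ and \eqref{eq:J-as-Pp}, the claim is equivalent to
\[
N_Q(q)^3\ge 27\,D(\omega)^{1/2}\,\cP(p).
\]
Put $y_i=q_i\sqrt{z_i^{\mathsf T}Qz_i}$. Since $\sum_i\widehat\ell_i/3=1$, weighted AM--GM gives
\[
N_Q=\sum_i \tfrac{\widehat\ell_i}{3}\cdot\tfrac{3y_i}{\widehat\ell_i}\ge\prod_i\Bigl(\tfrac{3y_i}{\widehat\ell_i}\Bigr)^{\widehat\ell_i/3}.
\]
Equality holds exactly when all $y_i/\widehat\ell_i$ coincide, which is the equality statement \eqref{eq:canonical-affine-equality}. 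It therefore suffices to prove the entropy inequality
\[
\sum_i\widehat\ell_i\log\frac{y_i}{\widehat\ell_i}\ge \tfrac12\log D+\log\cP(p). \tag{$\ast$}
\]

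\emph{Step 2 (entropy comparison).} Apply Lemma~\ref{lem:EI} with $u=p$ and $v=q$. The supports agree because $p_i>0$ exactly when $q_i>0$. Write the relative entropy explicitly as
\[
\KL(\tau_p\Vert\tau_q)=\log\tfrac{D^2}{\cP(p)}-\tfrac12\sum_i\widehat\ell_i\log c_i,
\]
and use $\ell_i(q)=q_ic_i/D$ from Lemma~\ref{lem:Pq}. Substituting into \eqref{eq:EI} and collecting terms, $(\ast)$ reduces to a purely metric inequality comparing $Q$ with $A(\omega)$:
\[
\sum_i\widehat\ell_i\log\frac{z_i^{\mathsf T}Qz_i}{z_i^{\mathsf T}A(\omega)z_i}\ge-\log\det A(\omega),\qquad \det Q=1. \tag{$\ast\ast$}
\]

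\emph{Step 3 (the metric term).} Use the frame of the proof of Lemma~\ref{lem:EI} for $u=p$. Set $e_i=M^{-1/2}z_i/|M^{-1/2}z_i|$, so that $\sum_i\widehat\ell_ie_ie_i^{\mathsf T}=I_3$ and $\widehat\ell_i=p_i|M^{-1/2}z_i|^2$. The $Q$-terms are handled by spectral Jensen:
\[
\log(z_i^{\mathsf T}Qz_i)\ge\log|M^{-1/2}z_i|^2+e_i^{\mathsf T}\log(M^{1/2}QM^{1/2})e_i .
\]
Summing against the frame and using $\det Q=1$ gives
\[
\sum_i\widehat\ell_i\log(z_i^{\mathsf T}Qz_i)\ge\sum_i\widehat\ell_i\log|M^{-1/2}z_i|^2+\log\det M.
\]
This step is Ball--Barthe in rank one, and its equality case is that $M^{1/2}QM^{1/2}$ is scalar.

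\emph{Main obstacle.} The $A$-terms in $(\ast\ast)$ carry the wrong sign for Jensen, since $-\log$ is convex. For them I would exploit the special relation between $M$ and $A$. By definition $p_i=q_i/\sqrt{c_i}$ with $c_i=z_i^{\mathsf T}A z_i$, and $A_q=D A^{-1}$. Hence $c_i$ can be rewritten as $D\ell_i(q)/q_i$, and the needed upper bound on $\sum_i\widehat\ell_i\log c_i$ should come out of the same KL inequality, now applied in the reverse direction $u=q$, $v=p$. If the two Lemma~\ref{lem:EI} estimates, the Jensen bound above and the $p,q$ relations combine to give $(\ast\ast)$, the proof is complete. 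A naive AM--GM on the $A$-terms instead lands on $\tr(AM)\ge 3(\det A\det M)^{1/3}$, which points the wrong way. I therefore expect this balancing of the $c_i$-terms to be the delicate point, and I would test the chain first on the Voronoi slice $Q=D^{-1/3}A$, where equality must hold.
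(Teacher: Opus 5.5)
\textbf{Verdict: genuine gap.} The reductions in your Steps 1 and 2 are algebraically correct, and so is the Ball--Barthe bound in Step 3. However, the intermediate target $(\ast\ast)$ is false in general, so the ``main obstacle'' cannot be overcome by any further estimate.

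\textbf{Why $(\ast\ast)$ fails.} $(\ast\ast)$ holds with equality at $Q_{\rm Vor}=D^{-1/3}A(\omega)$. So it asserts that $Q_{\rm Vor}$ minimizes $Q\mapsto\sum_i\widehat\ell_i\log(z_i^{\mathsf T}Qz_i)$ on $\{\det Q=1\}$. This is also why the test on the Voronoi slice that you propose would not reveal anything. Your own Step 3 bound is attained at $Q^\ast=(\det M)^{1/3}M^{-1}$. Since $|M^{-1/2}z_i|^2=\widehat\ell_i/p_i=\widehat\ell_i\sqrt{c_i}/q_i$, evaluating $(\ast\ast)$ at $Q^\ast$ gives exactly
\[
\sum_i\widehat\ell_i\log\frac{q_i\sqrt{c_i}}{\widehat\ell_i}\le\log D+\log\cP(p).
\]
This is the reverse of what Lemma~\ref{lem:EI} (with $u=p$, $v=q$) yields after your Step 2 substitutions. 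So $(\ast\ast)$ can hold only when Lemma~\ref{lem:EI} is an equality. By its proof (all $\widehat\ell_i>0$ and $z_0=z_1+z_2+z_3$), that forces $M\propto\adj A(\omega)$, i.e.\ equality in \eqref{eq:spectral}. This already fails at $\omega=(2,1,1,1,1,2)$: there $M_{23}/(\adj A)_{23}=1/2$ but $M_{11}/(\adj A)_{11}=(5+2/\sqrt6)/12$. In particular, applying Lemma~\ref{lem:EI} in the reverse direction cannot rescue the argument. By using the entropy inequality first, you have spent exactly the slack that the metric step needs.

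\textbf{The fix: reorder the same ingredients.} Insert the Step 3 bound directly into $(\ast)$ before invoking Lemma~\ref{lem:EI}. Half of that bound contributes $\tfrac12\sum_i\widehat\ell_i\log(\widehat\ell_i\sqrt{c_i}/q_i)+\tfrac12\log\cP(p)$. This cancels half of the entropy term $\sum_i\widehat\ell_i\log(q_i/\widehat\ell_i)$ in $(\ast)$. What remains is precisely the displayed inequality above with $\ge$ in place of $\le$, that is, Lemma~\ref{lem:EI} for $u=p$, $v=q$. No comparison between $Q$ and $A(\omega)$ is ever needed.

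This is the paper's proof: weighted AM--GM, then Ball--Barthe for $A_p^{1/2}QA_p^{1/2}$. It then uses the observation that twice the logarithm of the ratio between the resulting lower bound and $\cJ(\omega)$ equals $\KL(\tau_p\Vert\tau_q)-\sum_i\widehat\ell_i\log(\widehat\ell_i/\ell_i)$. Your treatment of the equality case through the AM--GM step is fine as it stands.
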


\begin{proof}
Write $P_p=\cP(p)$, $P_q=\cP(q)=D^2(\omega)$ and set
\[
 \ell_i=\ell_i(q),\qquad \widehat\ell_i=\ell_i(p),
 \qquad
 \rho_i=z_i^{\mathsf T}A_p^{-1}z_i
       =\frac{\widehat\ell_i}{p_i}.
\]
The unit vectors
\[
 e_i=\frac{A_p^{-1/2}z_i}{\sqrt{\rho_i}}
\]
form the isotropic frame
$\sum_i\widehat\ell_i e_i e_i^{\mathsf T}=I_3$, where $I_3$ is the identity matrix.  Applying the
rank-one Ball--Barthe inequality to
$B=A_p^{1/2}QA_p^{1/2}$, whose determinant is $P_p$, gives
\begin{equation}\label{eq:canonical-affine-BB}
 \prod_i(z_i^{\mathsf T}Qz_i)^{\widehat\ell_i/2}
 \ge P_p^{1/2}\prod_i
 \left(\frac{\widehat\ell_i}{p_i}\right)^{\widehat\ell_i/2}.
\end{equation}
Weighted AM--GM, with weights $\widehat\ell_i/3$, followed by
\eqref{eq:canonical-affine-BB}, yields
\begin{equation}\label{eq:canonical-affine-first-step}
 \frac{N_Q(q)^3}{27P_q}
 \ge \mathcal R:=
 \frac{P_p^{1/2}}{P_q}
 \prod_i\left(\frac{q_i^2}{p_i\widehat\ell_i}
          \right)^{\widehat\ell_i/2}.
\end{equation}

It remains to compare $\mathcal R$ with $\cJ(\omega)=P_p/D^{3/2}$.
From the definition of $\mathcal R$, using $P_q=D^2(\omega)$, one obtains
\[
 2\log\frac{\mathcal R}{\cJ}
 =-\log P_p-\frac12\log P_q
  +\sum_i\widehat\ell_i
   \log\frac{q_i^2}{p_i\widehat\ell_i}.
\]
On the other hand,
\[
 \KL(\tau_p\Vert\tau_q)
 -\sum_i\widehat\ell_i\log\frac{\widehat\ell_i}{\ell_i}
 =\log\frac{P_q}{P_p}
  +\sum_i\widehat\ell_i
   \log\frac{p_i\ell_i}{q_i\widehat\ell_i}.
\]
Since $p_i/q_i=c_i^{-1/2}$, $c_i=D\ell_i/q_i$, and
$\sum_i\widehat\ell_i=3$, the two displayed expressions agree.  Therefore
\begin{equation}\label{eq:canonical-affine-entropy}
 2\log\frac{\mathcal R}{\cJ}
 =\KL(\tau_p\Vert\tau_q)
 -\sum_i\widehat\ell_i
   \log\frac{\widehat\ell_i}{\ell_i}.
\end{equation}
Lemma~\ref{lem:EI}, applied with $u=p$ and $v=q$, shows that the
right-hand side is nonnegative.  Combining
\eqref{eq:canonical-affine-first-step} and
\eqref{eq:canonical-affine-entropy} proves
\eqref{eq:canonical-affine-bridge}.  Equality in the final bound forces
equality in the weighted AM--GM step, which is precisely
\eqref{eq:canonical-affine-equality}.
\end{proof}

\begin{theorem} 
\label{thm:affine-positive-global}
Let $\omega\in\mathbb R^6$ with $\omega_i>0$ and let $Q>0$, $\det Q=1$.  Then
\begin{equation}\label{eq:affine-positive-global}
 \Iso(\omega,Q)\ge F_{\BCC}.
\end{equation}
Equality holds only for the regular truncated octahedron, up to similarity.
\end{theorem}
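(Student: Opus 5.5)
The plan is to chain the affine bridge of Theorem~\ref{thm:canonical-affine-bridge} with the lattice determinant inequality of Theorem~\ref{thm:J-global}. By Proposition~\ref{prop:affine-surface},
\[
 \Iso(\omega,Q)=\frac{2N_Q(q)}{D(\omega)^{2/3}},
\]
and since $\cP(q)=D(\omega)^2$ we have $\Iso(\omega,Q)^3=8N_Q(q)^3/\cP(q)$. Theorem~\ref{thm:canonical-affine-bridge} then gives
\[
 \Iso(\omega,Q)^3\ge 8\cdot 27\,\cJ(\omega)=216\,\cJ(\omega).
\]
Here $\cJ(\omega)=\det M(\omega)/D(\omega)^{3/2}$ is the same algebraic function of the six positive weights as in the lattice case. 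Since every positive $\omega$ is a positive Selling vector, Theorem~\ref{thm:J-global} applies and gives $\cJ(\omega)\ge\cJ_{\BCC}$. Therefore
\[
 \Iso(\omega,Q)\ge 6\cJ_{\BCC}^{1/3}=F_{\BCC}
\]
by \eqref{eq:constants}.

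For the equality case, equality forces equality in Theorem~\ref{thm:J-global}, so $\omega$ lies on the BCC ray, say $\omega=(1,\dots,1)$ after scaling. Equality in the bridge then forces the equal-ratio condition \eqref{eq:canonical-affine-equality}. At BCC all data are explicit: $q_i=3$ and $c_i=3$ for $i\le3$, while $q_i=1$ and $c_i=4$ for $i=4,5,6$. The leverage scores $\widehat\ell_i=\ell_i(p)$ can be computed directly from $A_p=M(1)$, which is a multiple of $A(1)^{-1}$ (from the computation showing $A^{1/2}MA^{1/2}$ is scalar). Since $A_p\propto A_q$, we get $\widehat\ell_i=\ell_i(q)$, so $\widehat\ell_i=9/16$ for $i\le 3$ and $\widehat\ell_i=1/4$ for $i=4,5,6$. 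The equality condition then reads
\[
 z_i^{\mathsf T}Qz_i=\kappa\,c_i \qquad\text{for all seven } i,
\]
for a constant $\kappa$, i.e.\ $z_i^{\mathsf T}(Q-\kappa A(1))z_i=0$ for all $i$. The six vectors $z_1,z_2,z_3,z_4,z_5,z_6$ already determine a symmetric $3\times 3$ matrix through these quadratic values, because $z_i z_i^{\mathsf T}$ for these six vectors span the space of symmetric matrices. Hence $Q=\kappa A(1)$, and $\det Q=1$ fixes $\kappa$, giving $Q=Q_{\rm Vor}(\omega)$. So $P=LZ_\omega$ is a similarity image of the BCC Voronoi cell, the regular truncated octahedron. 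Conversely, at BCC with $Q=Q_{\rm Vor}$ the value is $F_{\BCC}$ by Proposition~\ref{prop:affine-surface}.

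The main obstacle is making the equality analysis rigorous. One must confirm that equality in the composite inequality indeed forces equality in each step separately. This holds because each step is a $\ge$ inequality between positive quantities, so equality of the ends forces every intermediate inequality to be an equality. One must also verify the spanning claim for $\{z_iz_i^{\mathsf T}\}$, a routine $6\times6$ determinant check.
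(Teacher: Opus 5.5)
Your inequality chain is the paper's own (Proposition~\ref{prop:affine-surface}, then Theorem~\ref{thm:canonical-affine-bridge}, then Theorem~\ref{thm:J-global}). Your equality route is also the paper's: BCC ray, then the equal-ratio condition \eqref{eq:canonical-affine-equality}, then recovery of $Q$ from the values $z_i^{\mathsf T}Qz_i$. Your spanning argument for $\{z_iz_i^{\mathsf T}\}_{i=1}^6$ is just a packaged form of the paper's entrywise recovery of $Q$.

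The problem is the leverage-score step in the equality case. From $A_p\propto A_q$ you infer $\widehat\ell_i=\ell_i(q)$. This does not follow, because $\ell_i(u)=u_i\,z_i^{\mathsf T}A_u^{-1}z_i$ depends on $u_i$ itself. At BCC the ratio $p_i/q_i=c_i^{-1/2}$ is not constant ($3^{-1/2}$ versus $1/2$). The true values are $\widehat\ell_i=\frac9{11}-\frac{3\sqrt3}{22}$ for $i\le3$ and $\widehat\ell_i=\frac{2\sqrt3-1}{11}$ for $i\ge4$, not $9/16$ and $1/4$.

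Moreover, the condition you then write does not follow from the values you claimed. With $\widehat\ell_i=q_ic_i/D$, equality of the ratios $q_i\sqrt{z_i^{\mathsf T}Qz_i}/\widehat\ell_i$ would force $z_i^{\mathsf T}Qz_i\propto c_i^2$. That system has no solution: since $z_0z_0^{\mathsf T}=\sum_{i\ge4}z_iz_i^{\mathsf T}-\sum_{i=1}^3z_iz_i^{\mathsf T}$, it would require $c_0^2=9$ to equal $3\cdot16-3\cdot9=21$. So your stated values would wrongly rule out equality even at BCC. Two errors cancel to give the correct displayed condition.

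The repair is one line. At BCC, $A_p=M(1)=\mu A(1)^{-1}$ with $\mu=2+4\sqrt3$. Hence $\widehat\ell_i=p_i\,z_i^{\mathsf T}A_p^{-1}z_i=p_ic_i/\mu=q_i\sqrt{c_i}/\mu$, and therefore $q_i\sqrt{z_i^{\mathsf T}Qz_i}/\widehat\ell_i=\mu\sqrt{z_i^{\mathsf T}Qz_i/c_i}$. Equality of these seven ratios is exactly $z_i^{\mathsf T}Qz_i=\kappa c_i$, equivalently the paper's relation $w_4=w_5=w_6=2w_0/\sqrt3$. With this replacement the rest of your argument is correct: $Q=\kappa A(1)$ by spanning, $\det Q=1$ gives $Q=Q_{\rm Vor}$, and hence $P$ is similar to the regular truncated octahedron.
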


\begin{proof}
Let $q=q(\omega)$ and
$N_Q=\sum_iq_i\sqrt{z_i^{\mathsf T}Qz_i}$.  Since
$\cP(q)=D(\omega)^2$, Proposition~\ref{prop:affine-surface} gives
\[
 \left(\frac{\Iso(\omega,Q)}6\right)^3
 =\frac{N_Q^3}{27\cP(q)}.
\]
Apply the canonical affine bridge
Theorem~\ref{thm:canonical-affine-bridge}, and then the sharp lattice
determinant inequality Theorem~\ref{thm:J-global}:
\begin{equation}\label{eq:full-affine-chain}
 \left(\frac{\Iso(\omega,Q)}6\right)^3
 \ge\cJ(\omega) 
 \ge \cJ_{\BCC}.
\end{equation}
Since $F_{\BCC}=6\cJ_{\BCC}^{1/3}$, this proves
\eqref{eq:affine-positive-global}.

If equality holds in the final conclusion, then $\cJ=\cJ_{\BCC}$.
The equality statement of Theorem~\ref{thm:J-global} forces $\omega$ onto
the BCC ray.  Normalize
$q=(3,3,3,3;1,1,1)$.  Then
$c=(3,3,3,3;4,4,4)$ and hence
\[
 p=(\sqrt3,\sqrt3,\sqrt3,\sqrt3;1/2,1/2,1/2)
   =\frac1{\sqrt3}(3,3,3,3;\sqrt3/2,\sqrt3/2,\sqrt3/2).
\]
Leverage scores are invariant under a common scaling of the field, so direct
substitution gives two leverage values
\[
 \ell_S=\frac9{11}-\frac{3\sqrt3}{22}
 \quad(i<4),
 \qquad
 \ell_D=-\frac1{11}+\frac{2\sqrt3}{11}
 \quad(i\ge4),
\]
with
\[
 \frac{3\ell_D}{\ell_S}=\frac2{\sqrt3}.
\]
The equality condition \eqref{eq:canonical-affine-equality} therefore forces
the seven metric
lengths $w_i=\sqrt{z_i^{\mathsf T}Qz_i}$ to satisfy
\[
 w_0=w_1=w_2=w_3=:S,
 \qquad
 w_4=w_5=w_6=\frac{2S}{\sqrt3}.
\]
Since $w_1^2,w_2^2,w_3^2$ are the diagonal entries of $Q$, they are all
$S^2$.  The three double-cut equalities give all off-diagonal entries equal
to $-S^2/3$; equivalently
\[
 Q=\frac{S^2}{3}
 \begin{pmatrix}3&-1&-1\\-1&3&-1\\-1&-1&3\end{pmatrix}.
\]
The condition $\det Q=1$ fixes the scalar.  This is exactly the regular BCC
metric, so the resulting parallelohedron is similar to the regular truncated
octahedron.  The converse is immediate.
\end{proof}

\subsection{The truncated octahedron theorem}
\label{sec:affine-boundary}

The strata with at most four generators are already controlled in the full
zonotopal class by Jo\'os--L\'angi \cite{JoosLangi2023}.  The only new boundary
case is the five-generator elongated rhombic dodecahedron.  The following
result extends Theorem~\ref{thm:fivefacet} from the Voronoi slice
\[
 Q=Q_{\rm Vor}(\omega)=D(\omega)^{-1/3}A(\omega)
\]
to an arbitrary determinant-one affine metric $Q$.  The same four-point
determinant inequality from Section~\ref{sec:FCC} remains the key input, but
the Voronoi metric relation is no longer imposed.

\begin{theorem} 
\label{thm:five-generator-affine}
Every three-dimensional parallelohedron generated by five segments satisfies
\begin{equation}\label{eq:five-affine-bound}
 \Iso(P)>3\,2^{5/6}.
\end{equation}
On the closure of the five-generator stratum,
\begin{equation}\label{eq:five-affine-closure}
 \Iso(P)\ge3\,2^{5/6},
\end{equation}
with equality exactly at the regular rhombic dodecahedron, up to similarity.
\end{theorem}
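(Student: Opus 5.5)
The plan is to repeat the proof of Theorem~\ref{thm:fivefacet}, with the Voronoi metric $A(\omega)$ replaced by the arbitrary determinant-one metric $Q$. The key observation is that Theorem~\ref{thm:fourpoint} applies to any semimetric, so it does not need the Voronoi relation between metric and weights.

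By tetrahedral symmetry and Theorem~\ref{thm:affine-param}, we may take $\omega=(0,b,c,d,e,t)$, with $t>0$ on the open five-generator stratum. By Proposition~\ref{prop:affine-surface},
\[
\Iso(P)=2D(\omega)^{-2/3}N_Q,\qquad N_Q=\sum_{i\ne4}q_i w_i,\qquad w_i=\sqrt{z_i^{\mathsf T}Qz_i}.
\]

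\emph{Step 1: the lengths $w_i$ form a Euclidean four-point metric.} Use the same labelling as in \eqref{metric}:
\[
d_{01}=w_0,\quad d_{23}=w_1,\quad d_{02}=w_2,\quad d_{13}=w_3,\quad d_{12}=w_5,\quad d_{03}=w_6.
\]
Take the points
\[
x_0=0,\qquad x_1=z_0=(1,1,1)^{\mathsf T},\qquad x_2=z_2,\qquad x_3=z_6.
\]
Then $x_1-x_2=z_5$, $x_1-x_3=z_3$ and $x_2-x_3=-z_1$. Hence $d_{ij}$ is the $Q$-distance between $x_i$ and $x_j$, so $d\in\mathrm{MET}_4$. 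Moreover $\Gamma(d^{\circ2})$ is the $Q$-Gram matrix of $x_1,x_2,x_3$, which gives
\[
\det\Gamma(d^{\circ2})=\det Q\,\det(z_0,z_2,z_6)^2=1.
\]

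\emph{Step 2: trace identity and AM--GM.} As in Section~\ref{sec:FCC}, let $\mathsf G=\Gamma(c^{\circ})$ be the matrix built from the cut weights $c_i(\omega)$. The identities $\det\mathsf G=D(\omega)$ and $\partial\det\mathsf G/\partial c_i=q_i$ are polynomial identities in the $c_i$. Since $\Gamma$ is linear, Jacobi's formula gives
\[
N_Q=\tr\bigl(\adj\mathsf G\,\Gamma(d)\bigr)
\]
for the new metric $d$, exactly as in \eqref{eq:facet-structure}. The matrix $\Gamma(d)$ is positive semidefinite, because it is a nonnegative combination of the rank-one matrices $r_Sr_S^{\mathsf T}$ from the cut decomposition. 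Applying AM--GM to the eigenvalues of $\mathsf G^{-1/2}\Gamma(d)\mathsf G^{-1/2}$ gives
\[
N_Q\ge3D^{2/3}(\det\Gamma(d))^{1/3}.
\]
Therefore $\Iso(P)\ge6(\det\Gamma(d))^{1/3}$. Theorem~\ref{thm:fourpoint} together with Step~1 gives $(\det\Gamma(d))^2\ge\tfrac12$. Hence
\[
\Iso(P)\ge6\cdot2^{-1/6}=3\,2^{5/6}.
\]

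\emph{Step 3: equality and strictness.} This is the step I expect to need the most care. Equality requires equality in Theorem~\ref{thm:fourpoint}, which applies because $\det\Gamma(d)>0$ follows from $\det\Gamma(d^{\circ2})=1$. So all $w_i$ are equal and $\Gamma(d)=m\Gamma(\mathbf e)$. Equality in AM--GM then forces $\mathsf G$ to be proportional to $\Gamma(d)$. Comparing entries, all six $c_i$ with $i\ne4$ must coincide. As in the proof of Theorem~\ref{thm:fivefacet}, this gives $b=c=d=e$ and $t=0$. So equality is impossible when $t>0$, which proves \eqref{eq:five-affine-bound}.

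On the closure, the points with $t=0$, and more generally those with at most four active generators, are covered by Proposition~\ref{prop:four-generators} in its full zonotopal form due to Jo\'os and L\'angi. The four-generator equality case gives the regular rhombic dodecahedron. On the $4$-cycle $\omega=(0,m,m,m,m,0)$, equal $w_i$ force $Q$ to be the metric of the regular rhombic dodecahedron, so equality holds only up to similarity.
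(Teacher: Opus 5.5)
Your proposal is correct and follows the paper's proof essentially step for step. It uses the same points $0,z_0,z_2,z_6$ to realize the $Q$-lengths as a four-point semimetric with $\det\Gamma(d^{\circ2})=1$, then the Jacobi trace identity $N_Q=\tr(\adj\mathsf G\,\Gamma(d))$, the spectral AM--GM, Theorem~\ref{thm:fourpoint}, and the same equality analysis via $\Gamma(d)=\mu\mathsf G$. The only minor differences are that you verify positive semidefiniteness of $\Gamma(d)$ explicitly and invoke Jo\'os--L\'angi on the closure, whereas the paper's equality argument never uses $t>0$ and so covers the closure directly.
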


\begin{proof}
By tetrahedral symmetry set $a=0$ and $f=t$.  Relabel the six active facet
polynomials and cut weights of Section~\ref{sec:FCC} as follows:
\begin{align*}
&C_1=de+t(d+e), &&L_1=b+c,\\
&C_2=bc+t(b+c), &&L_2=d+e,\\
&C_3=ce, &&L_3=b+d+t,\\
&C_4=bd, &&L_4=c+e+t,\\
&C_5=be, &&L_5=c+d+t,\\
&C_6=cd, &&L_6=b+e+t.
\end{align*}
Thus
\[
 (C_1,\ldots,C_6)=(q_0,q_1,q_2,q_3,q_5,q_6),
 \qquad
 (L_1,\ldots,L_6)=(c_0,c_1,c_2,c_3,c_5,c_6).
\]
For a six-tuple $x=(x_1,\ldots,x_6)$ in this subsection, write
\[
 \Gamma_\ast(x_1,\ldots,x_6):=\Gamma(d),
 \qquad
 (d_{01},d_{23},d_{02},d_{13},d_{12},d_{03})
 =(x_1,x_2,x_3,x_4,x_5,x_6).
\]
This convention records explicitly the nonstandard edge order used below.
Let
\[
 \mathsf G_\omega=\Gamma_\ast(L_1,\ldots,L_6).
\]
As in \eqref{eq:facet-structure},
\begin{equation}\label{eq:affine-five-G}
 \det\mathsf G_\omega=D,
 \qquad
 \partial_{L_i}\det\mathsf G_\omega=C_i.
\end{equation}

Now let $Q>0$, $\det Q=1$ be an arbitrary affine metric.  For the six active
cut directions $z_0,z_1,z_2,z_3,z_5,z_6$, set
\[
 (R_1,\ldots,R_6)=
 (z_0^{\mathsf T}Qz_0,z_1^{\mathsf T}Qz_1,
 z_2^{\mathsf T}Qz_2,z_3^{\mathsf T}Qz_3,
 z_5^{\mathsf T}Qz_5,z_6^{\mathsf T}Qz_6)
\]
and use the edge order
\[
 d_{01}^2=R_1,\quad d_{23}^2=R_2,\quad
 d_{02}^2=R_3,\quad d_{13}^2=R_4,\quad
 d_{12}^2=R_5,\quad d_{03}^2=R_6.
\]
These are squared Euclidean distances: if
$p_1=z_0$, $p_2=z_2$, $p_3=z_6$ and $p_0=0$, then the remaining differences
are $p_1-p_2=z_5$, $p_1-p_3=z_3$, and $p_2-p_3=-z_1$.  Hence
\begin{equation}\label{eq:Gamma-R-congruence}
 \Gamma_\ast(R_1,\ldots,R_6)=U^{\mathsf T}QU,
 \qquad U=(z_0\ z_2\ z_6),
 \qquad |\det U|=1,
\end{equation}
so
\begin{equation}\label{eq:Gamma-R-det1}
 \det\Gamma_\ast(R_1,\ldots,R_6)=1.
\end{equation}
Put
\[
 \mathsf H_Q=\Gamma_\ast(\sqrt{R_1},\ldots,\sqrt{R_6}).
\]
By linearity of $\Gamma$ and Jacobi's formula,
\begin{equation}\label{eq:affine-five-N}
 N_Q:=\sum_{i=1}^6C_i\sqrt{R_i}
 =\operatorname{tr}(\operatorname{adj}\mathsf G_\omega\,\mathsf H_Q).
\end{equation}
If $\mu_1,\mu_2,\mu_3$ are the eigenvalues of
$\mathsf G_\omega^{-1/2}\mathsf H_Q\mathsf G_\omega^{-1/2}$, then
\[
 N_Q=D(\mu_1+\mu_2+\mu_3),
 \qquad
 \det\mathsf H_Q=D\mu_1\mu_2\mu_3.
\]
Therefore
\begin{equation}\label{eq:affine-five-spectral}
 N_Q^3\ge27D^2\det\mathsf H_Q.
\end{equation}
Theorem~\ref{thm:fourpoint}, applied to the metric $d$, and
\eqref{eq:Gamma-R-det1} give
\[
 2(\det\mathsf H_Q)^2\ge1,
 \qquad
 \det\mathsf H_Q\ge2^{-1/2}.
\]
Since the affine quotient is $2N_Q/D^{2/3}$, we conclude
\[
 \Iso(P)\ge6\,2^{-1/6}=3\,2^{5/6}.
\]

If equality holds, Theorem~\ref{thm:fourpoint} forces all six $d_{ij}$ to be
equal, equivalently all six $R_i=d_{ij}^2$ are equal.  Equality in
\eqref{eq:affine-five-spectral} forces
$\mathsf H_Q=\lambda\mathsf G_\omega$ for some $\lambda>0$.  Since
\[
 \mathsf H_Q=\Gamma_\ast(\sqrt{R_1},\ldots,\sqrt{R_6}),
 \qquad
 \mathsf G_\omega=\Gamma_\ast(L_1,\ldots,L_6),
\]
and $\Gamma$ is injective on the six edge variables, it follows that
\[
 \sqrt{R_i}=\lambda L_i\qquad(1\le i\le6).
\]
As the $R_i$ are all equal, all six $L_i$ are equal.  The relations
$L_3=L_5$, $L_3=L_6$, $L_1=L_2$, and $L_1=L_3$ yield
$b=c$, $d=e$, $b=d$, and finally $t=0$.  Thus the graphical weights lie on
the four-generator FCC ray $(0,m,m,m,m,0)$.  Together with the equality of
all six distances $d_{ij}$, this forces the determinant-one metric to be the
tetrahedrally symmetric FCC metric.  Hence the limiting polytope is the
regular rhombic dodecahedron, up to similarity.  Equality is not attained in
the open five-generator stratum.
\end{proof}

\begin{corollary} 
\label{cor:all-boundary-affine}
Every three-dimensional parallelohedron which is not of truncated-octahedral
type satisfies
\begin{equation}\label{eq:all-boundary-affine}
 \Iso(P)\ge3\,2^{5/6}>F_{\BCC},
\end{equation}
with equality in the first inequality only for the regular rhombic
dodecahedron.
\end{corollary}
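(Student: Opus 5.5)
By Theorem~\ref{thm:affine-param}, every parallelohedron is, up to translation, of the form $P=LZ_\omega$ with $\omega\ge0$ and $D(\omega)>0$. Its Fedorov type is determined by the number of active graphical generators, namely $6,5,4,4,3$. Not being of truncated-octahedral type therefore means that at most five of the $\omega_\xi$ are positive. I will split into two cases according to whether exactly five, or at most four, generators are active.

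\emph{At most four active generators.} Here $P$ is a three-dimensional zonotope generated by three or four segments. The result of Jo\'os and L\'angi \cite[Theorem~3, $d=3$, $k=2$]{JoosLangi2023} applies to the full zonotopal class with no Voronoi relation imposed, exactly as quoted in the proof of Proposition~\ref{prop:four-generators}. It gives $\Iso(P)\ge\min\{F_{\mathrm{SC}},F_\FCC\}=3\,2^{5/6}$. Equality holds only when $P$ is a cube (excluded, since $6>3\,2^{5/6}$) or a regular rhombic dodecahedron. So in this case equality occurs only for the regular rhombic dodecahedron.

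\emph{Exactly five active generators.} This case is covered by Theorem~\ref{thm:five-generator-affine}, which gives the strict inequality $\Iso(P)>3\,2^{5/6}$. In particular equality never occurs in the open five-generator stratum.

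Combining the two cases gives $\Iso(P)\ge 3\,2^{5/6}$ for every non-truncated-octahedral parallelohedron, with equality only for the regular rhombic dodecahedron. The remaining strict comparison $3\,2^{5/6}>F_\BCC$ was already verified in the proof of Theorem~\ref{thm:BCC-global}: after cubing, it reduces to $64\sqrt2>37+30\sqrt3$, which follows from $\sqrt2>7/5$ and $\sqrt3<7/4$.

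The only delicate point is the bookkeeping of Fedorov types. One must make sure that "not of truncated-octahedral type" corresponds exactly to some $\omega_\xi=0$, via the edge-complement bijection in Theorem~\ref{thm:affine-param}. One must also check that the Jo\'os--L\'angi equality case, a regular rhombic dodecahedron up to similarity, needs no further identification in the affine setting. Both follow directly from the stated results.
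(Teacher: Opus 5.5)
Your proposal is correct and follows the paper's proof: Jo\'os--L\'angi \cite[Theorem~3]{JoosLangi2023} handles the three- and four-generator strata, Theorem~\ref{thm:five-generator-affine} handles the five-generator stratum (where equality cannot occur), and the comparison $3\,2^{5/6}>F_{\BCC}$ is the one already verified in the proof of Theorem~\ref{thm:BCC-global}. A minor wording point: the Fedorov type is not determined by the generator count alone, since two types have four generators, but your argument only uses that the truncated-octahedral type is exactly the case of six active generators, which is correct.
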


\begin{proof}
The three- and four-generator strata follow from
\cite[Theorem~3]{JoosLangi2023}; Theorem~\ref{thm:five-generator-affine}
handles the remaining five-generator type.
\end{proof}

\begin{proof}[Proof of Theorem~\ref{thm:main}]
The sharp estimate for the non-truncated Fedorov types, together with its
equality case, is Corollary~\ref{cor:all-boundary-affine}.  It remains to
prove the global BCC bound.

By Theorem~\ref{thm:affine-param}, every three-dimensional parallelohedron is
represented by a connected nonnegative edge weighting $\omega$ of $K_4$ and
a free affine metric.  If all six graphical generators are active,
Theorem~\ref{thm:affine-positive-global} gives the BCC bound and its equality
classification.  If at least one generator is absent, the polytope belongs to
a non-truncated Fedorov stratum and Corollary~\ref{cor:all-boundary-affine}
gives
\[
 \Iso(P)\ge3\,2^{5/6}>F_{\BCC}.
\]
Thus the regular truncated octahedron is the unique global minimizer up to
similarity, and Theorem~\ref{thm:main} follows.
\end{proof}

\section{Open problems}
\label{sec:open-problems}

For a full-rank lattice $\Lambda\subset\mathbb R^n$, define
\[
 \Iso_n(\Vor(\Lambda))=
 \frac{\mathcal H^{n-1}(\partial\Vor(\Lambda))}
      {\mathcal H^{n}(\Vor(\Lambda))^{(n-1)/n}},
 \qquad
 \gamma_n=\inf_\Lambda\Iso_n(\Vor(\Lambda)).
\]
The planar problem is minimized by the hexagonal lattice
\cite{FejesToth1964}, and Theorem~\ref{thm:BCC-global} determines
$\gamma_3$.  Already in dimension
four, two special features of the present proof disappear.  Not every lattice
is of Voronoi's first kind, so there is no global nonnegative-conorm chart of
the type used here~\cite{McKilliamGrantClarkson2014}; and the combinatorial
stratification grows rapidly.  There are $52$ four-dimensional types of
parallelohedra~\cite{Engel1992}, while the classification of lattice
Voronoi polytopes in dimension five contains $110244$ affine
types~\cite{DutourSikiric2016}.

A simple comparison also shows that the sequence $A_n^*$ cannot be adopted
uncritically as the higher-dimensional answer.  The Voronoi cell of $D_4$ is
a regular $24$-cell~\cite[Chap.~4]{ConwaySloane1999}.  In the normalization
with edge length $1$ it has volume $2$ and $24$ regular-octahedral facets,
each of volume $\sqrt2/3$.  Hence
\[
 \Iso_4(\Vor(D_4))=\frac{24(\sqrt2/3)}{2^{3/4}}=2^{11/4}
 \approx6.727171322.
\]
The Voronoi cell of $A_4^*$ is, up to rescaling, the graphical zonotope of
$K_5$~\cite[Chap.~21]{ConwaySloane1999}.  In the root-lattice realization,
the matrix-tree formula gives volume $5^3\sqrt5=125\sqrt5$.  There are ten
facets of cut type $1|4$, each a $K_4$ graphical zonotope of volume $32$, and
twenty facets of type $2|3$, each the orthogonal product of a segment of
length $\sqrt2$ and a $K_3$ graphical zonotope of area $3\sqrt3$, hence of
volume $3\sqrt6$.  Thus
$\mathcal H^4(\Vor(A_4^*))=125\sqrt5$,
$\mathcal H^3(\partial\Vor(A_4^*))=320+60\sqrt6$,
and therefore
\[
 \Iso_4(\Vor(A_4^*))=
 \frac{320+60\sqrt6}{(125\sqrt5)^{3/4}}
 =\frac{4\,5^{3/8}}{25}(16+3\sqrt6)
 \approx6.831123656,
\]
hence $D_4$ is strictly better than $A_4^*$.

\begin{openproblem}[The four-dimensional case]
Determine $\gamma_4$ and classify all minimizers.  Is the regular $24$-cell,
that is the Voronoi cell of $D_4$, the unique minimizer up to similarity?
A complete proof would have to control all four-dimensional
combinatorial types together with their metric cones, or replace the
stratum-by-stratum analysis by a genuinely higher-dimensional invariant
inequality.
\end{openproblem}

\begin{openproblem}[Lattices of Voronoi's first kind]
For general $n$, restrict to lattices admitting an obtuse superbase.  Their
Voronoi-relevant vectors are indexed by the $2^n-1$ nontrivial cuts of
$K_{n+1}$, so a higher-rank analogue of the determinant and entropy framework
is available. Determine the minimizer of $\Iso_n$ in this
subclass.  In particular, for which $n$'s is $A_n^*$ optimal?
\end{openproblem}

\begin{openproblem}[Quantitative stability]
Find a distance on the scale-free moduli of three-dimensional parallelohedra and the sharp constant in a global estimate of the form
\[
 \Iso(P)-\Iso_\BCC
 \ge c\,\operatorname{dist}(P,\BCC)^2.
\]
\end{openproblem}

\begin{openproblem}[Beyond parallelohedra]
Theorem~\ref{thm:main} settles the isoperimetric problem
inside the class of three-dimensional parallelohedra.  Determine the
corresponding minimum among all equal-volume translative fundamental domains,
or among periodic equal-cell partitions (such a minimizer exists due to   the periodic existence
theory in~\cite{CesaroniNovaga2024,CesaroniNovaga2025}).
\end{openproblem}

\section*{Acknowledgements}
The authors are members of the Gruppo Nazionale per l'Analisi Matematica, la Probabilità e le loro Applicazioni (GNAMPA) of the Istituto Nazionale di Alta Matematica (INdAM).

\end{document}